\newcommand{\R}{\mathbb{R}}
\newcommand{\D}{\mathrm{D}}
\newcommand{\kM}{${\rm M}_p$}
\newcommand{\kC}{${\rm C}_p$}
\newcommand{\kS}{${\rm S}_p^k$}
\newcommand{\kD}{${\rm D}_p^\gamma$}
\newcommand{\dsum}{\displaystyle\sum}
\newcommand{\dmax}{\displaystyle\max}
\newcommand{\dmin}{\displaystyle\min}
\newcommand{\cone}{\rm cone}
\newcommand{\conv}{\rm conv}
\renewcommand{\;}{\quad}
\theoremstyle{thmstyleone}%
\newtheorem{theorem}{Theorem}
\newtheorem{proposition}[theorem]{Proposition}%
\newtheorem{lemma}[theorem]{Lemma}%
\theoremstyle{thmstyletwo}%
\newtheorem{remark}{Remark}%
\theoremstyle{thmstylethree}%
\newtheorem{definition}{Definition}%
\let\origmaketitle\maketitle
\def\maketitle{
	\begingroup
	\def\uppercasenonmath##1{} 
	\let\MakeUppercase\relax 
	\origmaketitle
	\endgroup
}
\begin{document}

\title[]{\huge On the Strength of Linear Relaxations in Ordered Optimization}

\author[V. Blanco, D. Laborda \MakeLowercase{and} M. Mart\'inez-Ant\'on]{
{\large V\'ictor Blanco$^{\dagger}$, Diego Laborda$^{\dagger}$, and Miguel Mart\'inez-Ant\'on$^{\dagger}$}\medskip\\
$^\dagger$Institute of Mathematics (IMAG), Universidad de Granada\\
\texttt{vblanco@ugr.es}, \texttt{dilagu@correo.ugr.es}, \texttt{mmanton@ugr.es}
}

\maketitle

\begin{abstract}
We study the conditions under which the convex relaxation of a mixed-integer linear programming formulation for ordered optimization problems, where sorting is part of the decision process, yields integral optimal solutions. Thereby solving the problem exactly in polynomial time. Our analysis identifies structural properties of the input data that influence the integrality of the relaxation. We show that incorporating ordered components introduces additional layers of combinatorial complexity that invalidate the exactness observed in classical (non-ordered) settings. In particular, for certain ordered problems such as the min--max case, the linear relaxation never recovers the integral solution. These results clarify the intrinsic hardness introduced by sorting and reveal that the strength of the relaxation depends critically on the ``proximity'' of the ordered problem to its classical counterpart: problems closer to the non-ordered case tend to admit tighter relaxations, while those further away exhibit substantially weaker behavior. Computational experiments on benchmark instances confirm the predictive value of the integrality conditions and demonstrate the practical implications of exact relaxations for ordered location problems.
\end{abstract}

\keywords{Convex relaxations, Mixed-integer linear programming, Ordered optimization, Clusterability.}

\section{Introduction}\label{sec:intro}

Convex relaxations have emerged as powerful tools for addressing a wide range of challenging optimization problems, either exactly or approximately. In mathematical optimization, the \emph{relax-and-round} paradigm has become a canonical framework: starting from an optimization problem defined over a complex, non-convex feasible set, one first relaxes this set to a larger convex region in which the problem becomes tractable, and then rounds the resulting optimal solution to a feasible point in the original domain, possibly by adding cutting planes or incorporating branching strategies in the process. Such relaxations play a twofold role: (i) they can be solved efficiently, providing a principled and computationally accessible starting point for the rounding step, and (ii) the optimal value of the relaxation yields a provable bound on the true optimum, thereby enabling formal performance guarantees for the overall algorithm. In many cases, the non-convexity of the feasible set arises from integrality requirements on the decision variables, so that the relaxed convex set corresponds to their \emph{continuous relaxation}. A classical example is the use of linear programming (LP) relaxations for mixed-integer linear programming (MILP).

In parallel, discrete optimization has established itself as a versatile and rigorous framework for designing decision-making tools that address increasingly complex challenges in logistics, supply chain management, and machine learning. Unlike traditional heuristic-based methods, discrete optimization enables the principled formulation of models that naturally incorporate structural constraints, operational requirements, and multiple competing objectives. This flexibility has proven especially valuable in solving real-world problems such as vehicle routing, facility location, inventory control, and network design, where efficiency and robustness are critical. It has also demonstrated great potential in machine learning and artificial intelligence, where optimization-based formulations have led to models that are accurate, fair, and explainable. By offering fine-grained control over decision processes, optimization-based approaches allow, for example, the explicit balancing of cost, service quality, and sustainability, or the enforcement of logical and structural rules dictated by geography, capacity, and timing. As a result, discrete optimization is playing an increasingly central role in advancing modern systems, making them more adaptive, efficient, and resilient to uncertainty.

However, although discrete optimization and, in particular, MILP provide a natural and expressive framework for modeling problems in different fields, they are inherently challenging to solve due to their combinatorial nature and NP-hardness. Whereas LP relaxations of these problems can be solved in polynomial time, offering a tractable alternative with strong duality and well-understood structural properties. This computational advantage is particularly appealing in applications where scalability and responsiveness are critical. By determining when LP relaxations yield integral optimal solutions, we may overcome some of the limitations associated with MILP, unlocking the potential to solve large-scale optimization problems with both theoretical guarantees and practical efficiency.

Among the decision problems where this analysis becomes particularly relevant is \emph{location science}, a branch of operations research concerned with determining the optimal placement of facilities to serve spatially distributed demands under efficiency or equity criteria. In this context, ordered aggregation operators have been proposed to evaluate and improve the quality of selected placements, offering a robust alternative to the classical average-based criteria commonly used in facility location models. However, incorporating such operators into the resulting optimization formulations substantially increases the computational complexity of the problem, posing new challenges for both exact and heuristic solution methods.

Although the LP relaxation may serve as a computationally efficient surrogate for these problems, it remains an open question whether and when this relaxation can, in fact, lead to \emph{exact recovery}, that is, yield the optimal solution of the underlying discrete optimization problem. In this work, we provide a rigorous characterization of this phenomenon within the family of discrete ordered median problems. Specifically, we introduce an \emph{ordered contribution function} based on primal-dual certificates that allows us to determine when the LP relaxation recovers the integrality of the solution. Building on this framework, we analyze two fundamental properties of the input data, the presence of \emph{strongly sortable solutions} and the absence of \emph{equidistant points}, and show how they influence the exactness of the relaxation. Moreover, we identify and study particular subclasses of ordered median problems for which we derive \emph{sufficient conditions for non-recovery}, enabling the detection of those instances whose LP relaxations are inherently weak. Complementing the theoretical analysis, we conduct an extensive empirical study on benchmark instances from facility location, evaluating in practice the quality of the LP relaxations for several problem variants. To the best of our knowledge, this is the first computational analysis that examines the impact of the \emph{clusterability} of the input data on the behavior of LP relaxations in location problems. Altogether, our results provide new theoretical and empirical insights into the interplay between combinatorial structure and convex relaxation, clarifying when LP-based approaches can be reliably employed, or not,  for ordered optimization problems.

\section{Related Literature}

In this section, we provide a brief description of prior work related to the tools and problems analyzed in this paper. Our study combines two main ingredients: convex relaxations of challenging mathematical optimization problems and ordered operators.

Given the relevance of ordered optimization problems in location science, we focus in this paper on this field, although several of our results might be adapted to other application domains.


In the context of operations research, making optimal decisions typically requires formulating and solving challenging mathematical optimization problems, commonly in the shape of MILP. State-of-the-art methods, as branch-and-bound methods, rely on solving LP relaxations enhanced by the addition of cutting planes and branching strategies. The efficiency of solving a MILP often depends critically on the tightness of its LP relaxation, that is, on how closely the relaxed feasible region approximates the convex hull of feasible integral solutions. In some special cases, as when the constraint matrix is totally unimodular, solving the LP relaxation is sufficient, as its solution coincides with the optimal integral one.

Since MILPs are NP-hard in general~\citep[see, e.g.,][]{papadimitriou1981complexity}, while LP relaxations can be solved in polynomial time~\citep{leonid1979polynomial}, such relaxations provide tractable approximations that guide the solution process in branch-and-bound and cutting-plane algorithms. From a geometric perspective, the LP relaxation reveals the structure of the underlying polyhedron, enabling the identification of tight facets and valid inequalities that are instrumental in strengthening formulations. Moreover, analyzing the gap between the LP relaxation and the optimal integral solution yields valuable insights into the problem's complexity and helps prioritize which constraints or variables to refine. Overall, LP relaxations serve both as a practical tool for scalable computation and as a theoretical lens to understand the polyhedral geometry underlying discrete optimization problems.

The relax-and-round approach has been widely applied to construct high-quality solutions for several optimization problems. Its success depends both on the strength of the convex relaxation and on the effectiveness of the rounding procedure. This strategy has been particularly useful in facility location. For example, \citet{shmoys1997approximation} propose a rounding scheme for the uncapacitated facility location problem based on its LP relaxation. 

A common strategy to improve relaxations is to strengthen a \emph{basic} formulation by means of valid inequalities that tighten the feasible region toward the convex hull of integral solutions. For the $p$-median problem, polyhedral studies have identified families of valid inequalities, including cover, rank, and Chv\'atal--Gomory inequalities~\citep[see, e.g.,][among many others]{avella2007computational}. For the $p$-center problem, \citet{elloumi2004new} proposed a semi-relaxation framework, where further relaxation of certain variables can lead to tighter bounds. In the case of ordered median location problems, \citet{martinez2023constraint} proposed a constraint-relaxation approach where so-called \emph{strong order constraints} are added in a branch-and-cut fashion, demonstrating how carefully designed relaxations can drive the solution process.

An alternative but related line of research is based on \emph{Lagrangian relaxations}. Here, difficult constraints are relaxed and incorporated into the objective function with associated multipliers, which are iteratively updated until a termination criterion is reached. The resulting relaxed problems are easier to solve, and their solutions can be used to construct feasible solutions to the original problem. The tighter the relaxation, the better the resulting solutions. This approach has been successfully applied to different facility location problems~\citep[see, e.g.,][]{senne2000lagrangean,an2017lp}.

Beyond linear relaxations, one of the most successful types of convex relaxation-based algorithms includes semidefinite programming (SDP) relaxations. Over the last years, much work has been done to understand the phenomenon of integrality recovery in convex relaxation methods, both LP and SDP relaxations, above all in data science, such as clustering problems. Recent LP relaxations that achieve an integral solution to
clustering problems include \cite{awasthi2015relax,de2022ratio}, while some SDP relaxations that achieve
integral solutions are \cite{li2020birds,ames2014convex,awasthi2015relax}, among others.

In summary, the quality of a convex relaxation directly impacts the efficiency of algorithms for MILP, particularly in location problems, and the scalability of their application to large-scale instances. Of special interest is the identification of conditions on the input data under which the relaxation is \emph{exact}, that is, its optimal solution is already integral. Understanding these conditions not only yields structural insights into challenging optimization problems but also guides the design of specialized algorithms that can bypass expensive branching or cutting-plane procedures. This motivates our study of such exactness conditions, with the goal of bridging the gap between theoretical characterizations and algorithmic performance in practice. Moreover, we empirically investigate how the \emph{clusterability} of the input data influences the quality of LP relaxations in facility location problems. This analysis is inspired by recent probabilistic approaches in clustering, where the likelihood of integrality recovery has been studied under random geometric models with data points generated within Euclidean balls of known centers and radii~\citep[see, e.g.,][]{awasthi2015relax,r:del2023k}. Here, we aim to uncover, empirically, how geometric structure and spatial cohesion in the data affect the tightness of LP relaxations and their ability to recover integral solutions for different ordered median problems.


The other ingredient considered in this paper is ordered optimization. The ordered weighted averaging (OWA) operator, introduced by ~\cite{Yager1988}, is a flexible aggregation mechanism that generalizes many classical tendency measures in statistics. Given a finite set of real values, the OWA operator reorders them in non-increasing order and then applies a weighted average, where the weights are assigned not to the original positions of the values but to their ranks. This reordering step distinguishes OWA from standard weighted averages and enables it to model a wide range of preference attitudes. Depending on the choice of the weight vector, the OWA operator can reproduce the minimum, maximum, range, median, quantiles, or arithmetic mean, among others, making it a powerful and adaptive tool in multi-criteria decision analysis, robust optimization, and aggregation in machine learning and data science. 

One of the most popular uses of ordered operators is found in location science, the so-called \emph{ordered location problems}, as a unified methodology to cast different cost-based objective functions~\citep[see, e.g.,][]{g:puerto2000geometrical,nickel2005location,cont:blanco2014revisiting,blanco2016continuous,Labbe2017,Marin2020, blanco2023fairness,Ljubic2024}. Other fields where ordered  aggregations have been successfully applied include: voting problems~\citep{Ponce2018}, portfolio selection~\citep{Cesarone2024}, network design \citep{Puerto2015OrderedMH}, or linear regression~\citep{Blanco2021}. Nevertheless, the increasing interest in guiding decision-making tools through fair and explainable solutions has resulted in several works analyzing the convenience of this framework in different fields. For instance, the aggregation of residuals of linear regression models using ordered operators has given rise to the computation of robust estimators for these models in the presence of outliers~\citep[see, e.g.,][]{Yager2009, Blanco2021,Puerto2024}.

\subsection*{Contribution and Organization}

In this paper, we advance the study of integrality recovery from the linear programming relaxation within a family of challenging ordered location problems. Our starting point is the classical $p$-median problem, for which the structural conditions ensuring tightness of the LP relaxation have been extensively investigated in recent work~\citep[see][]{awasthi2015relax,r:del2023k}, leading to a comprehensive understanding of its combinatorial properties. We show, however, that such favorable behavior does not extend to ordered variants. In problems such as the $p$-center, the $p$-$k$-sum, or the $p$-$\gamma$-centdian, the additional sorting components introduce new layers of combinatorial complexity that break the exactness of the LP relaxation observed in the $p$-median case. Our results clarify the intrinsic difficulty of these ordered settings and highlight that the quality of the LP relaxation is strongly influenced by the ``proximity'' of a problem to the median operator: problems closer to the $p$-median tend to exhibit tighter relaxations, whereas those further away display substantially weaker behavior. 

We first analyze the theoretical foundations of these conclusions and subsequently conduct an extensive computational study that not only provides empirical evidence supporting our results in practice but also highlights new open questions for further research on the topic. In particular, following the line of previous research on the $p$-median problem, we examine the impact of structured instances, such as their \emph{clusterability} (i.e., the extent to which the input admits a clustered structure), on the strength of the relaxations. Although theoretical properties have been derived for clustering algorithms based on the (extended) stochastic ball model~\citep{r:del2023k}, our theoretical results indicate that the recovery property is very restrictive for ordered location problems, and for many choices of sorting weights, it will never hold. Hence, our empirical study sheds light on the impact that favorable geometric distributions of points may have on the quality of the LP relaxation of the problem.

The remainder of the paper is organized as follows. Section~\ref{sec:domp} introduces in detail the family of ordered location problems studied here. We also recall the two main mixed-integer linear programming formulations from the literature for solving these problems, namely those proposed in \citep{cont:blanco2014revisiting,Ogryczak2003}, which will serve as our oracles for the subsequent analysis on the tightness of their relaxations. In Section~\ref{sec:primal-dual}, we present the LP relaxation for ordered problems as well as its dual. We define the ordered contribution function (Definition~\ref{def:ocf}), which can be interpreted as the overall contribution that a point receives from all others. Using this function, we characterize the exactness of the LP relaxation by means of the optimality conditions of the primal--dual pair (Theorem~\ref{th:ocf}), and we subsequently exploit this characterization for structured instances. Section~\ref{sec:specialcases} is devoted to special cases of interest that have been widely studied in the literature, culminating in Theorem~\ref{th:specialcases}, which highlights how sorting affects the exactness of the linear relaxation within this family of location problems. Section~\ref{sec:exps} provides empirical evidence supporting our findings through a wide range of computational experiments on benchmark instances, illustrating the actual effect of sorting on the tightness of the linear relaxations. In particular, we empirically address the following questions: is there a monotone relationship between the spatial configuration of the input data and computational effort? Do more structured instances lead to faster solutions or tighter relaxations? Which ordered problems are positively affected by the input structure? Does sorting matter for the quality of the LP relaxations?

Finally, Section~\ref{sec:conclusion} presents our conclusions and outlines directions for future research on the topic.

\section{The Discrete Ordered Median Problem}\label{sec:domp}

Let $P= \{a_1, \ldots, a_m\} \subset \R^n$ a finite set of points in the $n$-dimensional real space. We denote by $[m] := \{1,\ldots, m\}$ the index sets for the points in $P$. We are also given a metric $\D: P\times P\to \R_+$, completely determined by the matrix $(d_{ab})\in \R^{P\times P}_+$, where $d_{ab}:=\D(a,b)$. The use of natural indices may be used as appropriate, being $d_{ij}:=d_{a_i a_j}$.

The goal of the discrete $p$-ordered median problem (DOMP, for  short) for the set of points in $P$ is to select a set of centers $Q \subseteq P$ with $|Q|=p$  optimizing a measure on the quality of those centers, usually an aggregation of the distances from the points to their closest center in $Q$. Thus, for a given subset $Q \subseteq P$, and each $j \in [m]$, we denote by $d_j := \dmin_{c\in Q}d_{a_j c}$, i.e., the closest distance from $a_j$ to the points in $Q$. 

With this notation, in what follows, we define the key measure that will be used in our paper.

\begin{definition}[Ordered Median Operator]
    Let $\bm{\lambda} = (\lambda_1, \ldots, \lambda_m)\in \R^m$ and $Q\subseteq P$. The \emph{ordered median} (OM) \emph{operator} on the cost vector $(d_j)_{j \in [m]}$ is defined as:
\begin{equation}\label{eq:om}\tag{\rm OM}
    {\rm OM}_{\bm{\lambda}}(Q):=\dsum_{j=1}^m\lambda_j d_{\sigma(j)}.
\end{equation}
where $\sigma \in \mathscr{S}_m$ is a permutation of the indices of the points in $P$, such that $d_{\sigma(j)}\geq d_{\sigma(j+1)}$ for $j\in [m-1]$. 
\end{definition}

The goal of a $p$-DOMP problem is to find a subset $Q \subseteq P$ with $|Q| = p$  (the $p$ centers) that minimizes an ordered median \eqref{eq:om} aggregation of the distances $(d_j)_{j \in [m]}$, using a given weight vector $\bm{\lambda}$.

\begin{equation}\label{eq:domp}\tag{$p$-{\rm DOMP}}
    \min_{\substack{Q \subseteq P \\ |Q| = p}} \ref{eq:om}_{\bm{\lambda}}(Q).
\end{equation}

While particular instances of this problem have been studied in the literature, we address it in its general form, presenting a unified framework that enables decision-making under different values of the hyperparameter $\bm{\lambda}$, which should be selected based on the criteria, the priorities, and the characteristics that shape the decision, the decision maker, and the input dataset.

The use of OM operators in location science introduces a powerful and flexible mechanism to balance between different objectives. By specifying a non-increasing weight vector $\bm\lambda$, one can smoothly interpolate between the minimization of the maximum distance (as in $p$‑center), the sum of distances (as in $p$‑median), or any intermediate compromise. This flexibility allows practitioners to control the degree of robustness against anomalous points: placing more emphasis on larger distances yields a solution closer to $p$‑center, while a flatter weight distribution mimics the behavior of $p$‑median \cite[see, e.g.,][for illustrative examples of the different solutions that can be obtained with different choices of $\bm\lambda$]{disc:blanco2019ordered}. Moreover, when the variables are relaxed to convex domains, choosing a non-increasing $\bm\lambda$ ensures that the resulting optimization problem remains convex and tractable under metric-based cost vectors. Consequently, OM‑based location offers a unified, interpretable, and computationally efficient framework to adapt locational behaviors to the specific characteristics and needs of the users.

The most \emph{natural} mathematical optimization approach for the problem is to consider sets of binary variables: $(y)$ for the selection of centers and $(x)$ for the assignment and sorting the distances,

\adjustbox{width=\textwidth}{\begin{tabular}{cc}
$
   y_j = \begin{cases}
    1 & \mbox{if $a_j$ is selected as center},\\
    0 & \mbox{otherwise}
\end{cases} 
$
&  $
    x_{ijr} = \begin{cases}
        1 & \mbox{if $a_i$ is assigned to center $a_j$ and $d_i$}\\
        & \mbox{is sorted in $r$th position},\\
        0 & \mbox{otherwise}
    \end{cases}
$
\end{tabular}}
for $i, j, r \in [m]$.

Being the following one a valid formulation for the problem~\citep[see, e.g.,][for further details]{disc:boland2006exact}:
\begin{align*}
    \min & \; \sum_{r=1}^m \lambda_r \sum_{i=1}^m\sum_{j=1}^m d_{ij} x_{ijr} \\
    \mbox{s.t.} & \; \sum_{j=1}^m\sum_{r=1}^m x_{ijr} =1, & \forall i \in [m];\\
    & \; \sum_{i=1}^m\sum_{j=1}^m x_{ijr} =1, & \forall r \in [m];\\
    & \; \sum_{r=1}^m x_{ijr} \leq y_j, & \forall i,j \in [m];\\
    & \; \sum_{j=1}^m y_j  = p, \\
    & \; \sum_{i=1}^m\sum_{j=1}^m d_{ij} x_{ijr}  \geq \sum_{i=1}^m\sum_{j=1}^m d_{ij} x_{ij(r+1)}, & \forall r \in [m-1];\\
    & \; y_j \in \{0,1\}, & \forall  j\in [m];\\
    & \; x_{ijr} \in \{0,1\}, & \forall i,j, r \in [m].
\end{align*}
\cite{disc:boland2006exact} proposed alternative formulations and strengthening techniques for this problem that have been applied to other types of problems where sorting distances is part of the decision~\citep{blanco2016continuous,disc:blanco2019ordered}.

We assume in this paper that the ${\bm \lambda}$-weights are nonnegative, whereas the convexity of the ordered median operator is assured without such an assumption. Nevertheless, when this operator is inserted into the minimization model, unless one explicitly requires that each point is allocated to its closest center, the optimal solution would allocate the points that are sorted in the position/s of the negative $\lambda$'s to their farthest center to obtain smaller (negative) values of the objective function. This might be solved by incorporating closest-assignment constraints, as those proposed by \cite{espejo2012closest} (see Remark \ref{closest}).

Furthermore, although in the $p$-median the optimization problem that results when fixing the centers in $Q$ is known to be convex, it is not always the case for \ref{eq:domp}, as we prove in the following result.
\begin{lemma}
    \ref{eq:domp} is convex if and only if $\lambda_1 \geq \cdots \geq \lambda_m$.
\end{lemma}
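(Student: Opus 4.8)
The plan is to reduce the statement to a convexity property of the aggregation map itself. Fixing $Q$ turns \ref{eq:domp} into the problem of evaluating, and in the relaxed formulation optimizing over the assignments that produce, the ordered median of the cost vector, so everything hinges on the function $f:\R^m\to\R$ given by $f(z)=\dsum_{j=1}^m\lambda_j z_{\sigma(j)}$, where $\sigma$ sorts $z$ non-increasingly. I would first argue that convexity of the continuous/relaxed problem obtained for fixed centers is equivalent to convexity of $f$ on $\R^m$, and then prove that $f$ is convex if and only if $\lambda_1\ge\cdots\ge\lambda_m$. This is the genuine mathematical content; once $f$ is shown convex and non-decreasing, composing with the (convex) metric-based cost vectors, as in the continuous-location setting referenced above, preserves convexity.

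For sufficiency I would use the rearrangement inequality to obtain the variational representation
\[
f(z)=\max_{\pi\in\mathscr{S}_m}\ \dsum_{j=1}^m\lambda_j z_{\pi(j)}.
\]
When $\lambda_1\ge\cdots\ge\lambda_m$, the inner sum is maximized precisely by the permutation listing the entries of $z$ in non-increasing order, which returns $f(z)$ exactly. Each inner expression is a fixed linear functional of $z$, so $f$ is a pointwise maximum of finitely many affine functions and is therefore convex (and, since $\bm\lambda\ge 0$, non-decreasing in each argument).

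For necessity I would proceed by contraposition: assuming some $\lambda_k<\lambda_{k+1}$, I would exhibit a violation of the midpoint inequality. Choosing $T$ large, I set coordinates $1,\dots,k-1$ equal to $T$ and coordinates $k+2,\dots,m$ equal to $-T$, so that these entries occupy the top $k-1$ and bottom $m-k-1$ sorted positions throughout; then I take $z$ with $(z_k,z_{k+1})=(1,0)$ and $z'$ with $(z_k,z_{k+1})=(0,1)$. The locked coordinates contribute an identical constant to $f(z)$, $f(z')$ and $f\big(\tfrac{z+z'}{2}\big)$, so the convexity inequality collapses to the two-variable kernel $g(u,v)=\lambda_k\dmax(u,v)+\lambda_{k+1}\dmin(u,v)$. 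Evaluating gives $g(1,0)=g(0,1)=\lambda_k$ while $g\big(\tfrac12,\tfrac12\big)=\tfrac{\lambda_k+\lambda_{k+1}}{2}$, and $\tfrac{\lambda_k+\lambda_{k+1}}{2}>\lambda_k$ breaks midpoint convexity, establishing the contrapositive.

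The main obstacle, and the only place requiring care, is this necessity construction: one must verify that the auxiliary coordinates keep their sorted ranks along the entire segment, for which $T>1$ suffices, so that $f$ restricted to the relevant affine slice is genuinely the kernel $g$ up to an additive constant. A secondary point is to state the reduction cleanly, namely that convexity of the relaxed problem for fixed $Q$ is exactly convexity of $f$, after which the sorted-weighted-sum argument does all the work.
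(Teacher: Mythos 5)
Your proof is correct in both directions, and it is instructive to see where it meets and departs from the paper's argument. Your necessity construction is, at bottom, the same counterexample as the paper's: the paper takes $\bm{c}=(1,\dots,1,1,0,\dots,0)$ and $\bm{d}=(1,\dots,1,0,1,0,\dots,0)$, which are exactly your $z$ and $z'$ with the top block at $T=1$ and the bottom block at $0$ instead of $-T$, and computes ${\rm OM}_{\bm\lambda}(\bm{c}+\bm{d})-{\rm OM}_{\bm\lambda}(\bm{c})-{\rm OM}_{\bm\lambda}(\bm{d})=\lambda_{s+1}-\lambda_s>0$, i.e.\ a failure of subadditivity, and then invokes positive homogeneity together with Rockafellar's theorem (a positively homogeneous function is convex iff subadditive) to conclude non-convexity. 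You instead exhibit a direct violation of midpoint convexity; this is more elementary, since it needs no appeal to homogeneity or to Rockafellar, at the price of the rank-locking bookkeeping with $T>1$ (which you correctly identify and handle). The two proofs genuinely diverge on sufficiency: the paper writes ${\rm OM}_{\bm\lambda}=\sum_r \Delta_r\theta_r$ with $\Delta_r=\lambda_r-\lambda_{r+1}\geq 0$ and $\theta_r$ the sublinear sum of the $r$ largest entries, so convexity follows from nonnegative combinations of sublinear functions; you use the max-over-permutations representation, i.e.\ a pointwise maximum of finitely many linear functionals, which the paper only states afterwards as~\eqref{eq:convexom} (quoted from the literature) and does not use inside this proof. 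Both routes are standard; yours is arguably more self-contained, while the paper's telescoping decomposition is the one that later underlies the $k$-sum formulation of Ogryczak--Tamir. One small refinement to yours: the locked value $-T$ takes you outside the nonnegative orthant where cost vectors actually live; either replace $-T$ by $0$ (ties in the sort do not change the value of $f$, as the paper's own choice shows) or shift the whole configuration by $T\mathbf{1}$, using $f(z+c\mathbf{1})=f(z)+c\sum_j\lambda_j$, so that the counterexample lives on genuine (nonnegative) cost vectors.
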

\begin{proof} 
Denoting by $\theta_r(\bm{d}) = \sum_{\ell=1}^r d_{\sigma(\ell)}$, for $r\in [m]$, and $\theta_m({\bm d})=\sum_{\ell=1}^m d_{\ell}$; the ordered median operator \eqref{eq:om} can be equivalently written as:
    \begin{equation*}
        {\rm OM}_{\bm \lambda}({\bm d}) = \sum_{r=1}^m \Delta_r \theta_r({\bm d})
    \end{equation*}
    where $\Delta_r = \lambda_r - \lambda_{r+1}$ for $r\in [m-1]$, and $\Delta_m = \lambda_m$. Note that $\theta$-functions are sublinear ($\theta_m$ is a linear function). Now, since $\lambda_1 \geq \cdots \geq \lambda_m$, then $\Delta_r \geq 0$ for all $r\in [m-1]$ and ${\rm OM}_{\bm \lambda}$ is a nonnegative linear combination of sublinear functions plus a linear function, so it is sublinear and then convex.

    Conversely, suppose $\lambda_s < \lambda_{s+1}$ for some $s \in [m]$. Then, we could consider the cost vectors
    \begin{equation*}
        {\bm c} = (\overbrace{1,\dots,1}^{s-1},1,0,\overbrace{0,\dots,0}^{ m-s-1}) \quad \text{and} \quad {\bm d} = (\overbrace{1,\dots,1}^{s-1},0,1,\overbrace{0,\dots,0}^{ m-s-1});
        \end{equation*}
    then
    \begin{equation*}
        {\rm OM}_{\bm \lambda}({\bm c}+{\bm d}) = {\rm OM}_{\bm \lambda}({\bm c}) + {\rm OM}_{\bm \lambda}({\bm d}) + \lambda_{s+1} - \lambda_s > {\rm OM}_{\bm \lambda}({\bm c}) + {\rm OM}_{\bm \lambda}({\bm d}).
    \end{equation*}
Thus, ${\rm OM}_{\bm \lambda}$ is not subadditive, and then, since it is positively homogeneous, it cannot be convex~\cite[Theorem 4.7]{rockafellar1970convex}.
\end{proof}

Thus, from now on, we assume that the weights $\bm{\lambda}$ are sorted in non-increasing order, to assure convexity of the relaxed problem. In this case, alternative formulations have been derived to handle this situation more efficiently, avoiding the need to incorporate binary variables for representing the permutations for sorting the distances in the mathematical optimization model~\citep[see, e.g.,][]{cont:blanco2014revisiting,Ogryczak2003}. In particular, in \citep{cont:blanco2014revisiting}, the key observation is that in case the weights are sorted in non-increasing order, the OM operator for a fixed set of centers $Q$ is equivalent to
\begin{equation}\label{eq:convexom}
    \ref{eq:om}_{\bm{\lambda}}(Q)=\dmax_{\sigma\in \mathscr{S}_m} \dsum_{r=1}^m\lambda_r d_{\sigma(r)}
\end{equation}
where $\mathscr{S}_m$ is the group of permutations of the index set $[m]$.

This representation allows for the following formulation of the problem that, apart from the $y$-variables defined above, uses the classical two-index assignment variables:
$$
z_{ij} = \begin{cases}
    1 & \mbox{if $a_i$ is assigned to center $a_j$,}\\
    0  & \mbox{otherwise}
\end{cases}
$$
as well as two sets of continuous variables $u_i, v_r \in \R$ for $i, r \in [m].$
\begin{lemma}[\cite{cont:blanco2014revisiting}]\label{lemma:bep}
Let $\bm{\lambda}$ such that $\lambda_1 \geq \cdots \geq \lambda_m \geq 0$. Then, \ref{eq:domp} can be solved with the following mixed-integer linear programming problem:
\begin{align}\label{eq:milp-bep}\tag*{\ensuremath{p\text{-}{\rm DOMP}^{\bm{\lambda}}_{\rm BEP}(\D)}}
		\min & \; \sum_{i=1}^m u_i + \sum_{r=1}^m v_r & \\
			\mbox{s.t.} & \; \sum_{j=1}^m z_{ij} = 1,& \forall i\in [m];  \label{ctr:1}\\
			    & \; z_{ij} \leq y_j, & \forall i, j \in [m]; \label{ctr:2}\\
			    & \; \sum_{j=1}^m y_j = p, \label{ctr:3} \\
			    & \; u_i + v_r \geq \lambda_r \sum_{j=1}^m z_{ij} d_{ij}, & \forall i, r \in [m]; \label{ctr:4} \\
		          & \; y_j, z_{ij} \in \{ 0,1 \}, & \forall i, j \in [m]; \label{domp:integrality}\\
                & \; u_i ,v_r \in \R, & \forall i, r\in [m]. \nonumber
	\end{align}
\end{lemma}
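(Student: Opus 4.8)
The plan is to show that the mixed-integer program \ref{eq:milp-bep} is a correct formulation of \ref{eq:domp} by verifying that, for any feasible choice of centers encoded by the binary variables $(y,z)$, the optimal values of the auxiliary continuous variables $(u,v)$ exactly reproduce the ordered median value \ref{eq:om} evaluated at the induced cost vector. The essential ingredient is the reformulation \eqref{eq:convexom}, which expresses the OM operator with non-increasing weights as a maximum over all permutations $\sigma \in \mathscr{S}_m$ of the linear objective $\sum_{r=1}^m \lambda_r d_{\sigma(r)}$. Since this is a maximum of a linear functional over the permutation polytope (the Birkhoff polytope), strong linear programming duality will convert it into a minimization problem whose structure matches the constraints \eqref{ctr:4}. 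I would first deal with the combinatorial layer (that the $(y,z)$ variables correctly select $p$ centers and assign each point to exactly one of them) and then, separately, with the continuous layer (that $(u,v)$ correctly aggregate the assignment costs through the OM operator).

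First I would argue that constraints \eqref{ctr:1}--\eqref{ctr:3} together with the integrality \eqref{domp:integrality} put the binary variables in bijection with feasible center sets $Q \subseteq P$ of cardinality $p$ and assignments of each point $a_i$ to a selected center. For a fixed integral $(y,z)$, define $c_i := \sum_{j=1}^m z_{ij} d_{ij}$, the distance from $a_i$ to its assigned center; note that because the objective is increasing in the $u_i$ and because smaller $c_i$ yields a less restrictive constraint \eqref{ctr:4}, in any optimal solution each point is assigned to its \emph{closest} selected center, so that $c_i = d_i = \min_{c \in Q} d_{a_i c}$ as required by the definition of the problem. This is the point where one must be slightly careful: the formulation does not explicitly enforce closest assignment, so I would show that the minimization pressure on the objective does it automatically once the $\lambda$-weights are nonnegative.

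Next, with the cost vector $\bm{c} = (c_i)_{i \in [m]}$ fixed, I would show that the inner minimization over $(u,v)$ subject to \eqref{ctr:4} equals exactly $\mathrm{OM}_{\bm\lambda}(\bm{c})$. Writing the constraints as $u_i + v_r \geq \lambda_r c_i$ for all $i, r \in [m]$, minimizing $\sum_i u_i + \sum_r v_r$ is precisely the dual of the optimal assignment (transportation) problem $\max_{\sigma \in \mathscr{S}_m} \sum_r \lambda_r c_{\sigma(r)}$, since the coefficient matrix $(\lambda_r c_i)_{i,r}$ is a rank-one product. By the LP duality for the assignment problem, and invoking the representation \eqref{eq:convexom}, this common optimal value equals $\mathrm{OM}_{\bm\lambda}(\bm{c})$. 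The key structural observation making \eqref{ctr:4} tight is that the optimal matching pairs large $c_i$ with large $\lambda_r$, which is exactly the sorting encoded by \eqref{eq:convexom}.

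The main obstacle I anticipate is the clean identification of the $(u,v)$-subproblem with the dual of the permutation maximization, and in particular confirming that the minimum over $(u,v)$ is attained and equals the maximum in \eqref{eq:convexom} rather than merely bounding it; here one must verify complementary slackness or exhibit explicit optimal dual variables (for instance, $v_r = \lambda_r d_{\sigma(r)} - u_{\sigma(r)}$ for a suitable sorting permutation $\sigma$) to certify tightness. Once both layers are in place, combining them shows that minimizing over all feasible $(y,z,u,v)$ reproduces exactly $\min_{|Q|=p} \mathrm{OM}_{\bm\lambda}(Q)$, which is \ref{eq:domp}, completing the proof.
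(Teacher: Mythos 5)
Your proposal is correct and follows essentially the route the paper intends: the paper states this lemma as a cited result of \citet{cont:blanco2014revisiting} rather than reproving it, and the proof in that reference is exactly the argument you give, namely that for fixed integral $(y,z)$ the $(u,v)$-subproblem $\min\{\sum_i u_i+\sum_r v_r : u_i+v_r\geq \lambda_r c_i\}$ is the LP dual of maximizing $\sum_{i,r}\lambda_r c_i x_{ir}$ over the Birkhoff polytope, which by integrality of that polytope and the representation \eqref{eq:convexom} equals ${\rm OM}_{\bm\lambda}$ of the induced cost vector. One small correction: your claim that in \emph{any} optimal solution each point is assigned to its closest open center is too strong (e.g.\ with $p$-center weights $\bm\lambda=(1,0,\ldots,0)$, a point can be assigned to a farther center without changing the ordered objective as long as its distance stays below the maximum); what the argument needs, and what your monotonicity reasoning actually delivers, is that \emph{some} optimal solution uses closest assignments, so the optimal values of \ref{eq:milp-bep} and \ref{eq:domp} coincide.
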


We denote by:
\begin{equation*}
    \mathcal{P}_{\rm IP} := \{(y, z, u, v) \in \{0,1\}^{m} \times \{0,1\}^{m\times m} \times \R^{m} \times \R^m: (y, z, u, v) \text{ verifies } \eqref{ctr:1}\text{-}\eqref{ctr:4} \}
\end{equation*} 
the feasible region of the problem above.

\begin{remark}[Closest Assignment]\label{closest}
  As already mentioned, we assumed the nonnegativity of the ${\bm{\lambda}}$-weights, since in that case there is an optimal solution for the problem where the points are allocated to their closest open centers. If we skip this assumption, one can ensure this condition by imposing any of the available closest-assignment constraints in location science. Among those of size $O(m^2)$, the one that dominates others was proposed by  \cite{espejo2012closest}, where for all $i, j \in [m]$ the closest-assignment constraint reads for \ref{eq:milp-bep}:
\begin{align*}
\sum_{\substack{j'\in [m]:\\ d_{ij'} < d_{ij} \\ |\Theta_{ij}| - |\Theta_{ij'}| \leq p}}\!\!\!\!\!\!Q_{ijj'} z_{ij'}
+ P_{ij} \!\!\sum_{\substack{j'\in[m]:\\ d_{ij'} \geq d_{ij}}}\!\!\!\! z_{ij'}
+ \sum_{\substack{j'\in [m]:\\ d_{ij'} < d_{ij}}}\!\! y_{j'}
\leq P_{ij},
\end{align*}
where $\Theta_{ij} = \{\,l\in [m]: d_{il} < d_{ij}\,\}$, $P_{ij} = \min \{p, |\Theta_{ij}|\}$, and $Q_{ijj'} = |\Theta_{ij'}| + \min\{0, p - |\Theta_{ij}|\}$, for all $i, j, j' \in [m]$.
\end{remark}

On the other hand, although the computational complexity of solving the MILP formulation above remains challenging, it allows for a reduction in the number of binary variables compared to the general formulation. The main goal of this paper is to analyze how tight the linear programming relaxation of the problem is under certain conditions on the point-to-center assignments and the $\bm{\lambda}$-weights. Specifically, we are interested in understanding when relaxing the integrality constraints \eqref{domp:integrality} leads to an integral solution of the problem, and hence to an optimal solution of \ref{eq:domp}, which could then be obtained in polynomial time by solving a linear program instead of the more complex MILP. Consequently, we use the LP relaxation of \ref{eq:milp-bep} as a polynomial-time oracle for solving \ref{eq:domp}. 

An alternative MILP model for the problem was presented by \cite{Ogryczak2003} based on the $k$-sum approach:
\begin{align}
    \min & \; \sum_{s=1}^m \Delta_s \left(s t_s + \sum_{i=1}^m q_{is}\right) \label{eq:milp-ot}\tag*{\ensuremath{p\text{-}{\rm DOMP}^{\bm{\lambda}}_{\rm OT}(\D)}}\\
    \mbox{s.t.} & \; \sum_{j=1}^m z_{ij} = 1, & \forall i\in [m]; \nonumber \\
			    & \; z_{ij} \leq y_j, & \forall i, j \in [m]; \nonumber\\
			    & \; \sum_{j=1}^m y_j = p, &\nonumber \\
                & \; q_{is} \geq \sum_{i=1}^m d_{ij} z_{ij} - t_s, &\forall i, s \in [m];\nonumber\\
                & \; t_s \in \R, &\forall s \in [m];\nonumber\\
                & \; q_{is} \in \R, &\forall i, s \in [m];\nonumber\\
                & \; y_j, z_{ij} \in \{ 0,1 \}, & \forall i, j \in [m]; \nonumber
\end{align}
where $\Delta_s = (\lambda_{s}-\lambda_{s+1})$ if $s <m$ and $\Delta_m = \lambda_m$.

The following result, proved by \cite{Marin2020}, states that the LP relaxations of both models are equivalent.
\begin{lemma}
    The objective values of the linear programming relaxations of \ref{eq:milp-bep} and \ref{eq:milp-ot} coincide.
\end{lemma}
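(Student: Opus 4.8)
The plan is to show that, for every fixed feasible $(y,z)$ of the shared relaxed assignment--location polytope, the two inner continuous subproblems collapse to the \emph{same} value, namely the ordered median ${\rm OM}_{\bm\lambda}(\bm c)$ of the induced cost vector $\bm c = \bm c(z)$ with $c_i := \sum_{j=1}^m d_{ij} z_{ij}$. Since the constraints \eqref{ctr:1}--\eqref{ctr:3} (relaxed to $z_{ij},y_j\in[0,1]$) are identical in both models, and the auxiliary variables $u,v$ of \ref{eq:milp-bep} and $t,q$ of \ref{eq:milp-ot} are coupled to the rest only through $\bm c(z)$, both LP optima can be written as $\min_{(y,z)}{\rm OM}_{\bm\lambda}(\bm c(z))$ over the \emph{common} polytope, whence they coincide. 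So the whole argument reduces to evaluating the two inner problems.

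First I would treat \ref{eq:milp-bep}. For fixed $\bm c$, the inner problem $\min\{\sum_i u_i+\sum_r v_r : u_i+v_r\ge\lambda_r c_i,\ \forall i,r\}$ is precisely the LP dual of the assignment (transportation) problem $\max\{\sum_{i,r}\lambda_r c_i\, x_{ir} : \sum_r x_{ir}=1,\ \sum_i x_{ir}=1,\ x\ge 0\}$. The assignment polytope is integral, so its optimum is attained at a permutation matrix and equals $\max_{\sigma\in\mathscr{S}_m}\sum_{r=1}^m \lambda_r c_{\sigma(r)}$; by strong LP duality the inner minimum equals this quantity, which is exactly ${\rm OM}_{\bm\lambda}(\bm c)$ via the representation \eqref{eq:convexom}.

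Next I would treat \ref{eq:milp-ot}. Here the inner objective $\sum_s\Delta_s\bigl(s\,t_s+\sum_i q_{is}\bigr)$ decouples across the index $s$, since no constraint links variables of different $s$. For each $s$, minimizing $s\,t_s+\sum_i q_{is}$ over $t_s$ and the nonnegative deviations $q_{is}\ge\max\{0,\,c_i-t_s\}$ is the standard $k$-sum linear-programming representation, whose value is $\theta_s(\bm c)=\sum_{\ell=1}^s c_{\sigma(\ell)}$, the sum of the $s$ largest components of $\bm c$. Hence the inner optimum equals $\sum_{s=1}^m \Delta_s\,\theta_s(\bm c)$, and invoking the telescoping identity ${\rm OM}_{\bm\lambda}(\bm c)=\sum_{r=1}^m \Delta_r\,\theta_r(\bm c)$ already established in the proof of the convexity lemma, this is again ${\rm OM}_{\bm\lambda}(\bm c)$.

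The main technical point to be careful about is this reduction of each inner subproblem to the ordered median of $\bm c(z)$: for \ref{eq:milp-bep} it rests on LP duality together with the integrality of the assignment polytope, while for \ref{eq:milp-ot} it rests on the $k$-sum minimization identity and the decoupling across $s$. Both require $\lambda_1\ge\cdots\ge\lambda_m\ge 0$, which guarantees $\Delta_s\ge 0$ (so that the $\theta$-combination reproduces ${\rm OM}_{\bm\lambda}$ and the inner minimizations are bounded below). Once both inner values are identified with the single function ${\rm OM}_{\bm\lambda}(\bm c(z))$ over the common relaxed $(y,z)$-polytope, equality of the two LP optima is immediate, and no rounding or vertex-enumeration argument is needed.
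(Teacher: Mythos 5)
Your proof is correct, and it is worth noting that the paper itself does not prove this lemma at all: it simply cites Mar\'in et al.\ (2020). Your argument is therefore a self-contained derivation, and it is sound. Projecting both relaxations onto the common $(y,z)$-polytope is legitimate because the constraints \eqref{ctr:1}--\eqref{ctr:3} are identical in the two models and the auxiliary variables interact with $(y,z)$ only through $c_i(z)=\sum_{j=1}^m d_{ij}z_{ij}$; the inner BEP problem is indeed the LP dual of the assignment problem with profits $\lambda_r c_i$, so integrality of the assignment polytope plus strong duality gives the value $\max_{\sigma\in\mathscr{S}_m}\sum_r \lambda_r c_{\sigma(r)}$, which is \eqref{eq:convexom}; and the inner OT problem decouples over $s$ into Ogryczak--Tamir blocks whose values are $\Delta_s\theta_s(\bm c)$, summing to ${\rm OM}_{\bm\lambda}(\bm c)$ by the telescoping identity already used in the paper's convexity lemma. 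Two points you should make explicit. First, as printed in the paper the OT model declares $q_{is}\in\R$ free, under which its relaxation is unbounded below (set $q_{is}=c_i-t_s$ and let $t_s\to+\infty$ for any $s<m$ with $\Delta_s>0$); your reading $q_{is}\geq 0$, together with the corrected constraint $q_{is}\geq \sum_{j=1}^m d_{ij}z_{ij}-t_s$, is the intended formulation, and your proof silently relies on this fix. Second, both reductions require $\Delta_s\geq 0$, i.e.\ $\lambda_1\geq\cdots\geq\lambda_m\geq 0$, which you correctly flag and which is the paper's standing assumption. Finally, your argument actually yields slightly more than the stated lemma: both relaxations reduce to $\min_{(y,z)}{\rm OM}_{\bm\lambda}(\bm c(z))$ over the same polytope, so they share not only the optimal value but also the set of optimal $(y,z)$, which substantiates the paper's subsequent remark that a relaxed solution of one model can be converted into one of the other.
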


The above result implies that the polyhedra of solutions induced by the LP relaxations of both problems are equivalent, in the sense that from a relaxed solution of one problem, one can construct a feasible solution of the other. Thus, the \emph{relaxation properties} of both oracles for solving the problem are equivalent.

Note that having a tight convex relaxation of a MILP model significantly improves the efficiency of solving the problem numerically. A tight relaxation provides a stronger bound on the optimal value, which can reduce the size of the branch-and-bound tree and accelerate convergence to an optimal integral solution. It also helps in pruning suboptimal regions of the search space more effectively and may even lead to integral solutions directly from the relaxation, eliminating the need for combinatorial search in some cases. Thus, analyzing the conditions when the MILP can be solved using LP also allows us to take a step forward in the design of algorithms for efficiently solving these challenging models.

\section{Integrality Recovery in the DOMP}\label{sec:primal-dual}

In this section, we analyze the theoretical conditions that ensure integrality recovery in~\ref{eq:domp} based on the linear relaxation of~\ref{eq:milp-bep}. First, we study the general conditions for recovery using primal--dual certificates. Then, under additional assumptions, we establish more concise results that, in turn, allow us to derive negative results regarding recovery.

\subsection{Primal--Dual Certificates: The Ordered Contribution}

To derive the theoretical certificates for integrality recovery in the \ref{eq:domp}, we denote by 
$$
    \mathcal{P}_{\mathrm{LP}} := \{(y, z, u, v) \in [0,1]^{m} \times  [0,1]^{m\times m} \times \R^{m} \times \R^m: (y, z, u, v) \text{ verifies } \eqref{ctr:1}\text{-}\eqref{ctr:4}\}
$$
the convex-relaxation of the feasible region of ~\ref{eq:milp-bep}, i.e. $\mathcal{P}_{\mathrm{IP}} = \mathcal{P}_{\mathrm{LP}} \cap \{0,1\}^{m} \times  \{0,1\}^{m\times m} \times \R^{m} \times \R^m$.

We consider then the LP relaxation of \ref{eq:milp-bep} that we use as our relaxation oracle:
\begin{align}
    \min & \; \sum_{i=1}^m u_i + \sum_{r=1}^m v_r \label{eq:lp-bep}\tag*{\ensuremath{p\text{-}{\rm DOMP}^{\bm{\lambda}}_{\rm LP}(\D)}}\\
    \mbox{s.t.} & \; (y,z,u,v) \in \mathcal{P}_{\mathrm{LP}} \nonumber.
\end{align}

    \begin{definition}[Integrality Recovery in DOMP]
\ref{eq:domp} with input data $(P,\bm{\lambda},p)$ is said to be LP \emph{recovered} if \ref{eq:lp-bep} 
admits an optimal solution $(y^\star,z^\star,u^\star,v^\star) \in \mathcal{P}_{\rm IP}$.
\end{definition}

Our guarantees for the LP to recover \ref{eq:domp} will be based on the primal-dual optimality conditions. It is easy to derive the dual of \ref{eq:lp-bep}:
\begin{align}\label{eq:lp-d}\tag*{\ensuremath{p\text{-}{\rm DOMP}^{\bm{\lambda}}_{\rm LD}(\D)}}
    \max & \; \sum_{i=1}^m \alpha_i - p \omega \nonumber\\
    \mbox{s.t.} & \; \alpha_i \leq \beta_{ij} + \sum_{r=1}^m \lambda_r \sigma_{ir} d_{ij}, & \forall i, j \in [m]; \nonumber\\
        & \; \sum_{i=1}^m \beta_{ij} \leq \omega, & \forall j\in [m]; \nonumber\\
        & \; \sum_{i=1}^m \sigma_{ir} = 1, & \forall r\in [m]; \label{eq:tum1}\\
        & \; \sum_{r=1}^m \sigma_{ir} = 1, & \forall i\in [m]; \label{eq:tu2}\\
        & \; \beta_{ij}, \sigma_{ir} \geq 0, & \forall i, j, r\in [m]; \nonumber\\
        & \; \alpha_i, \omega \in \R, & \forall i\in [m].\nonumber
\end{align}

Following the line of~\cite{awasthi2015relax} and~\cite{r:del2023k}, and as a generalization of the so-called \emph{contribution function}, we introduce the \emph{ordered contribution function}, defined as follows:
\begin{definition}[Ordered Contribution Function]\label{def:ocf}
    Let $\alpha \in \R^m$ and $\sigma \in \R^{m\times m}$ be a permutation matrix. The \emph{ordered contribution function}, $C_\sigma^\alpha : [m] \to \R_+$, is defined by
    \begin{equation}\label{eq:ocf}
        C_\sigma^\alpha(j) := \dsum_{i=1}^m \left( \alpha_i - \sum_{r=1}^m \lambda_r \sigma_{ir} d_{ij} \right)_+
    \end{equation}
    where $t_+$ stands for the positive part of a real $t$, i.e., $t_+ := \max\left\{t,0\right\}$. 
\end{definition}

In the above definition, a demand point $a_j \in P$ receives a positive \emph{ordered contribution} from those points $a_i \in P$ that are at a weighted distance ($\lambda_r d_{ij}$) smaller than $\alpha_i$, when $i$ is sorted by $\sigma$ in the $r$th position. In this sense, a point $a_i$ can only \emph{see} other points within a weighted distance $\alpha_i$. The values of 
$$
\beta_{ij} = \left( \alpha_i - \sum_{r=1}^m \lambda_r \sigma_{ir} d_{ij} \right)_+
$$
can thus be interpreted as the ordered contribution from $a_i$ to $a_j$. According to this observation, the ordered contribution function~\eqref{eq:ocf} represents the overall contribution that a point $a_j \in P$ receives from all points in $P$ in the ordered setting.

In what follows, we provide conditions for exact LP recovery in \ref{eq:milp-bep}. The characterization we present below recodes the optimality conditions of the primal-dual problem via the ordered contribution function~\eqref{eq:ocf}, allowing for an interpretation of how a given distribution of the input data affects the tightness of the LP relaxation.

\begin{theorem} \label{th:ocf}
    Let $(\bar{y}, \bar{z}, u, v)$ be a feasible solution to \ref{eq:milp-bep}, and $Q:=\{j\in [m] : \bar{y}_j=1\}$. For each $j\in Q$, let $\Gamma_j:=\{i\in [m]: \bar{z}_{ij}=1\}$. If  $(\bar{y}, \bar{z}, u, v)$ is optimal solution to \ref{eq:lp-bep} then there exists $\alpha \in \R^m$ and $\sigma \in \R^{m \times m}$ such that
    \begin{align}
		& C_\sigma^\alpha(j) = C_\sigma^\alpha(j'), & \forall j, j'\in Q; \label{eq:th1} \\
		& C_\sigma^\alpha(i) \leq C_\sigma^\alpha(j), & \forall i \in [m] \setminus Q, j\in Q; \label{eq:th2} \\
		& \alpha_i \geq \sum_{r=1}^m \lambda_r \sigma_{ir} d_{ij}, & \forall j\in Q, i \in \Gamma_j; \label{eq:th3} \\
		& \alpha_i \leq \sum_{r=1}^m \lambda_r \sigma_{ir} d_{ij}, & \forall j\in Q, i \in [m] \setminus \Gamma_j. \label{eq:th4}
	\end{align}

    Conversely, if $\sigma$ is a permutation verifying $d_{\sigma(r)}\geq d_{\sigma(r+1)}$ for $r\in [m-1]$, and there exists $\alpha\in \R^m$ satisfying \eqref{eq:th1}-\eqref{eq:th4}. Then, there exist $\tilde{u}, \tilde{v}$ such that $(\bar{y}, \bar{z}, \tilde{u}, \tilde{v})$ is optimal solution to \ref{eq:milp-bep}.

    Besides, whether inequalities \eqref{eq:th2}-\eqref{eq:th4} are satisfied strictly, every optimal solution $(y',z',u',v')$ to \ref{eq:lp-bep} fulfills $y'=\bar{y}$ and $z'=\bar{z}$.
\end{theorem}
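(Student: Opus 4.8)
The plan is to prove the uniqueness claim as a \emph{strict complementary slackness} argument between an arbitrary primal optimum of \ref{eq:lp-bep} and the particular dual optimum furnished by the converse direction. Concretely, given $\alpha$ and $\sigma$ satisfying \eqref{eq:th1}--\eqref{eq:th4}, I first assemble the dual vector $(\alpha,\sigma,\beta,\omega)$ for \ref{eq:lp-d} by setting
\[
\beta_{ij}=\Big(\alpha_i-\sum_{r=1}^m\lambda_r\sigma_{ir}d_{ij}\Big)_+,
\qquad
\omega=\max_{j\in[m]}C_\sigma^\alpha(j).
\]
By \eqref{eq:th1} the value $C_\sigma^\alpha(j)$ is constant on $Q$, and by \eqref{eq:th2} it is attained there, so $\omega=C_\sigma^\alpha(j)$ for every $j\in Q$. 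The converse part already guarantees that this vector is dual feasible and that, paired with $(\bar y,\bar z,\tilde u,\tilde v)$, it attains a common objective value, hence it is optimal for \ref{eq:lp-d}. The whole argument rests on the identity $\sum_i\beta_{ij}=C_\sigma^\alpha(j)$, so that the dual constraint $\sum_i\beta_{ij}\le\omega$ indexed by $j$ is tight precisely when $C_\sigma^\alpha(j)=\omega$.

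With this dual optimum fixed, let $(y',z',u',v')$ be any optimal solution of \ref{eq:lp-bep}. Being optimal for a primal--dual pair, the two solutions must satisfy complementary slackness, which I would read off constraint by constraint. Strictness of \eqref{eq:th2} gives $C_\sigma^\alpha(j)<\omega$ for $j\notin Q$, i.e.\ the dual constraint attached to the primal variable $y_j$ is slack; complementary slackness then forces $y'_j=0$ for all $j\notin Q$. Strictness of \eqref{eq:th4} gives $\alpha_i<\sum_r\lambda_r\sigma_{ir}d_{ij}$ for $i\notin\Gamma_j$, $j\in Q$, whence $\beta_{ij}=0$ and the dual constraint $\alpha_i\le\beta_{ij}+\sum_r\lambda_r\sigma_{ir}d_{ij}$ attached to $z_{ij}$ is slack, forcing $z'_{ij}=0$. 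Finally, strictness of \eqref{eq:th3} yields $\beta_{ij}>0$ for $i\in\Gamma_j$, $j\in Q$, so the associated primal constraint \eqref{ctr:2} must be active, giving $z'_{ij}=y'_j$.

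It remains to combine these equalities with primal feasibility. From $y'_j=0$ on $[m]\setminus Q$ together with $\sum_j y'_j=p=|Q|$ and $y'_j\le 1$, each of the $|Q|$ remaining coordinates must hit its upper bound, so $y'_j=1$ for $j\in Q$ and hence $y'=\bar y$. For the assignment variables, $z'_{ij}\le y'_j=0$ kills every entry with $j\notin Q$; strict \eqref{eq:th4} kills the entries with $i\notin\Gamma_j$, $j\in Q$; and strict \eqref{eq:th3} gives $z'_{ij}=y'_j=1$ on the remaining entries $i\in\Gamma_j$, $j\in Q$. Since $(\bar y,\bar z)$ is integral and feasible, each $i$ lies in exactly one $\Gamma_j$, so these prescriptions reproduce $\bar z$ entrywise and yield $z'=\bar z$.

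The step I expect to require the most care is the complementary-slackness bookkeeping of the first two paragraphs: matching each strict inequality to the correct primal--dual pair and pinning down the sign conventions of \ref{eq:lp-d} (in particular the role of $\omega$ and the identity $\sum_i\beta_{ij}=C_\sigma^\alpha(j)$), so that ``strict \eqref{eq:th2}'' translates into slackness of the $y_j$-constraint while ``strict \eqref{eq:th3}'' and ``strict \eqref{eq:th4}'' pin down the $z_{ij}$ values. Once this dictionary is fixed, the remaining deductions follow directly from feasibility of $(y',z')$.
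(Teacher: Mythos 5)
Your treatment of the final (``Besides'') claim is correct, and it is essentially the paper's own argument for that part: build the dual point with $\beta_{ij}=\bigl(\alpha_i-\sum_{r=1}^m\lambda_r\sigma_{ir}d_{ij}\bigr)_+$ and $\omega=C_\sigma^\alpha(j)$ for $j\in Q$, pair it with an arbitrary primal optimum $(y',z',u',v')$, and use complementary slackness: strict \eqref{eq:th2} makes the constraint $\sum_i\beta_{ij}\le\omega$ slack for $j\notin Q$, forcing $y'_j=0$ via \eqref{eq:cs3}, and strict \eqref{eq:th4} makes $\alpha_i\le\beta_{ij}+\sum_r\lambda_r\sigma_{ir}d_{ij}$ slack for $i\notin\Gamma_j$, forcing $z'_{ij}=0$ via \eqref{eq:cs2}. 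The paper closes by invoking the assignment constraint \eqref{ctr:1} (each $i$ must be assigned somewhere, and all other entries are already zero), while you close via strict \eqref{eq:th3}, $\beta_{ij}>0$ and \eqref{eq:cs1}; both closings are valid.

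The genuine gap is that this is only one of the theorem's three assertions. You never address the forward implication (that LP optimality of $(\bar y,\bar z,u,v)$ yields $\alpha,\sigma$ satisfying \eqref{eq:th1}--\eqref{eq:th4}, which the paper obtains by writing the complementary slackness conditions \eqref{eq:cs1}--\eqref{eq:cs4}, specializing them to an integral primal point, and replacing $\beta$ by its positive-part form), and you take the converse as given rather than proving it. Moreover, what you actually need from ``the converse part'' is stronger than its stated conclusion. The converse only asserts that $(\bar y,\bar z,\tilde u,\tilde v)$ is optimal for the integer problem \ref{eq:milp-bep}; by itself this does not imply that your constructed dual point is optimal for \ref{eq:lp-d} --- an integrality gap would break exactly this step. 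What makes your argument run is that the certificate and $(\bar y,\bar z,\tilde u,\tilde v)$ have equal primal and dual objective values, i.e.\ that the LP relaxation is tight at this solution; establishing that is precisely the content of the converse's proof (constructing $\tilde u,\tilde v$ via Lemma~\ref{lemma:bep} so that \eqref{eq:cs4} holds, checking dual feasibility of $(\alpha,\beta,\omega,\sigma)$, and verifying \eqref{eq:condition1}--\eqref{eq:condition3}). Until you supply the forward direction and this converse construction, the proposal is an incomplete proof of the statement: sound where it engages, but resting on the very fact it is supposed to establish.
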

\begin{proof}
First, observe that if $(y, z, u, v)$ and $(\alpha, \beta, \omega, \sigma)$ are feasible solutions of \ref{eq:lp-bep} and \ref{eq:lp-d}, respectively, by  complementary slackness theorem, both solutions are optimal to their respective problems if and only if they verify:
\begin{align}
	& \beta_{ij} (z_{ij} - y_j) = 0, & \forall i, j \in [m];\label{eq:cs1} \\
	& z_{ij} \left(\alpha_i - \beta_{ij} - \sum_{r=1}^m \lambda_r \sigma_{ir} d_{ij}\right) = 0, & \forall i, j \in [m]; \label{eq:cs2} \\
	& y_j \left( \sum_{i=1}^m \beta_{ij} - \omega \right) = 0, & \forall j \in [m]; \label{eq:cs3} \\
	& \sigma_{ir} \left( u_i + v_r - \lambda_r \sum_{j=1}^m z_{ij} d_{ij} \right) = 0, & \forall i, r \in [m]. \label{eq:cs4}
\end{align}

In case $(\bar{y}, \bar{z}, u, v)$ is solution to \ref{eq:milp-bep}, Conditions~\eqref{eq:cs1}-\eqref{eq:cs3}\footnote{Note that, if $\sigma$ takes binary values, when sorting the distance vector induced by $\bar{y}$ and $\bar{z}$, one can always take $u$ and $v$ such that $u_i+v_r=\lambda_rd_{ij}$ where $a_i$ is assigned to $a_j$ and $d_i$ ($=d_{ij}$) is sorted in $r$th position (Lemma~\ref{lemma:bep}). Therefore, \eqref{eq:cs4} holds too. This fact motivates the notion of \emph{strongly sortable solutions} introduced below.} simplify to:
\begin{align}
	& \beta_{ij} = 0, & \forall i, j \in [m] \text{ such that } y_j = 1, z_{ij} = 0; \label{eq:condition1} \\
	& \beta_{ij} = \alpha_i - \sum_{r=1}^m \lambda_r \sigma_{ir} d_{ij}, & \forall i, j \in [m] \text{ such that } z_{ij} = 1; \label{eq:condition2} \\
	& \sum_{i=1}^m \beta_{ij} = \omega, & \forall j \in [m] \text{ such that } y_j = 1.\label{eq:condition3}
\end{align}

Note that if $(\alpha,\beta,\omega,\sigma)$ is a feasible solution of \ref{eq:lp-d}, since $\beta_{ij}$ does not appear in the objective function, one can replace its value by  $\beta'_{ij} = \left( \alpha_i - \sum_{r=1}^m \lambda_r \sigma_{ir} d_{ij} \right)_+$, for all $i, j \in [m]$.

With the above observations, we are now ready to prove the result. Let us then assume that $(\bar{y}, \bar{z}, u, v)$ is an optimal solution to \ref{eq:lp-bep}, and  $(\alpha, \beta, \omega, \sigma)$ its dual solution. We denote by $\beta'_{ij}:=\left( \alpha_i - \sum_{r=1}^m \lambda_r \sigma_{ir} d_{ij} \right)_+$ for every $i, j\in [m]$. By definition, $\sum_{i=1}^m\beta'_{ij}=C^{\alpha}_\sigma(j)$ for all $j\in [m]$. Hence, \eqref{eq:condition3} implies $C_\sigma^\alpha(j) = \sum_{i=1}^m \beta'_{ij} = \omega$ for all $j\in Q$, so the first condition \eqref{eq:th1} is fulfilled. The second constraint of \ref{eq:lp-d} ensures $C_\sigma^\alpha(i) = \sum_{t=1}^m \beta'_{ti} \leq \omega = C_\sigma^\alpha(j)$, for all $i\in [m]$ and $j\in Q$, therefore \eqref{eq:th2} is verified. Equation \eqref{eq:condition2} implies that $\alpha_i - \sum_{r=1}^m \lambda_r \sigma_{ir} d_{ij} = \beta'_{ij}$ if $i \in \Gamma_j$. Since $\beta'_{ij} \geq 0$, we also have that $\alpha_i \geq \sum_{r=1}^m \lambda_r \sigma_{ir} d_{ij}$, thus \eqref{eq:th3} is satisfied. The first constraint of  \ref{eq:lp-d} assures $\alpha_i \leq \beta'_{ij} + \sum_{r=1}^m \lambda_r \sigma_{ir} d_{ij}$ for every $i \in [m]$ and $j\in Q$. Thus, if $i \notin \Gamma_j$, Equation \eqref{eq:condition1} implies $\beta'_{ij} = 0$, hence $\alpha_i \leq \sum_{r=1}^m \lambda_r \sigma_{ir} d_{ij}$ and the last condition \eqref{eq:th4} becomes true.

Now we will see the converse. Let $\sigma$ be a permutation matrix in the hypothesis of the theorem, $\beta_{ij}:= \left( \alpha_i - \sum_{r=1}^m  \lambda_r \sigma_{ir} d_{ij} \right)_+$, and $\omega := C_\sigma^\alpha(j)$ for $j\in Q$. By construction, $(\alpha,\beta,\omega,\sigma)$ is a feasible solution to~\ref{eq:lp-d}. By hypothesis, $\sigma$ is an optimal solution to \eqref{eq:convexom}, hence by Lemma~\ref{lemma:bep} there is $(\tilde{u}, \tilde{v})$ able to satisfy \eqref{ctr:4} and \eqref{eq:cs4} for $\sigma$ and $\bar{z}$. The last condition \eqref{eq:th4} says that if $\bar{y}_j=1$ and $\bar{z}_{ij}=0$, then we have $\alpha_i - \sum_{r=1}^m \lambda_r \sigma_{ir} d_{ij} \leq 0$ and by definition $\beta_{ij}=0$, therefore \eqref{eq:condition1} is verified. Condition \eqref{eq:th3} says that, if $\bar{z}_{ij} = 1$, then $\alpha_i - \sum_{r=1}^m \lambda_r \sigma_{ir} d_{ij} \geq 0$, thus, by definition $\beta_{ij} = \alpha_i - \sum_{r=1}^m \lambda_r \sigma_{ir} d_{ij}$ and \eqref{eq:condition2} is given. Straightforwardly, Equation \eqref{eq:condition3} is fulfilled since $C_\sigma^\alpha(j) = \sum_{i=1}^m \beta_{ij} = \omega$ fo every $j\in Q$.

We suppose now conditions \eqref{eq:th2}-\eqref{eq:th4} are satisfied strictly. Let $(y',z',u',v')$ an optimal solution to \ref{eq:lp-bep}. We take $(\alpha,\beta,\omega,\sigma)$ an optimal solution to \ref{eq:lp-d}. Let $i\in [m]\setminus Q$, we have that $C^\alpha_\sigma(i) = \sum_{t=1}^m \beta_{ti} < \omega$ because of \eqref{eq:th2} and hypothesis, therefore $\sum_{t=1}^m \beta_{ti} - \omega \neq 0$ for all $i \in [m] \setminus Q$. By Condition \eqref{eq:condition3}, $y'_i=0$, hence $z'_{ti} = 0$ for all $i\in [m]\setminus Q$ and $t \in [m]$. Let $j\in Q$ and $i\in [m]\setminus \Gamma_j$. According to \eqref{eq:th4}, $\alpha_i < \sum_{r=1}^m \lambda_r \sigma_{ir} d_{ij}$, for being $(\alpha,\beta,\omega,\sigma)$ feasible, $\beta_{ij} \geq 0$. Hence, $\alpha_i - \beta_{ij} - \sum_{r=1}^m \lambda_r \sigma_{ir} d_{ij} \neq 0$ so  $z'_{ij} = 0$ by \eqref{eq:condition2}. Finally, with all of this it is clear that $y'_j = 1$ and $z'_{ij}=1$ for all $j\in Q$ and $i\in \Gamma_j$, resulting in $y' = \bar{y}$ y $z' = \bar{z}$ as was desired.
\end{proof}
Following the intuition behind the ordered contribution function and the above result, problem~\ref{eq:domp} will be LP-recovered if we can choose feasible dual variables $\alpha \in \mathbb{R}^m$ and $\sigma \in \mathbb{R}^{m \times m}$ satisfying:
\begin{enumerate}
    \item the overall ordered contribution to each center is the same, 
    \item the overall ordered contribution to any non-center point is smaller than that to a center point, and 
    \item each point \emph{sees} exactly its own center, i.e., $\alpha_i > \sum_{r=1}^m \lambda_r \sigma_{ir} d_{ij}$ if and only if $a_i$ is assigned to $a_j$.
\end{enumerate}
The following straightforward observation allows us to extract conditions for the simpler $1$-facility problem:
\begin{remark}[Single Center]
    Notice that in the case $p=1$, conditions~\eqref{eq:th1} and~\eqref{eq:th4} do not apply. Hence, if $j \in [m]$ is the only center, the conditions of Theorem~\ref{th:ocf} reduce to:
    \begin{align}
        & C_\sigma^\alpha(i) \leq C_\sigma^\alpha(j), && \forall i \in [m] \setminus \{j\}, \label{eq:singleth1}\\
        & \alpha_i \geq \sum_{r=1}^m \lambda_r \sigma_{ir} d_{ij}, && \forall i \in [m]. \label{eq:singleth2}
    \end{align}
\end{remark}

\subsection{Strongly Sortable Solutions}

Even though we interpret $\sigma$ as a permutation matrix that sorts the distance vector to the centers, the condition for recovering integrality stated in Theorem~\ref{th:ocf} refers to the existence of a matrix $\sigma$ that may, nevertheless, take non-integer values. In such a case, one may recover the location–allocation decision but not the ordering of distances. Below, we introduce some definitions and results that allow us to recover optimal values for the entire set of decision variables in the DOMP problem.
\begin{definition}[Strongly Sortable Solutions]
    Let $(\bar{y}, \bar{z}, u, v)$ be a mixed-integer optimal solution to \ref{eq:lp-bep}. This solution is said to be \emph{strongly sortable} if the dual problem~\ref{eq:lp-d} admits an optimal solution $(\alpha,\beta,\omega,\bar{\sigma})$ where $\bar{\sigma}$ takes binary values. In such a case $\bar{\sigma}$ can be seen as a permutation in $\mathscr{S}_m$ which satisfies $d_{\bar\sigma(r)}\geq d_{\bar\sigma(r+1)}$ for $r\in [m-1]$.
\end{definition}

The following result shows that, given a strongly sortable solution to \ref{eq:domp}, any permutation that sorts the distance vector in non-increasing order can be recovered by the dual LP relaxation \ref{eq:lp-d}.

\begin{lemma}\label{lemma:perm}
	Let $(\bar{y},\bar{z})$ be a strongly sortable solution to \ref{eq:lp-bep} such that $(\alpha,\beta,\omega,\bar{\sigma})$ is an optimal solution to \ref{eq:lp-d} where $\bar{\sigma}\in \mathscr{S}_m$. Let $\sigma \in \mathscr{S}_m$ any permutation fulfilling $d_{\sigma(r)}\geq d_{\sigma(r+1)}$ for $r\in [m-1]$. Then, there exists $\alpha' \in \R^m$ such that $(\alpha',\beta,\omega,\sigma)$ is an optimal solution to \ref{eq:lp-d}.
\end{lemma}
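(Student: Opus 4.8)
We have a strongly sortable solution $(\bar y,\bar z)$ with optimal dual $(\alpha,\beta,\omega,\bar\sigma)$, where $\bar\sigma\in\mathscr S_m$ is a permutation that sorts distances in non-increasing order. Given *any other* such sorting permutation $\sigma$, we want to produce a matching $\alpha'$ so that $(\alpha',\beta,\omega,\sigma)$ is again dual-optimal. The key structural fact is that $\bar\sigma$ and $\sigma$ differ only by permuting indices that share a common distance value: if $d_{\bar\sigma(r)}\ge d_{\bar\sigma(r+1)}$ and $d_{\sigma(r)}\ge d_{\sigma(r+1)}$ for all $r$, then the two permutations can disagree only within blocks of ties.

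**The plan.** I would proceed as follows.

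First, I would extract from the excerpt that $\beta$ and $\omega$ do not appear in the constraints that involve $\sigma$ except through the first dual constraint $\alpha_i\le\beta_{ij}+\sum_r\lambda_r\sigma_{ir}d_{ij}$, and that the objective $\sum_i\alpha_i-p\omega$ depends on $\sigma$ only indirectly. So the task reduces to: keeping $\beta,\omega$ fixed, find $\alpha'$ with the same objective value (to preserve optimality) that is dual-feasible under $\sigma$. Since the dual objective is $\sum_i\alpha_i-p\omega$, preserving optimality means I must keep $\sum_i\alpha'_i=\sum_i\alpha_i$.

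Second, the natural definition is the positional one: for each $i$, let $r=\bar\sigma^{-1}(i)$ be the rank of $i$ under $\bar\sigma$, and let $i'=\sigma(r)$ be the index sitting in the same rank under the new permutation; then set $\alpha'$ so that the contribution pattern is transported along these ties. More concretely, since the two permutations differ only on tie-blocks, within each block the multiset of weighted distances $\{\lambda_r d_{ij}\}$ seen is identical regardless of which index occupies which position in the block — because $d_{\sigma(r)}=d_{\bar\sigma(r)}$ whenever they differ. I would define $\alpha'_i:=\alpha_{\pi(i)}$ where $\pi$ is the permutation of indices induced by composing $\bar\sigma$ with $\sigma^{-1}$ restricted to tie-blocks, so that $\sum_i\alpha'_i=\sum_i\alpha_i$ is automatic.

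Third, I would verify feasibility of $(\alpha',\beta,\omega,\sigma)$: the two row/column-stochastic constraints \eqref{eq:tum1}--\eqref{eq:tu2} hold since $\sigma\in\mathscr S_m$ is a genuine permutation matrix; the constraint $\sum_i\beta_{ij}\le\omega$ is untouched since $\beta,\omega$ are unchanged; and the first dual constraint must be rechecked, using that for a tie-block $\lambda_rd_{ij}$ is unchanged under the relabeling because distances agree on ties. Finally I would confirm $\sum_i\alpha'_i-p\omega=\sum_i\alpha_i-p\omega$, giving optimality by strong duality / the fact that the original value was optimal.

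**Main obstacle.** The delicate point is that $\lambda_r$ is attached to *rank* $r$, not to index $i$, so when I swap two tied indices between positions I must ensure the weighted quantities $\lambda_r\sigma_{ir}d_{ij}$ remain consistent. The saving grace is that a swap only occurs between positions $r,r'$ with $d_{\bar\sigma(r)}=d_{\bar\sigma(r')}$, but the $\lambda$-weights at those positions may differ; so I must check carefully that the transport of $\alpha$ is compatible with the change of attached weight. The cleanest route is to argue that on a maximal tie-block the value $\sum_{r}\lambda_r\sigma_{ir}d_{ij}$ seen by index $i$ equals $\lambda_{\text{rank of }i\text{ under }\sigma}\cdot d_{ij}$, and then define $\alpha'_i$ to match the weighted distance at $i$'s new rank exactly as $\alpha$ matched it at the old rank; feasibility of the first constraint then transfers verbatim. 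Establishing this rank-transport identity on tie-blocks, rather than the bookkeeping around it, is where the real work lies.
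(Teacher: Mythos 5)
Your setup is right (keep $\beta,\omega$ fixed, modify only $\alpha$, note that the two permutations can disagree only inside blocks of tied distances), but your concrete construction $\alpha'_i:=\alpha_{\pi(i)}$ is wrong, and the failure is precisely at the point you flag as the ``main obstacle'' and then defer. With $\beta$ frozen, complementary slackness \eqref{eq:condition2} for the pair $(i,i^\star)$, where $a_{i^\star}$ is the center serving $a_i$, forces $\beta_{ii^\star}=\alpha'_i-\lambda_{r'}d_{ii^\star}$, with $r'$ the rank of $i$ under the new permutation $\sigma$. Substituting your $\alpha'_i=\alpha_j$ (where $j=\pi(i)$ is the tied point that formerly occupied rank $r'$) and comparing with the original slackness condition for $j$ yields $\beta_{ii^\star}=\beta_{jj^\star}$: your transport is consistent only if tied points happen to receive identical contributions from their respective centers, which nothing guarantees. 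The cleanest way to see the failure is the $p$-median weight vector $\bm{\lambda}=(1,\dots,1)$: there $\sum_r\lambda_r\sigma_{ir}d_{ij}=d_{ij}$ is independent of $\sigma$, so the only $\alpha'$ compatible with the given $\beta$ is $\alpha'=\alpha$; your swap instead replaces $\alpha_i$ by $\alpha_j$, and since the dual constraint at $(i,i^\star)$ is tight (by \eqref{eq:condition2}), the swapped point is already dual-infeasible the moment $\alpha_j>\alpha_i$, which a given optimal dual is free to satisfy. In short, permuting the $\alpha$-values secures the objective $\sum_i\alpha_i-p\omega$ (the cheap part) but destroys feasibility and slackness (the part that matters), because $\alpha_i$ encodes point-specific geometric information, not rank-specific information.

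The paper's construction is the one your closing sentence gestures at but your formal definition contradicts: an \emph{additive correction}, not a transport of values. For a transposition of tied points $i,j$ (old ranks $r,r'$, new ranks $r',r$) one sets $\alpha'_i:=\alpha_i+(\lambda_{r'}-\lambda_r)d_{ii^\star}$ and $\alpha'_j:=\alpha_j+(\lambda_r-\lambda_{r'})d_{jj^\star}$, leaving all other coordinates untouched. This keeps every $\beta_{ii^\star}=\alpha'_i-\lambda_{r'}d_{ii^\star}$ literally unchanged, so conditions \eqref{eq:condition1}--\eqref{eq:condition3} survive verbatim; and because the tie means $d_{ii^\star}=d_{jj^\star}$, the two corrections cancel, so $\sum_i\alpha'_i=\sum_i\alpha_i$ holds as well --- the objective preservation you wanted comes for free from the tie, not from permuting values. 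The general case then follows by writing $\sigma\bar{\sigma}^{-1}$ as a product of such order-respecting transpositions. So the real work is not a ``rank-transport identity'' for $\alpha$; it is the observation that the quantity to be transported unchanged is the slack $\beta_{ii^\star}$, with $\alpha_i$ adjusted additively to absorb the change of weight at its new rank.
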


\begin{proof}
    If $d_{\bar{\sigma}(r)}> d_{\bar{\sigma}(r+1)}$ for $r\in [m-1]$ except in the centers (whose associated distance is zero), then the permutation $\bar{\sigma}$ is unique, but symmetry in centers indices. Let us say there are points $i,j\in [m]$ such that $d_i=d_j$. We consider the permutation $\sigma\in \mathscr{S}_m$ such that $\sigma(r) := \bar \sigma(r), \forall r \neq i,j$, $\sigma(i) := \bar{\sigma}(j)$, and $\sigma(j) := \bar{\sigma}(i)$. This is, $\sigma$ is equal to $\bar{\sigma}$ except in $i$ and $j$. Since $d_i = d_j$, we have that $\sigma$ is optimal solution to \eqref{eq:convexom}.

	Conditions \eqref{eq:condition1} and \eqref{eq:condition3} do not involve either $\alpha$ or $\sigma$, so that they remain true for any change in these variables. We take $\alpha'_t := \alpha_t, \forall t \neq i,j$, 
    \begin{equation*}
        \alpha'_i := \alpha_i + (\lambda_j - \lambda_i) d_{i i\star} \;\text{ and }\;\alpha'_{j} = \alpha_{j} + (\lambda_i - \lambda_j) d_{j j^\star};
    \end{equation*}
    where $a_{i^\star},a_{j^\star}$ are the centers where the points $a_i,a_j$ are allocated respectively, i.e., $\bar{z}_{i i\star}=\bar{z}_{j j\star}=1$. In this way, we have that
    \begin{equation*}
        \beta_{i i\star} = \alpha_i - \lambda_i d_{i i\star} = \alpha'_i - (\lambda_j - \lambda_i) d_{i i\star} - \lambda_i d_{i i\star} = \alpha'_i - \lambda_j d_{i i\star},
    \end{equation*}
    \begin{equation*}
        \beta_{j j\star} = \alpha_{j} - \lambda_j d_{j j\star} = \alpha'_{j} - (\lambda_i - \lambda_j) d_{j j\star} - \lambda_j d_{j j\star} = \alpha'_{j} - \lambda_i d_{j j\star}.
    \end{equation*}
	Hence, Condition \eqref{eq:condition2} is verified, and so $(\alpha',\beta,\omega,\sigma)$ is optimal solution to \ref{eq:lp-d}.

    In sum, that shows \ref{eq:lp-d} is invariant under the product of transpositions which respect the order, as was to be proved.
\end{proof}

Note that the strong condition above is related to an optimal solution to the pair \ref{eq:lp-bep} and \ref{eq:lp-d}. Thus, its verification can be difficult before solving the problem. In what follows, we provide a geometrical sufficient condition for the input data that assures the strength of a solution in terms of the data points.

\begin{proposition}\label{prop:equidistance}
	Let ${\rm D}$ be a distance matrix such that all its entries above the main diagonal are pairwise different. Then, if $(\bar{y}, \bar{z}, u, v)$ is a mixed-integer optimal solution to \ref{eq:lp-bep}, the solution is strongly sortable.
\end{proposition}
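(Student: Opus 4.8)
The plan is to show that the dual \ref{eq:lp-d} admits an optimal solution whose $\sigma$-block is a genuine permutation matrix, which is exactly what the definition of a strongly sortable solution requires. Since $(\bar y,\bar z,u,v)$ is optimal for \ref{eq:lp-bep}, strong LP duality produces an optimal dual solution $(\alpha,\beta,\omega,\sigma)$ with $\sigma$ doubly stochastic (feasible for \eqref{eq:tum1}--\eqref{eq:tu2}), and the forward direction of Theorem~\ref{th:ocf} supplies $\alpha$ and $\sigma$ satisfying \eqref{eq:th1}--\eqref{eq:th4}. The observation I would use throughout is that $\sigma$ enters all of these conditions only through the weights $\mu_i:=\sum_{r}\lambda_r\sigma_{ir}$, and that $\sigma$ can be taken integral precisely when $\mu$ is a permutation of $\bm\lambda$. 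Thus the problem reduces to realizing an admissible weight vector $\mu$ as a permutation of $\bm\lambda$.

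First I would establish the combinatorial heart of the statement: the vector of assigned distances $(d_i)_{i\in[m]}$, with $d_i=\sum_j\bar z_{ij}d_{ij}$, has exactly $p$ zero entries (at the centers) and $m-p$ pairwise distinct positive entries. Indeed, for two non-centers $i,i'$ assigned to centers $i^\star,i'^\star$, the values $d_{i i^\star}$ and $d_{i' i'^\star}$ are entries strictly above the diagonal of $\D$ (up to symmetry), so by hypothesis they coincide only if $\{i,i^\star\}=\{i',i'^\star\}$; since $i,i'\notin Q$ while $i^\star,i'^\star\in Q$, this forces $i=i'$. Consequently, by complementary slackness \eqref{eq:cs4} the matrix $\sigma$ is supported on the tight set of the sorting program \eqref{eq:convexom}, and the strict ordering of the positive distances forces $\mu_i=\lambda_{\rho(i)}$ for every non-center $i$, where $\rho(i)$ is its rank. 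Hence the non-center rows can be taken integral, and the only remaining freedom is the bijective assignment of the $p$ smallest weights $\lambda_{m-p+1},\dots,\lambda_m$ to the $p$ centers: a doubly stochastic $p\times p$ sub-block governed by the totally unimodular system \eqref{eq:tum1}--\eqref{eq:tu2}, whose vertices are permutations (Birkhoff--von Neumann).

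The main step, and the one I expect to be the real obstacle, is to upgrade this center sub-block to a permutation while preserving \eqref{eq:th1}--\eqref{eq:th4}. The difficulty is genuine coupling: since a center $k$ has $d_k=0$, condition \eqref{eq:th3} reduces to $\alpha_k\ge 0$ and is insensitive to $\mu_k$, but $\mu_k$ still appears in \eqref{eq:th4} through the center--center inequalities $\alpha_k\le\mu_k d_{kj}$ and, more delicately, in the balance \eqref{eq:th1}--\eqref{eq:th2}, since a center contributes $(\alpha_k-\mu_k d_{k\ell})_+$ to every point $\ell$. My plan is to fix the forced non-center weights and, writing $\alpha_k=\omega-S_k$ with $S_k=\sum_{t\notin Q}(\alpha_t-\mu_t d_{tk})_+$ (as dictated by \eqref{eq:th1} together with the center--center part of \eqref{eq:th4}), to choose the bijection so that each center contribution vanishes at every other point, i.e.\ $\mu_k d_{k\ell}\ge\alpha_k$; this makes \eqref{eq:th1}--\eqref{eq:th2} depend only on the already-fixed non-center data. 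The obstruction is that each $(\alpha_k-\mu_k d_{k\ell})_+$ is convex in $\mu_k$, so rounding the doubly stochastic block to a vertex can only increase these center-to-point contributions, and the fractional certificate does not by itself hand us an integral one. I would close this gap by exploiting optimality of the integral primal: a configuration in which no permutation of the bottom weights satisfies \eqref{eq:th2} forces some center to lie closer to another point than its own assignment distance, and relocating that center strictly improves the ordered objective, contradicting optimality of $(\bar y,\bar z)$. This collapses the admissible weight region on the center block to one whose vertices are permutations.

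Finally, since all centers share the distance value $0$, any assignment of the bottom positions to them yields a permutation $\bar\sigma\in\mathscr{S}_m$ with $d_{\bar\sigma(r)}\ge d_{\bar\sigma(r+1)}$; combined with the integral non-center part this is a sorting permutation, and the associated $(\alpha,\beta,\omega)$ is dual-optimal by the converse of Theorem~\ref{th:ocf}. This exhibits an optimal dual solution with binary $\sigma$, which is precisely the definition of a strongly sortable solution, consistent with the invariance recorded in Lemma~\ref{lemma:perm}.
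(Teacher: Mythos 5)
Your setup and your treatment of the non-center rows run parallel to the paper's proof: complementary slackness \eqref{eq:cs4} places the dual matrix $\sigma$ on the optimal face of the sorting program \eqref{eq:convexom}, and the free-of-equidistance hypothesis (distinct positive assigned distances, zeros only at centers) forces the effective weight $\mu_i=\sum_r\lambda_r\sigma_{ir}$ of every non-center row to equal $\lambda_{\rho(i)}$. The divergence is in what happens to the centers. The paper never rounds a center block: it argues that for \emph{every} pair $(i,r)$ with $\sigma_{ir}>0$ there is an optimal vertex (permutation) $\sigma'$ of that face with $\sigma'_{ir}=1$, and from this concludes $\lambda_r=\lambda_{\bar\sigma^{-1}(i)}$ for all such pairs, centers included; hence $\mu_i=\lambda_{\bar\sigma^{-1}(i)}$ identically, the substitution $\sigma\mapsto\bar\sigma$ changes no quantity appearing in \eqref{eq:condition1}--\eqref{eq:condition3} or in the dual constraints, and $(\alpha,\beta,\omega,\bar\sigma)$ is optimal with no further work. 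In the paper's route there is no residual ``bijective assignment of the bottom weights'' left to choose, so the combinatorial problem you spend your third paragraph on never arises. (You are right that this is the delicate point when $\lambda_{m-p+1},\dots,\lambda_m$ are not all equal, since two optimal permutations can park a zero-distance center at positions carrying different $\lambda$-values; but identifying a subtlety is not the same as resolving it.)

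The genuine gap is the closing claim of your main step: ``a configuration in which no permutation of the bottom weights satisfies \eqref{eq:th2} forces some center to lie closer to another point than its own assignment distance, and relocating that center strictly improves the ordered objective, contradicting optimality of $(\bar y,\bar z)$.'' This is asserted, not proved. You never exhibit the improving relocation; you never compute the effect on the ordered objective (moving a center changes the entire sorted distance vector, and for a general non-increasing $\bm\lambda$ a change in individual distances does not translate monotonically into the objective without an argument); and, most importantly, you never explain how infeasibility of a system of \emph{dual} inequalities in $(\alpha,\omega)$ --- variables you have meanwhile tied to the fractional certificate through $\alpha_k=\omega-S_k$ --- yields any statement about the \emph{primal} configuration of points. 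Since you yourself correctly observe that rounding the doubly stochastic center block must strictly decrease some weight $w_k$ (the coordinates sum to $\sum_{k>m-p}\lambda_k$), and that this can violate both the center--center inequalities $\alpha_k\le w_k d_{kj}$ in \eqref{eq:th4} and the balance \eqref{eq:th1}--\eqref{eq:th2}, this unproven implication carries the whole weight of your proof. Note also that whenever the bottom $p$ weights coincide --- which covers \kM, \kC, \kD{} and \kS{} with $k\le m-p$ --- your obstruction is vacuous and your argument collapses to the paper's; the only case your extra machinery is designed to handle is exactly the one it leaves open.
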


\begin{proof}
    Let $(\alpha,\beta,\omega,\sigma)$ be an optimal solution to \ref{eq:lp-d}. Let $\bar{\sigma}\in \mathscr{S}_m$ be the permutation that sorts the distance vector with respect to the centers $\{j\in [m]: \bar{y}_j=1\}$. Whether $\sigma \neq \bar{\sigma}$, we can assume $i,r \in [m]$ where $r \neq \bar{\sigma}^{-1}(i)$ and $\sigma_{ir}>0$. Nevertheless, the coefficient matrix of the system of linear equations \eqref{eq:tum1}-\eqref{eq:tu2} is totally unimodular, so $\sigma$ belongs to the face of optimal solutions to \eqref{eq:convexom} where $\bar{\sigma}$ must be a vertex. Such a face has, at least, dimension one, so there must exist another vertex $\sigma'\in \mathscr{S}_m$ optimum to \eqref{eq:convexom} where $\sigma'_{ir}=1$. Hence, $\lambda_{\bar{\sigma}^{-1}(i)}=\lambda_r$, for all $i, r\in [m]$ such that $\sigma_{ir}>0$. Eventually, we have that $\sum_{r=1}^m \lambda_r \sigma_{ir} = \lambda_{\bar{\sigma}^{-1}(i)} \sum_{r=1}^m \sigma_{ir} = \lambda_{\bar{\sigma}^{-1}(i)}$, thus by conditions \eqref{eq:condition1}-\eqref{eq:condition3}, $(\alpha,\beta,\omega,\bar{\sigma})$ is optimal solution to \ref{eq:lp-d}.
\end{proof}

For the sake of simplicity, the condition required in the above result will be referred to as the \emph{free-of-equidistance} condition for the input points, which we define as follows:
\begin{definition}[Free-of-Equidistance Points]
The set of points $P$ is said to be \emph{free-of-equidistance} if all entries above the main diagonal of its distance matrix are pairwise distinct.
\end{definition}
Note that this condition, that can be easily checked with the input data, means that there are no points at the same distance from each other. Given a set of centers $Q$, then $d_i\neq d_{j}$ for all $i\neq j\in [m]\setminus Q$, so there is a unique permutation that sorts the distance vector from non-centers to the centers in decreasing order.\\

\subsection{Conic Combination of ${\bf \lambda}$--Weights}

We analyze how to extend the results obtained for a given set of $\bm{\lambda}$ vectors to their combinations. This occurs, for instance, when combining the $p$-median and $p$-center problems in the so-called $p$-$\gamma$-centdian problem.

Given the family of vectors $\Lambda_i:=(\lambda_j^i)_{j\in [m]}\in \R^m$ defined by $\lambda_j^i=1$ if $j\leq i$; $0$ otherwise for all $i\in [m]$. It is easy to see that the set of possible lambda weights for a convex \ref{eq:domp} with nonnegative weights is the conic hull, $\cone\left(\Lambda_1,\ldots, \Lambda_m\right)$, of $\Lambda$'s. Furthermore, if \ref{eq:lp-bep} recovers integrality for weights $\bm{\lambda}$, then it also recovers integrality for weights $c\bm{\lambda}, \forall c\in \R_+$, and all of them take the same optimal values in the integral variables $y$ and $z$. Hence, the study of integrality recovery in \ref{eq:domp} is invariant along rays in $\cone\left(\Lambda_1,\ldots, \Lambda_m\right)$; that is why we can consider the convex hull, $\conv\left(\Lambda_1,\ldots, \Lambda_m\right)$, of $\Lambda$'s as the set of canonical representatives of discrete ordered $p$-median problem classes over the same set of input points $P$.

\begin{proposition} \label{prop:weights}
	Let \ref{eq:lp-bep} and $p\text{-}{\rm DOMP}^{\bm{\mu}}_{\rm LP}(\D)$ be two {\rm LP} relaxations. If both problems have the same strongly sortable solution, then the problem $p\text{-}{\rm DOMP}^{\bm{\lambda}+\bm{\mu}}_{\rm LP}(\D)$ also has the same strongly sortable solution.
\end{proposition}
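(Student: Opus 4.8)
The plan is to build an explicit dual certificate for the combined weight vector $\bm{\lambda}+\bm{\mu}$ by \emph{adding} the dual certificates of the two given problems, after first arranging that they use one common sorting permutation. Write $Q=\{j\in[m]:\bar y_j=1\}$ for the common set of centers, and observe that the cost vector whose $i$-th entry is $\sum_{j}\bar z_{ij}d_{ij}$ depends only on $(\bar y,\bar z)$ and $\D$, hence is identical in all three problems. Since $(\bar y,\bar z)$ is strongly sortable for both \ref{eq:lp-bep} and $p\text{-}{\rm DOMP}^{\bm{\mu}}_{\rm LP}(\D)$, each dual admits an optimal solution whose permutation matrix is binary, and each such matrix sorts this common cost vector in non-increasing order.

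First I would align the two permutations. Fix any $\bar\sigma\in\mathscr{S}_m$ with $d_{\bar\sigma(r)}\ge d_{\bar\sigma(r+1)}$ for the common cost vector, and apply Lemma~\ref{lemma:perm} to each problem separately to obtain dual optimal solutions $(\alpha^{\bm{\lambda}},\beta^{\bm{\lambda}},\omega^{\bm{\lambda}},\bar\sigma)$ and $(\alpha^{\bm{\mu}},\beta^{\bm{\mu}},\omega^{\bm{\mu}},\bar\sigma)$ that share the \emph{same} binary matrix $\bar\sigma$. This alignment is the crux of the argument: because $\sigma$ enters the dual constraints multiplicatively against the weights, the terms $\sum_{r}\lambda_r\sigma_{ir}d_{ij}$ are additive in the weight vector only when both certificates employ a single common permutation, which is precisely what Lemma~\ref{lemma:perm} delivers.

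Next I would set $\alpha:=\alpha^{\bm{\lambda}}+\alpha^{\bm{\mu}}$, $\beta:=\beta^{\bm{\lambda}}+\beta^{\bm{\mu}}$, $\omega:=\omega^{\bm{\lambda}}+\omega^{\bm{\mu}}$, and keep $\bar\sigma$, and check that $(\alpha,\beta,\omega,\bar\sigma)$ is feasible and optimality-certifying for the dual of $p\text{-}{\rm DOMP}^{\bm{\lambda}+\bm{\mu}}_{\rm LP}(\D)$. Summing the two first dual constraints term by term yields $\alpha_i\le\beta_{ij}+\sum_{r}(\lambda_r+\mu_r)\bar\sigma_{ir}d_{ij}$; summing the second yields $\sum_i\beta_{ij}\le\omega$; nonnegativity of $\beta$ is preserved under addition; and $\bar\sigma$ already satisfies the assignment constraints \eqref{eq:tum1}--\eqref{eq:tu2}. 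The same additive bookkeeping shows that the complementary-slackness identities \eqref{eq:condition1}--\eqref{eq:condition3}, which hold for each problem at $(\bar y,\bar z)$, are preserved, now read against the weights $\bm{\lambda}+\bm{\mu}$.

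Finally I would recover the continuous variables and conclude. Since $\bm{\lambda}+\bm{\mu}$ is again non-increasing and nonnegative, $\bar\sigma$ remains optimal for the inner sorting problem \eqref{eq:convexom} with these weights, so Lemma~\ref{lemma:bep} produces $\tilde u,\tilde v$ making $(\bar y,\bar z,\tilde u,\tilde v)$ feasible for the primal relaxation and satisfying \eqref{eq:cs4}. Primal feasibility, the dual feasibility just verified, and the complementary-slackness conditions \eqref{eq:cs1}--\eqref{eq:cs4} together certify that $(\bar y,\bar z,\tilde u,\tilde v)$ is optimal, with a dual optimum $(\alpha,\beta,\omega,\bar\sigma)$ whose permutation $\bar\sigma$ is binary; hence $(\bar y,\bar z)$ is a strongly sortable solution of $p\text{-}{\rm DOMP}^{\bm{\lambda}+\bm{\mu}}_{\rm LP}(\D)$. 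I expect the only genuinely delicate step to be the permutation alignment, since everything else reduces to adding primal--dual optimality conditions; the alignment is what guarantees the weighted-distance terms combine into a single $\sum_{r}(\lambda_r+\mu_r)\bar\sigma_{ir}d_{ij}$.
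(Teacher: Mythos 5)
Your proof is correct, and its skeleton coincides with the paper's: both arguments first use Lemma~\ref{lemma:perm} to place the two dual certificates over one common sorting permutation, and then exploit additivity of the certificates in the weight vector. The execution differs in a way worth noting. The paper routes the verification through Theorem~\ref{th:ocf}: it adds only the $\alpha$-vectors and must then re-establish conditions \eqref{eq:th1}--\eqref{eq:th4} for $\bm{\lambda}+\bm{\mu}$; the one delicate point there is that the ordered contribution function is built from positive parts, so the paper needs subadditivity of $t\mapsto t_+$ together with the sign information in \eqref{eq:th3}--\eqref{eq:th4} to conclude that contributions add \emph{exactly} at centers (and only subadditively at non-centers). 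You instead add the full dual triple $(\alpha,\beta,\omega)$ and check dual feasibility and the complementary-slackness identities \eqref{eq:condition1}--\eqref{eq:condition3} and \eqref{eq:cs4} directly; since you never force $\beta$ to equal a positive part, every verification is linear and the subadditivity step disappears entirely. What the paper's route buys is brevity and a statement phrased in terms of its central conceptual object, the ordered contribution function; what your route buys is a self-contained, purely linear argument at the level of raw LP duality, which moreover makes explicit the construction of $(\tilde{u},\tilde{v})$ via \eqref{eq:convexom} and Lemma~\ref{lemma:bep} and the verification of \eqref{eq:cs4}, a step the paper compresses into the closing remark that ``it is easy to check the strength of this solution.'' The price is that you re-derive the duality bookkeeping that Theorem~\ref{th:ocf} was designed to encapsulate.
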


\begin{proof}
    By Theorem~\ref{th:ocf} and Lemma~\ref{lemma:perm}, there exist $\alpha^{\bm{\lambda}}, \alpha^{\bm{\mu}} \in \R^m$, and the same $\sigma \in \mathscr{S}_m$ for both, such that \eqref{eq:th1}-\eqref{eq:th4} hold. Making an abuse of notation, hereinafter this proof, $C^\alpha_{\bm{\lambda}}:=C^{\alpha^{\bm{\lambda}}}_\sigma$, $C^\alpha_{\bm{\mu}}:=C^{\alpha^{\bm{\mu}}}_\sigma$, and $C^\alpha_{{\bm{\lambda}}+{\bm{\mu}}}:=C^{\alpha^{\bm{\lambda}}+\alpha^{\bm{\mu}}}_\sigma$. It is direct $\alpha^{\bm{\lambda}}+\alpha^{\bm{\mu}}$ and $\sigma$ satisfy \eqref{eq:th3} and \eqref{eq:th4} for weights ${\bm{\lambda}}+{\bm{\mu}}$. By subadditivity of the positive part function, we have that $C^\alpha_{{\bm{\lambda}}+{\bm{\mu}}}(j)\leq C^\alpha_{{\bm{\lambda}}}(j)+C^\alpha_{{\bm{\mu}}}(j)$ for all $j\in [m]$. Moreover, this inequality becomes tight when both addends in $\bullet_+$ have the same sign. Conditions~\eqref{eq:th3} and \eqref{eq:th4} state that such addends have the same sign for centers in the problems with weights ${\bm{\lambda}}$ and ${\bm{\mu}}$; therefore, the inequality is an equality for those, so \eqref{eq:th1} is satisfied. Finally, if $a_i$ is not a center, $C^\alpha_{{\bm{\lambda}}+{\bm{\mu}}}(i)\leq C^\alpha_{{\bm{\lambda}}}(i)+C^\alpha_{{\bm{\mu}}}(i)\leq C^\alpha_{{\bm{\lambda}}}(j)+C^\alpha_{{\bm{\mu}}}(j)=C^\alpha_{{\bm{\lambda}}+{\bm{\mu}}}(j)$ for all center $a_j$, verifying \eqref{eq:th2}. By Theorem~\ref{th:ocf}, the problem $p\text{-}{\rm DOMP}^{\bm{\lambda}+\bm{\mu}}_{\rm LP}(\D)$ recovers integrality, giving the same optimal solution as the former problems. Besides, it is easy to check the strength of this solution.
\end{proof}

As a consequence of Proposition~\ref{prop:weights} and the comments above, we know that if two \ref{eq:domp} with different weights share a strongly sortable solution, it will also be a strongly sortable solution to the \ref{eq:domp} with the conic combination of those weights. Nevertheless, this is a sufficient condition for recovery, but not necessary; there are conic combinations of weights which allow recovery, whereas their extremes do not.\\

\subsection{The Single--Center Case}

Below, we will show some results in the single-center case where there is no combinatorics in the allocation, so retrieving the question in the air: can we find the center via the LP relaxation? The characterization of this simpler case is the key to the subsequent analysis of the concrete problems.

The simplest case of the DOMP corresponds to the single-center setting ($p=1$). In this case, there is no need to solve a mathematical optimization problem to obtain a solution, since it can be derived directly by evaluating the ordered median operator in every singleton, i.e., ${\rm OM}_{\bm{\lambda}}(\{a_j\})$ for all $j \in [m]$. This requires only sorting the values in the $j$th column of the distance matrix and aggregating them with the appropriate ${\bm{\lambda}}$, a task that can be performed efficiently by ``naive" methods.

In what follows, we analyze the implications for the primal–dual characterization given in Theorem~\ref{th:ocf} for this case, and how recovery extends from the single-center to the multi-center setting. Specifically, the next result establishes necessary conditions for \ref{eq:domp} to recover integrality, based on the local $p=1$ version of the problem.

\begin{proposition} \label{prop:decomposition}
	Let $(P,\bm{\lambda},p)$ with $p>1$ be the input data of \ref{eq:domp} such that it recovers integrality with a strongly sortable solution. Let $\Gamma \subseteq [m]$ be all indices allocated to a center $a_j$, and $\sigma \in \mathscr{S}_m$ be a permutation which sorts the distance vector $(d_i)_{i\in [m]}$. The \ref{eq:domp} with input data $(P|_\Gamma,\bm{\lambda}_\Gamma,1)$ also recovers integrality and has $a_j$ as only center, where $P_\Gamma:=\{a_i\in P : i\in \Gamma\}$ and $\bm{\lambda}_\Gamma:=\left(\lambda_r\right)_{\sigma(r)\in \Gamma} \in \R^{|\Gamma|}$.
\end{proposition}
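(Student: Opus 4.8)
The plan is to transfer the global primal–dual certificate, restricted to the cluster $\Gamma$, into a single-center certificate for the subproblem, and then invoke the converse direction of Theorem~\ref{th:ocf} in its single-center form \eqref{eq:singleth1}--\eqref{eq:singleth2}. Since $(P,\bm{\lambda},p)$ recovers integrality with a \emph{strongly sortable} solution, Theorem~\ref{th:ocf} furnishes $\alpha\in\R^m$ together with a \emph{binary} permutation $\sigma\in\mathscr{S}_m$ sorting $(d_i)_{i\in[m]}$ and satisfying \eqref{eq:th1}--\eqref{eq:th4}; I write $\beta_{ij}:=\bigl(\alpha_i-\sum_{r}\lambda_r\sigma_{ir}d_{ij}\bigr)_+$ so that $C_\sigma^\alpha(j')=\sum_i\beta_{ij'}$. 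The binariness of $\sigma$ is exactly what makes the restriction meaningful: it lets me define $\sigma'$ as the restriction of $\sigma$ to the indices of $\Gamma$, which is again a genuine permutation sorting $(d_{ij})_{i\in\Gamma}$ in non-increasing order, as demanded by the hypothesis of the converse.

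The key algebraic step is the weight alignment. For $i\in\Gamma$ its global rank is $r=\sigma^{-1}(i)$, and by the very construction of $\bm{\lambda}_\Gamma=(\lambda_r)_{\sigma(r)\in\Gamma}$ the within-cluster rank $\rho$ of $i$ carries precisely the weight $(\lambda_\Gamma)_\rho=\lambda_r$. Hence $\sum_{\rho}(\lambda_\Gamma)_\rho\sigma'_{i\rho}d_{ij}=\lambda_{\sigma^{-1}(i)}d_{ij}=\sum_r\lambda_r\sigma_{ir}d_{ij}$ for every $i\in\Gamma$, so setting $\alpha':=\alpha|_\Gamma$ makes condition \eqref{eq:singleth2} for the subproblem coincide verbatim with \eqref{eq:th3} of the global certificate (all $i\in\Gamma$ are assigned to $a_j$). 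The same identity shows that the subproblem contribution agrees termwise with the global one restricted to $\Gamma$, namely $C_{\sigma'}^{\alpha'}(i')=\sum_{i\in\Gamma}\beta_{ii'}$ for all $i'\in\Gamma$.

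It then remains to establish \eqref{eq:singleth1}, that is $C_{\sigma'}^{\alpha'}(i')\le C_{\sigma'}^{\alpha'}(j)$ for $i'\in\Gamma\setminus\{j\}$. Because $a_j$ is a center, \eqref{eq:th4} forces $\beta_{ij}=0$ for every $i\notin\Gamma$, so the contribution to $j$ is unaffected by the restriction: $C_{\sigma'}^{\alpha'}(j)=\sum_{i\in\Gamma}\beta_{ij}=\sum_{i=1}^m\beta_{ij}=C_\sigma^\alpha(j)$. For any $i'\in\Gamma\setminus\{j\}$, discarding the nonnegative terms with $i\notin\Gamma$ and then applying the global inequality \eqref{eq:th2} gives
\[
C_{\sigma'}^{\alpha'}(i')=\sum_{i\in\Gamma}\beta_{ii'}\le\sum_{i=1}^m\beta_{ii'}=C_\sigma^\alpha(i')\le C_\sigma^\alpha(j)=C_{\sigma'}^{\alpha'}(j),
\]
which is exactly \eqref{eq:singleth1}. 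With $\sigma'$ sorting the cluster distances and $\alpha'$ satisfying \eqref{eq:singleth1}--\eqref{eq:singleth2}, the converse part of Theorem~\ref{th:ocf} (via Lemma~\ref{lemma:bep}) produces $\tilde u,\tilde v$ certifying that the integral point with a single open center at $a_j$ and every $i\in\Gamma$ assigned to it is optimal for the single-center relaxation, so $(P|_\Gamma,\bm{\lambda}_\Gamma,1)$ recovers integrality with $a_j$ as its only center. I expect the one delicate point to be the bookkeeping behind the weight alignment $\sum_\rho(\lambda_\Gamma)_\rho\sigma'_{i\rho}=\lambda_{\sigma^{-1}(i)}$, which is where the strong-sortability hypothesis is essential: it guarantees that $\sigma$ is an honest permutation whose restriction to $\Gamma$ still sorts the cluster distances and aligns with $\bm{\lambda}_\Gamma$; once this is in place, the two contribution conditions follow merely by dropping nonnegative terms and reading off \eqref{eq:th2}--\eqref{eq:th4}.
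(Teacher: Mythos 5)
Your proof is correct and follows essentially the same route as the paper: the paper's (very terse) argument restricts the complementary-slackness conditions \eqref{eq:condition1}--\eqref{eq:condition3} of the global strongly sortable primal--dual pair to indices in $\Gamma$ and ranks in $\sigma^{-1}(\Gamma)$, which is exactly what your expanded argument does, phrased through the ordered-contribution conditions \eqref{eq:th1}--\eqref{eq:th4} and \eqref{eq:singleth1}--\eqref{eq:singleth2} of Theorem~\ref{th:ocf} instead of the raw slackness conditions. Your write-up supplies the details the paper leaves as ``easy to check'' (the weight alignment $(\bm{\lambda}_\Gamma)_\rho=\lambda_{\sigma^{-1}(i)}$, the vanishing of cross-cluster contributions $\beta_{ij}=0$ for $i\notin\Gamma$ via \eqref{eq:th4}, and the monotone dropping of nonnegative terms), and correctly identifies strong sortability as the hypothesis making the restriction of $\sigma$ a genuine permutation.
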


\begin{proof}
    Let $(\bar{y},\bar{z},u,v)$ and $(\alpha, \beta, \omega, \sigma)$ be strongly sortable solutions for the primal-dual pair \ref{eq:lp-bep} and \ref{eq:lp-d}. It is easy to check conditions \eqref{eq:condition1}-\eqref{eq:condition3} still hold if $i$ and $j$ range over $\Gamma$ and $r$ ranges over $\sigma^{-1}(\Gamma)$.
\end{proof}

The above result implies that the multi-center problem has a worse recovery performance than the single-center case. Thus, there would be no hope to recover integrality for the problem if its $p=1$ does not recover.

The following result provides necessary and sufficient conditions for the single-center case, via the ordered contribution function, to recover integrality.

\begin{theorem} \label{th:characterization}
    Let $P\subset \R^n$ be a free-of-equidistance set of points. Let $a_j \in P$ be a point and $\sigma \in \mathscr{S}_m$ be a permutation sorting the distances to $a_j$. Then, $1$-{\rm DOMP} recovers integrality with $a_j$ as center if and only if
    \begin{equation*}
        \sum_{t=1}^m \lambda_{\sigma^{-1}(t)} d_{tj} \leq \sum_{t=1}^m \lambda_{\sigma^{-1}(t)} d_{ti}, \; \forall i \in [m].
    \end{equation*}
\end{theorem}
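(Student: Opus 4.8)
The plan is to specialize the primal--dual characterization of Theorem~\ref{th:ocf} to the single--center regime and then read off the stated inequality directly from the two reduced conditions \eqref{eq:singleth1}--\eqref{eq:singleth2}. First I would note that, since $P$ is free-of-equidistance, Proposition~\ref{prop:equidistance} guarantees that any mixed-integer optimal solution is strongly sortable; combined with Lemma~\ref{lemma:perm} this lets me assume that the matrix $\sigma$ appearing in Theorem~\ref{th:ocf} is exactly the (unique) permutation sorting the distances $(d_{ij})_i$ to $a_j$ in non-increasing order, so that $\sum_{r=1}^m \lambda_r \sigma_{ir} = \lambda_{\sigma^{-1}(i)}$ and hence $C_\sigma^\alpha(k)=\sum_{i=1}^m\big(\alpha_i-\lambda_{\sigma^{-1}(i)}d_{ik}\big)_+$ for every $k\in[m]$. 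Under this reduction, $1$-{\rm DOMP} recovers integrality with $a_j$ as center if and only if there exists $\alpha\in\R^m$ satisfying \eqref{eq:singleth1} and \eqref{eq:singleth2}, since in the single-center case $\Gamma_j=[m]$ and the conditions \eqref{eq:th1} and \eqref{eq:th4} of Theorem~\ref{th:ocf} are vacuous.

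The crucial observation I would exploit is the elementary inequality $t_+\ge t$, with equality precisely when $t\ge 0$. Condition \eqref{eq:singleth2} states $\alpha_i\ge\lambda_{\sigma^{-1}(i)}d_{ij}$ for every $i$, which forces every summand defining $C_\sigma^\alpha(j)$ to be nonnegative; therefore $C_\sigma^\alpha(j)$ collapses to the linear expression $\sum_{i=1}^m\alpha_i-\sum_{i=1}^m\lambda_{\sigma^{-1}(i)}d_{ij}$. For any other index $k$, dropping the positive parts only yields the lower bound $C_\sigma^\alpha(k)\ge \sum_{i=1}^m\alpha_i-\sum_{i=1}^m\lambda_{\sigma^{-1}(i)}d_{ik}$. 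This asymmetry---exact equality at the center, a one-sided bound elsewhere---is what produces an inequality rather than an equality in the statement.

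For the necessity direction I would chain these two facts with \eqref{eq:singleth1}: for each $k$,
\[
\sum_{i=1}^m\alpha_i-\sum_{i=1}^m\lambda_{\sigma^{-1}(i)}d_{ik}\ \le\ C_\sigma^\alpha(k)\ \le\ C_\sigma^\alpha(j)\ =\ \sum_{i=1}^m\alpha_i-\sum_{i=1}^m\lambda_{\sigma^{-1}(i)}d_{ij},
\]
and cancel $\sum_i\alpha_i$ to obtain $\sum_i\lambda_{\sigma^{-1}(i)}d_{ij}\le\sum_i\lambda_{\sigma^{-1}(i)}d_{ik}$, which is the desired inequality after renaming indices. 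For sufficiency I would simply choose $\alpha$ so large that every positive part is active at every point---for instance $\alpha_i:=\lambda_{\sigma^{-1}(i)}\max_{k}d_{ik}$---so that $C_\sigma^\alpha(k)=\sum_i\alpha_i-\sum_i\lambda_{\sigma^{-1}(i)}d_{ik}$ holds for all $k$ simultaneously. Then \eqref{eq:singleth2} is immediate and, after cancelling $\sum_i\alpha_i$, condition \eqref{eq:singleth1} is exactly the assumed inequality. Invoking the converse part of Theorem~\ref{th:ocf} with this $\alpha$ and the sorting permutation $\sigma$ then yields recovery with $a_j$ as center.

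The only genuinely delicate step is the first one: ensuring that the $\sigma$ furnished by Theorem~\ref{th:ocf} may be taken to be the genuine sorting permutation of the column $(d_{ij})_i$ rather than an arbitrary doubly stochastic matrix. This is precisely where the free-of-equidistance hypothesis is needed---it makes the sorting permutation unique (the entries of column $j$ are pairwise distinct apart from $d_{jj}=0$) and, through Proposition~\ref{prop:equidistance}, forces $\sigma$ to be integral. Everything after that reduces to the two-line manipulation of the positive-part function above, with no combinatorial case analysis over assignments required.
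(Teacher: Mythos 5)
Your proof is correct and follows essentially the same route as the paper's: both reduce to the single-center conditions \eqref{eq:singleth1}--\eqref{eq:singleth2}, prove necessity by combining $(t)_+ \geq t$ with the positive-part-free expression of $C_\sigma^\alpha(j)$ forced by \eqref{eq:singleth2}, and prove sufficiency by choosing $\alpha$ so large that every positive part becomes linear, at which point \eqref{eq:singleth1} is exactly the assumed inequality. Your explicit appeal to Proposition~\ref{prop:equidistance} and Lemma~\ref{lemma:perm} to justify taking $\sigma$ to be the genuine sorting permutation is a detail the paper leaves implicit, but the underlying argument is the same.
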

\begin{proof}
    The sufficient condition is straightforward. One can just take the vector $\alpha$ so that $\alpha_t \geq \lambda_1 \max_{i,j \in [m]} d_{ij}$, i.e., at least $\lambda_1$ (the largest weight) times the farthest distance between points in $P$, componentwise. Using the inequalities of the hypothesis, it is easy to see that \eqref{eq:singleth1} and \eqref{eq:singleth2} hold; thus, Theorem~\ref{th:ocf} ensures that the solution is optimal to $1\text{-}{\rm DOMP}^{\bm{\lambda}}_{\rm LP}(\D)$.

    Conversely, it remains to prove the necessary condition. For fixed $i,t \in [m]$, whether $\alpha_t < \lambda_{\sigma^{-1}(t)} d_{ti}$ we have that
\begin{equation*}
    \alpha_t - \lambda_{\sigma^{-1}(t)} d_{tj} - (\alpha_t - \lambda_{\sigma^{-1}(t)} d_{ti})_+ = \alpha_t - \lambda_{\sigma^{-1}(t)} d_{tj} < \lambda_{\sigma^{-1}(t)} d_{ti} - \lambda_{\sigma^{-1}(t)} d_{tj}.
\end{equation*}
Hence, we always have
\begin{equation}\label{eq:th9}
    \alpha_t - \lambda_{\sigma^{-1}(t)} d_{tj} - (\alpha_t - \lambda_{\sigma^{-1}(t)} d_{ti})_+ \leq \lambda_{\sigma^{-1}(t)} d_{ti} - \lambda_{\sigma^{-1}(t)} d_{tj}.
\end{equation}

Theorem~\ref{th:ocf} ensures \eqref{eq:singleth1} and \eqref{eq:singleth2} hold for some $\alpha$. Thus, for all $i\in [m]$, it follows 
    \begin{align*}
        0\leq C_{\sigma}^\alpha(j) - C_{\sigma}^\alpha(i) &= \sum_{t=1}^m \left[ \alpha_t - \lambda_{\sigma^{-1}(t)} d_{tj} - (\alpha_t - \lambda_{\sigma^{-1}(t)} d_{ti})_+ \right]\\
        & \leq \sum_{t=1}^m \lambda_{\sigma^{-1}(t)} d_{ti} - \sum_{t=1}^m\lambda_{\sigma^{-1}(t)} d_{tj},
    \end{align*}
where the former inequality comes from \eqref{eq:singleth1}, the equality comes from \eqref{eq:singleth2}, and the latter inequality comes from \eqref{eq:th9}.
\end{proof}

\section{On Some Special Problems}\label{sec:specialcases}

In this section, we study some particular problems of interest, namely those specified in Table \ref{tab:cases}. 
\begin{table}[h!]
\centering
\begin{tabular}{c|c|c}
    $\mathbf{\lambda}$ & Problem & Acronym \\ \hline
    $(1, \ldots, 1)$ & $p$-median & \kM \\
    $(1, 0, \ldots, 0)$ & $p$-center & \kC \\
    $(\underbrace{1, \ldots, 1}_{k}, 0, \ldots, 0)$ & $p$-$k$-sum & \kS \\
    $(1, \gamma, \ldots, \gamma)$ & $p$-$\gamma$-centdian & \kD
\end{tabular}
\caption{Special problems of interest analyzed in Section~\ref{sec:specialcases} and their acronyms used in Section~\ref{sec:exps}.}
\label{tab:cases}
\end{table}

The first case we study corresponds to the most popular problem within the DOMP family: the $p$-median problem. The conditions on the input distances required to recover integrality in this problem have been extensively analyzed in recent works~\citep[see, e.g.,][]{r:del2023k}. Several favorable, and even optimistic, properties have been established for this case, contributing to a rather clear understanding of its structure and behavior. However, when we shift our attention to ordered problems, where the combinatorial complexity increases, the situation changes substantially. In such cases, the same properties cannot be expected to hold, as the additional elements that define problems such as the $p$-center or the $p$-$\gamma$-centdian introduce new layers of complexity. Our findings highlight the intrinsic difficulty of addressing this richer setting, where the interaction among multiple factors prevents the preservation of the desirable characteristics observed for the $p$-median problem. Moreover, when considering combinations of ordered median operators to explore the differences and similarities between DOMPs defined by distinct weights, we conjecture that problems exhibit better behavior, in terms of the tightness of their LP relaxation, when they are closer to the $p$-median, and worse behavior as they move away from it, particularly when robustness considerations dominate the decision-making process.

The methodology we adopt builds directly on the structure of the location–allocation decision variables that define the problem. On the one hand, it is easy to observe that the behavior of the allocation variables in their relaxed form improves whenever the distribution of the input points can be naturally partitioned into $p$ clusters. In fact, the most favorable situation arises when each point within a cluster is closer to every other point in that same cluster (within-cluster distance) than to any point in a different cluster (between-cluster distance). In this idealized setting, integrality recovery for the allocation variables is guaranteed, and the remaining question is whether the centers of the clusters can also be recovered through the relaxed location variables.

Prior work on this topic for the classical $p$-median problem, has focused on quantifying how tightly the input point distribution can be clustered while still ensuring that the LP relaxation of the problem remains exact with high probability. Following this line, we show that once sorting constraints are introduced into the problem, the tightness of the relaxation is lost with respect to the location variables. Consequently, even under the most favorable input distributions, one cannot expect the LP relaxation to retain the same desirable properties.

\begin{theorem}\label{th:specialcases}
    Let $P\subset \R^n$ be a free-of-equidistance set of points and let $p$ be a natural. Then, the following statements hold.
    \begin{enumerate}
        \item If $\sum_{r=2}^m \lambda_r < \lambda_1$, the \ref{eq:domp} does not recover integrality.\label{prop:one}
        \item If no three points are collinear and $\sum_{r=2}^m \lambda_r = \lambda_1$, the \ref{eq:domp} does not recover integrality.\label{prop:two}
        \item ${\rm M}_1$ recovers integrality (even of the points are non free of equidistance). \label{prop:median}
        \item If no three points are collinear, ${\rm S}_p^2$ does not recover integrality.\label{prop:sum}
        \item ${\rm C}_p$ does not recover integrality.\label{prop:center}
        \item Let $0\leq\gamma\leq1$ be a real. If $\gamma(m-1)< 1$, the ${\rm D}_p^\gamma$ does not recover integrality.\label{prop:centdian}
    \end{enumerate}
\end{theorem}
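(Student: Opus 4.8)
The plan is to split the six statements into one positive result, a core pair of negative results, and three corollaries obtained by instantiation. The positive statement~\ref{prop:median} I would handle directly from the single-center conditions \eqref{eq:singleth1}--\eqref{eq:singleth2}: for $\bm\lambda=(1,\dots,1)$ the weighted distance $\sum_r\lambda_r\sigma_{ir}d_{ij}$ collapses to $d_{ij}$ regardless of $\sigma$, so choosing $a_j$ to be any $1$-median and every $\alpha_i$ larger than $\max_{i,j}d_{ij}$ gives $C^\alpha_\sigma(j')=\sum_i\alpha_i-\sum_i d_{ij'}$, which is maximal exactly at the $1$-median. Thus \eqref{eq:singleth1} reduces to $1$-median optimality and \eqref{eq:singleth2} holds trivially; no free-of-equidistance hypothesis enters, as the statement promises.

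For the negative statements the engine is a reduction to the single-center characterization. Assuming recovery, the integral optimal solution is strongly sortable by Proposition~\ref{prop:equidistance}, so Proposition~\ref{prop:decomposition} forces the single-center problem $(P|_\Gamma,\bm\lambda_\Gamma,1)$ to recover with its designated center on every cluster $\Gamma$. I would then expose a contradiction on the \emph{critical cluster} $\Gamma^\star$ containing the point $t^\star=\sigma(1)$ that is globally farthest from its center. For $p<m$ this point is not its own center, so $|\Gamma^\star|\ge 2$, and (the local and global orderings agreeing along $\Gamma^\star$) the induced weights $\bm\mu:=\bm\lambda_{\Gamma^\star}$ satisfy $\mu_1=\lambda_1$ and $\sum_{r\ge 2}\mu_r\le\sum_{r\ge 2}\lambda_r$. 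Evaluating the necessary inequality of Theorem~\ref{th:characterization} at $i=t^\star$, the diagonal term $d_{t^\star t^\star}=0$ drops and the triangle inequality $d_{t,t^\star}\le d_{tj}+d_{jt^\star}$ yields
\begin{equation*}
\sum_{t}\mu_{\sigma^{-1}(t)}d_{tj}-\sum_{t}\mu_{\sigma^{-1}(t)}d_{t,t^\star}\ \ge\ d_{t^\star j}\Big(\mu_1-\sum_{r\ge 2}\mu_r\Big)\ \ge\ d_{t^\star j}\Big(\lambda_1-\sum_{r\ge 2}\lambda_r\Big).
\end{equation*}
When $\sum_{r\ge 2}\lambda_r<\lambda_1$ the right-hand side is strictly positive (as $d_{t^\star j}>0$), contradicting Theorem~\ref{th:characterization}; this proves statement~\ref{prop:one}, and statements~\ref{prop:center} and~\ref{prop:centdian} fall out as instances, since $\bm\lambda=(1,0,\dots,0)$ gives $\sum_{r\ge 2}\lambda_r=0<1$ and $\bm\lambda=(1,\gamma,\dots,\gamma)$ gives $\sum_{r\ge 2}\lambda_r=(m-1)\gamma<1$. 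The induced cluster weights are then $(1,0,\dots,0)$ and $(1,\gamma,\dots,\gamma)$, so $\mu_1-\sum_{r\ge2}\mu_r\ge 1-(m-1)\gamma>0$ survives the restriction and no collinearity hypothesis is needed.

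The boundary case $\sum_{r\ge 2}\lambda_r=\lambda_1$ of statement~\ref{prop:two} (and its instance statement~\ref{prop:sum}, since $\bm\lambda=(1,1,0,\dots,0)$ satisfies $\sum_{r\ge 2}\lambda_r=1=\lambda_1$) is where the work concentrates, because the displayed bound only gives $\ge 0$. I would upgrade it to a strict inequality using that no three points are collinear, which makes $d_{t,t^\star}<d_{tj}+d_{jt^\star}$ strict for every triple of distinct points; summing against the positive weights then produces a strict gap \emph{provided} some index $t\in\Gamma^\star\setminus\{j,t^\star\}$ carries positive weight $\mu_{\sigma^{-1}(t)}$. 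Because the weights are non-increasing, this is automatic whenever $|\Gamma^\star|\ge 3$ with $\sigma(2)\in\Gamma^\star$, which for ${\rm S}_p^2$ covers all $p\le m-2$.

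The main obstacle is precisely the residual degenerate configuration in which $\Gamma^\star$ collapses to the uniform pair $\{j,t^\star\}$ with $\bm\mu=(\mu_1,\mu_1)$: there the single-center problem on $\Gamma^\star$ genuinely recovers and the decomposition is silent. For ${\rm S}_p^2$ this forces $p=m-1$, and one checks on small instances that the LP still strictly undercuts the integral optimum through fractional center opening. I therefore expect the hard part to be a direct global argument for this degenerate family—exhibiting a fractional point of $\mathcal P_{\mathrm{LP}}$ (for example spreading the center mass as $y_j=p/m$ and assigning each point partly to itself) that strictly lowers the sum of the two largest assigned distances, or equivalently showing that the full system \eqref{eq:th1}--\eqref{eq:th4} of Theorem~\ref{th:ocf} is infeasible—and verifying that this improvement holds uniformly over all free-of-equidistance, non-collinear inputs.
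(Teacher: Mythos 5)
Your architecture matches the paper's almost exactly: statement~\ref{prop:median} is proved the same way via \eqref{eq:singleth1}--\eqref{eq:singleth2}; statements~\ref{prop:one}, \ref{prop:center}, \ref{prop:centdian} use the same chain (Proposition~\ref{prop:equidistance} for strong sortability, Proposition~\ref{prop:decomposition} to pass to the critical cluster, Theorem~\ref{th:characterization} plus the triangle inequality to force $\sum_{r\geq 2}\lambda_r \geq \lambda_1$); and for \ref{prop:two}/\ref{prop:sum} you exploit, as the paper does, the equality case of that estimate together with strictness of the triangle inequality for non-collinear triples. Up to the degenerate configuration, the proposal is correct and essentially the published argument.

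The genuine gap is precisely the degenerate family you isolate and then leave as a conjecture: $\Gamma^\star=\{t^\star,j\}$ with induced weights proportional to $(1,1)$, which forces $p=m-1$ and $\bm{\lambda}\propto(1,1,0,\dots,0)$. Your proposed repair does not go through as sketched. First, the concrete fractional point you suggest fails on easy instances: with $m=3$, $p=2$ and mutual distances $d_{12}=1$, $d_{13}=10$, $d_{23}=10.1$ (free of equidistance, not collinear), the uniform choice $y_j=p/m=2/3$ with each point partly self-assigned gives objective $(10+1)/3>1$, i.e., \emph{worse} than the integral optimum $1$; a fractional improvement exists but must be non-uniform (e.g., $y=(0.525,0.525,0.95)$ with nearest-neighbor reassignment yields $0.975<1$), so ``verifying the improvement holds uniformly'' is not a routine check but a construction you have not supplied. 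Second, no new LP construction is actually needed: the missing idea, which is what makes the paper's ``follows directly from Proposition~\ref{prop:decomposition} and \ref{prop:one}'' legitimate, is that in this degenerate configuration all $m-1$ centers are tied at distance $0$, so by Lemma~\ref{lemma:perm} the decomposition of Proposition~\ref{prop:decomposition} can be carried out with \emph{any} sorting permutation. Choosing one that places the center $j$ of the critical pair at rank $m$ (rather than rank $2$) makes the induced weights of $\Gamma^\star$ equal to $(\lambda_1,\lambda_m)$; for the $2$-sum this is $(1,0)$, and in general $\lambda_m<\lambda_1$ unless all weights are equal, which is incompatible with $\sum_{r\geq 2}\lambda_r=\lambda_1$ for $m\geq 3$. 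Statement~\ref{prop:one} applied to this two-point single-center problem then yields the contradiction. In short: you correctly identified where the difficulty sits, but the step you propose there would fail as stated, and the resolution is the re-sorting freedom among tied zero distances that is already in the paper's toolkit.
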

\begin{proof}
    Firstly, we show \ref{prop:one}. Let us assume that the $1$-{\rm DOMP} recovers integrality with a strongly sortable solution with $a_j$ as center and $i_1 := \sigma(1)$, where $\sigma \in \mathscr{S}_m$ is a permutation that sorts the distances to $a_j$. We have that
\begin{align*}
    0 \leq \sum_{t=1}^m \lambda_{\sigma^{-1}(t)} (d_{ti_1} - d_{tj}) & = - \lambda_1 d_{i_1j} + \sum_{\stackrel{t=1}{t\neq i_1}}^m \lambda_{\sigma^{-1}(t)} (d_{ti_1} - d_{tj}) \\
    & \leq - \lambda_1 d_{i_1j} + d_{i_1j}\sum_{r=2}^m \lambda_r,
\end{align*}
where the first inequality comes from Theorem~\ref{th:characterization} and the last one comes from the triangular inequality $d_{tj} + d_{i_1j} \geq d_{ti_1}$ for all $t \in [m] \setminus \left\{ i_1 \right\}$. Thus $- \lambda_1 + \sum_{r=2}^m \lambda_r \geq 0$. Now, we have to apply Proposition~\ref{prop:decomposition} to prove the claim for general \ref{eq:domp}.

To show \ref{prop:two}, we consider again that $1$-{\rm DOMP} recovers integrality and the equality in the hypothesis holds. Then 
\begin{equation*}
    \sum_{\stackrel{t=1}{t\neq i_1}}^m \lambda_{\sigma^{-1}(t)} (d_{ti_1} - d_{tj}) = \lambda_1 d_{i_1j},
\end{equation*}
what implies $d_{ti_1} - d_{tj}=d_{i_1j}$ for every $t\in [m]$ such that $t\neq i_1$ and $\lambda_{\sigma^{-1}(t)}\neq 0$. Hence, the final claim follows directly from Proposition~\ref{prop:decomposition} and \ref{prop:one}.

To continue, we show \ref{prop:median}. Let $a_j\in P$ be the median of the solution to \ref{eq:milp-bep}. We take $\alpha\in \R^m$ such that $\alpha_t \geq \max_{i,j \in [m]} d_{ij}$ for every $t \in [m]$, i.e., each component of $\alpha$ is at least the farthest distance between points of $P$. We also take $\sigma \in \mathscr{S}_m$ any permutation sorting the vector of distances to $a_j$. In such a way, $\alpha_i \geq d_{ij} = \sum_{r=1}^m \lambda_r \sigma_{ir} d_{ij}, \forall i \in [m]$, so \eqref{eq:singleth2} is satisfied.

    On the other hand, for every $i\in [m]$
    \begin{equation*}
        C_\sigma^\alpha(i) = \sum_{t=1}^m \left( \alpha_t - \sum_{r=1}^m \lambda_r \sigma_{tr} d_{ti} \right)_+ = \sum_{t=1}^m \alpha_t - \sum_{t=1}^m d_{ti}.
    \end{equation*}
	It is clear that $j$ minimizes $i \mapsto \sum_{t=1}^m d_{ti}$, then, maximizes $C_\sigma^\alpha(i)$, satisfying \eqref{eq:singleth1}. By Theorem~\ref{th:ocf}, the $1$-median recovers integrality with $a_j$ as center.

    Finally, \ref{prop:sum} follows directly from \ref {prop:two}; and  \ref{prop:center} and \ref{prop:centdian} are consequence of \ref{prop:one}.
\end{proof}

Note that, although Proposition~\ref{prop:median} does not hold in general for $p > 1$, if the input points are distributed into $p$ clusters that are pairwise sufficiently far apart, the LP relaxation of the $p$-median problem remains tight. This, however, is no longer the case for the $p$-2-sum (Proposition~\ref{prop:sum}) or $p$-center (Proposition~\ref{prop:center}) problems. In these settings, even when the input data is free of equidistance, not even the most favorable clustered distribution of the input points yields a tight LP relaxation. Finally, Proposition~\ref{prop:centdian} shows that $m \geq \frac{1}{\gamma} + 1$ is a necessary condition for integrality recovery in the $p$-$\gamma$-centdian problem. This implies that, in order to observe any improvement in the performance of the LP relaxation, either the $p$-$\gamma$-centdian must behave similarly to the $p$-median (i.e., for values of $\gamma$ close to one) or the number of points must be large ($m$ large). Asymptotically, this condition indicates that for the $p$-center problem (where $\gamma = 0$), there is no possibility for the LP relaxation to be tight.

\section{Empirical Results}\label{sec:exps}

This section presents the results of a series of computational experiments designed to evaluate the effect of sorting when recovering solutions of different DOMP problems from their linear relaxations. The benchmark datasets used correspond to the first $20$ \texttt{pmed} instances (\texttt{pmed1}–\texttt{pmed20}) from ORLIB. The input cardinality $m$ takes values in $\{100, 200, 300, 400\}$, with instances \texttt{pmed1–5}, \texttt{pmed6–10}, \texttt{pmed11–15}, and \texttt{pmed16–20} corresponding to each value, respectively. 

It is worth mentioning that our goal is not to provide a standard computational study as commonly found in the literature on the DOMP, which typically aims to identify the limitations of MILP formulations~\citep[see, e.g.]{Marin2020,Labbe2017}. Instead, we focus on analyzing the quality of the LP relaxations and the features that most influence these bounds. Specifically, we first examine the differences among the possible variants of the DOMP considered (in light of the theoretical results established in the previous sections), and subsequently analyze the impact of the input data structure, whose characteristics will be measured in terms of their clusterability.

As already observed in recent works~\citep[see, e.g.,][]{awasthi2015relax,r:nellore2015recovery,r:del2023k}, the integrality recovery properties of an oracle are strongly influenced by the distributional characteristics of the input points. Specifically, for the $p$-median problem, the authors proved that when the input points are generated around $p$ Euclidean balls whose centers are pairwise sufficiently separated in space, the probability of recovering the integral solution from its relaxation is high.

In our case, the input data from ORLIB are not generated according to any geometric model, beyond a random generation of symmetric and with zero-diagonal distance matrices, where the triangle inequality holds, and the number of potential clusters is unknown. Therefore, for each instance, we compute a measure that allows us to identify whether it is \emph{clusterizable}, in the sense that several points can be grouped into coherent clusters; otherwise, the data are ``uniformly'' distributed in space. This information will help us empirically assess whether the spatial distribution of the points affects the LP relaxation of the problem, as has been observed in the particular case of the $p$-median problem (with $p$ clusters).

\subsection*{Experiment Design}

For each of the instances, we run the DOMP model for the the cases $p$-median, $p$-center, $p$-$\gamma$-centdian, and $p$-$k$-sum for $p \in \{2,3,5\}$, $\gamma \in \{\frac{1}{4},\frac{1}{2},\frac{3}{4}\}$, and $k \in \left\{2,\left\lceil \frac{1}{4} m\right\rceil, \left\lceil \frac{1}{2} m\right\rceil,\left\lceil \frac{3}{4} m\right\rceil\right\}$, where $m$ is the cardinality of the instance. In total, we solved $20 \times 3 (1 + 1 + 3 + 4) = 540$ instances of the DOMP. 

All computational experiments were performed on Huawei FusionServer Pro XH321 (\texttt{albaic\'in} at Universidad de Granada ( \url{https://supercomputacion.ugr.es/arquitecturas/albaicin/}) with an Intel Xeon Gold 6258R CPU @ 2.70GHz with 28 cores. Optimization tasks were solved using Gurobi Optimizer version~12.0.1 within a time limit of 2 hours.

For each solved instance, we collected information regarding both the computational effort required to solve the problem and the quality of the LP relaxation with respect to the optimal MILP solution. The following performance indicators were recorded: the CPU time required to optimally solve the instance (\texttt{CPUTime}); the MIP gap when the instance was not solved to optimality within the time limit (\texttt{MIPGap}); the number of nodes explored in the branch-and-bound tree until the problem was solved or the time limit was reached (\texttt{Nodes}); the best value of a feasible solution obtained within the time limit (\texttt{BestObj}); the best lower bound (LP relaxation) obtained within the time limit (\texttt{BestRel}); the gap of the LP relaxation with respect to the best feasible value (\texttt{GapLP}); and the gap of the LP relaxation at the end of exploring the root node with respect to the best feasible value (\texttt{GapLPRoot}).\\

\subsection*{Computational Performance}

In what follows, we analyze the computational performance of the models and their relation to the quality of the LP relaxation (and its impact on the required computational effort). 

A first overview of the obtained results, which will be justified by the deeper study presented later, is given by the performance profile by type of DOMP shown in Figure~\ref{fig:pp}. On the $x$-axis we represent the CPU times (in seconds), whereas on the $y$-axis we show the percentage of instances solved within each CPU time. One can observe that the $p$-median problems (M) are the least time-demanding, followed by the $p$-$k$-sum problems (S), then the $p$-$\gamma$-centdian problems (D), and finally the most challenging case are the $p$-center problems (C). This simple first description already provides an idea of the impact of the quality of the LP relaxation when solving the MILP problems under consideration. We proved, theoretically, that the $p$-center problem never recovers integrality and, consequently, requires more iterations to obtain an integral solution. However, for the $p$-median problem, based on the previous results, there is some hope to solve it by means of its LP relaxation, as is also the case for the $p$-$\gamma$-centdian and $p$-$k$-sum problems (for adequate values of $\gamma$ and $k$, respectively). Specifically, for those problems, Figure~\ref{fig:pp_DS} reports the performance profiles disaggregated by the parameter defining the problem ($\gamma$ or $k$), where it can be observed, more emphatically for the $p$-$k$-sum, that certain parameter values induce significantly more difficult problems than others.

\begin{figure}[h]
\centering
\includegraphics[width=.75\textwidth]{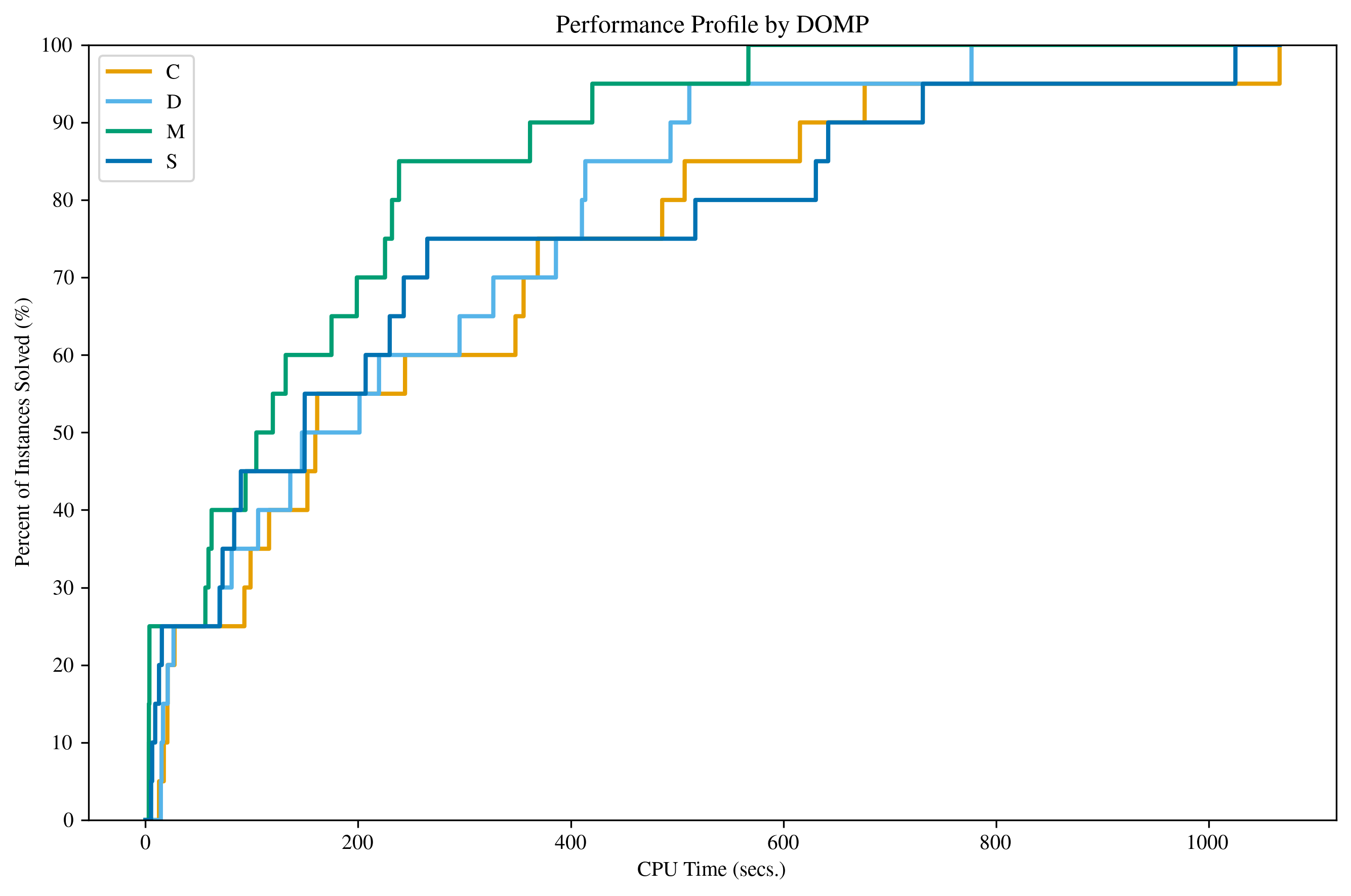}
\caption{Performance profile for all the instances classified by type of DOMP.\label{fig:pp}}
\end{figure}

\begin{figure}[h]
\centering
\adjustbox{width=\textwidth}{\includegraphics{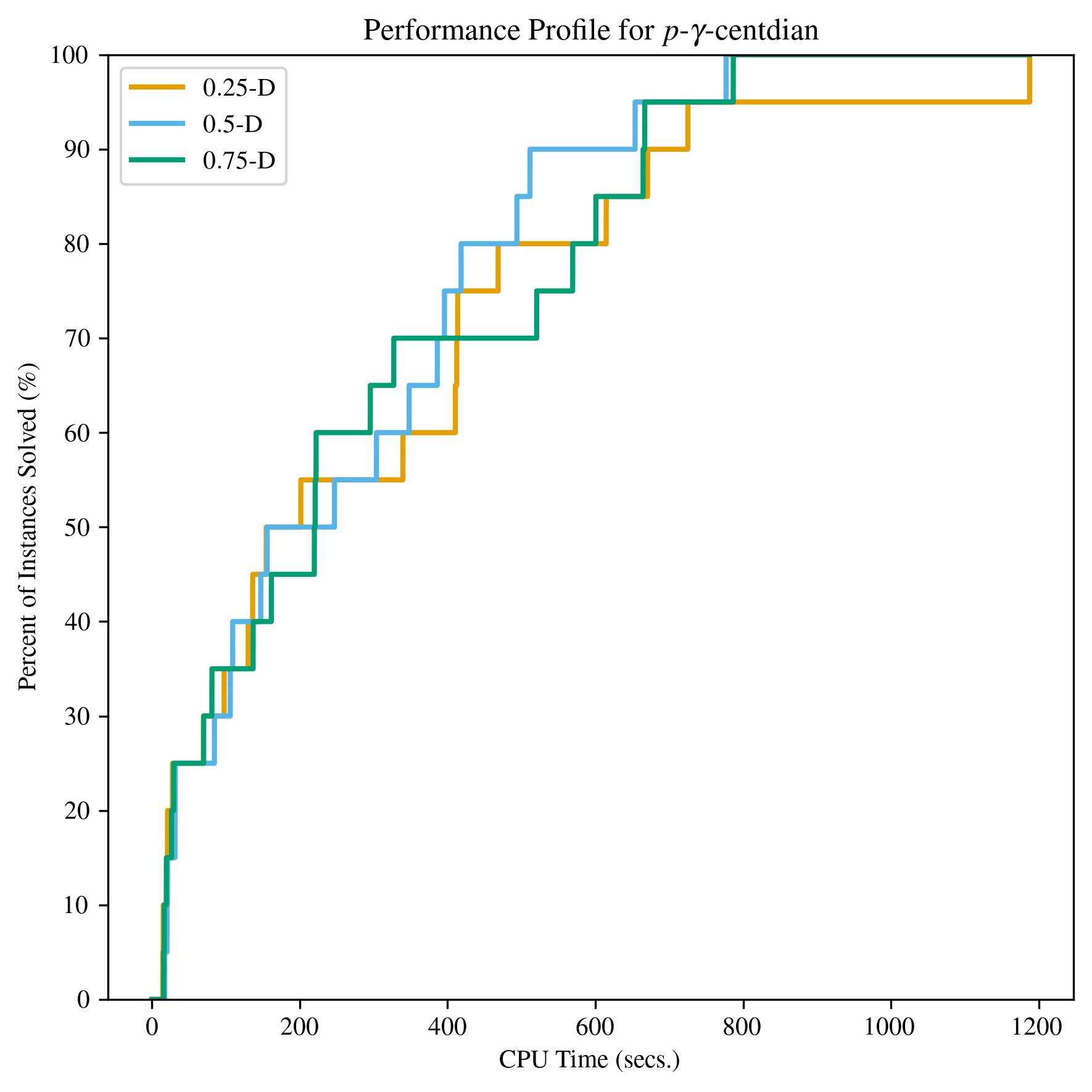}~\includegraphics{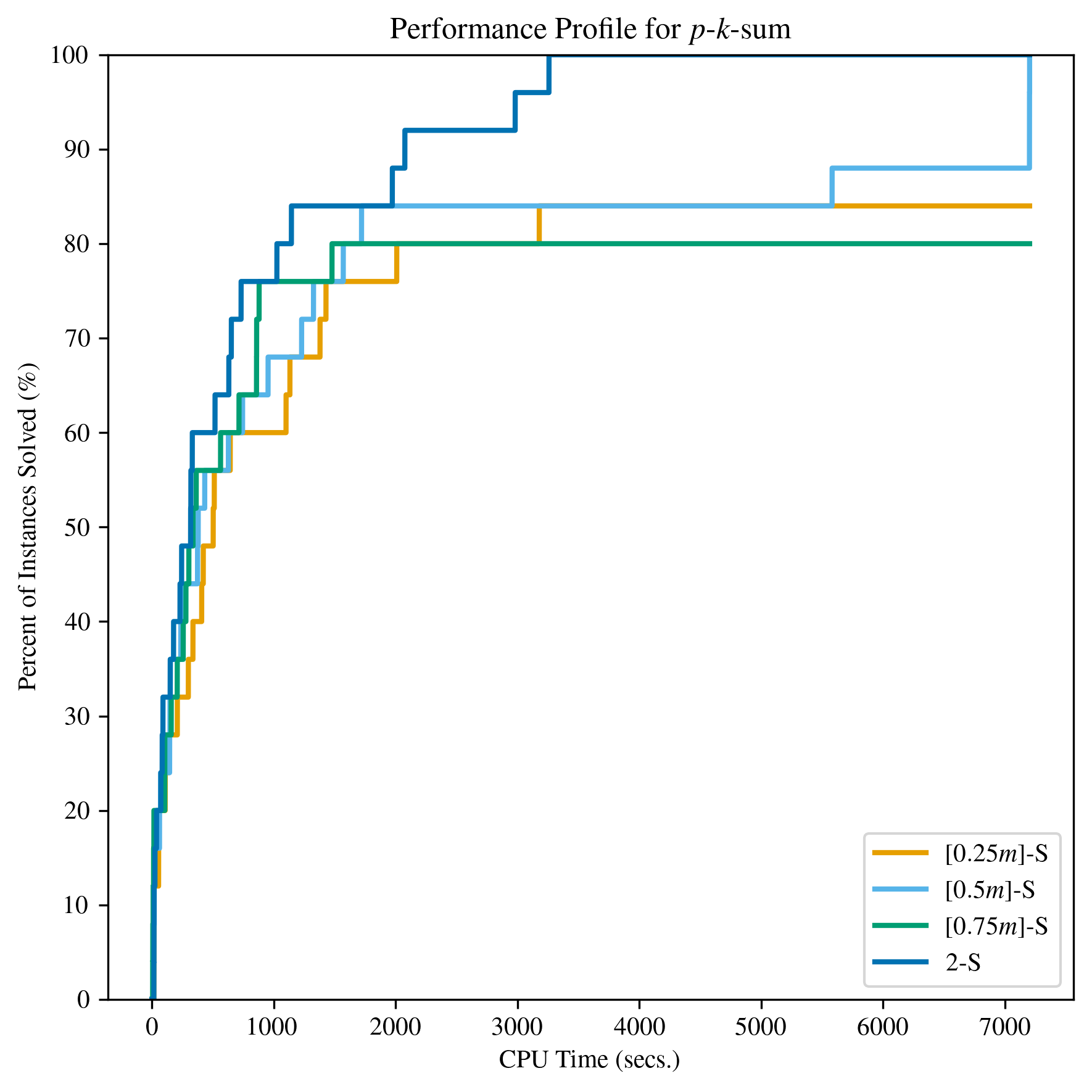}}
\caption{Performance profile for the different types of centdian (left) and sum (right) problems.\label{fig:pp_DS}}
\end{figure}

This information provides a first indication of how challenging it is to solve a sorted version of an optimization problem compared to its non-sorted counterpart in terms of computational effort. This also highlights why ordered optimization deserves further analysis to design computationally efficient solution algorithms. Specifically, one can observe that, although the $p$-median problem is already NP-hard, it is less demanding than other variants of the DOMP that explicitly require sorting the allocation distances. Among these, the $p$-center problem appears to require less CPU time than other DOMPs for instances solved within the first few minutes, but it becomes considerably more challenging to close its MIP gap compared to the centdian or sum problems. 

In particular, we focus on identifying when the LP relaxation of the problem constitutes an efficient exact approach for solving the problem in polynomial time. Thus, in the following analysis, we examine the impact of the quality of the LP relaxation in terms of its tightness on the optimal solution process. Specifically, we relate the computational effort required by a branch-and-bound algorithm for solving a MILP, in terms of the number of nodes explored in the search tree, to the LP gap of the problem.

In Figure~\ref{fig:lpgap_nodes}, we present scatter plots of the number of explored nodes until the best solution is found versus the LP gap of the problem, for each of the different types of DOMP analyzed in our experiments. We also include the exponential fit of the data (red curve in the plots), which clearly shows that the number of explored nodes is exponentially affected by the quality of the LP relaxation. Thus, even if the problem cannot be solved exactly through its LP relaxation, the closer the LP bound is to the true MILP solution, the fewer nodes are required to solve the problem using a branch-and-bound approach. Consequently, the problem becomes less computationally demanding in terms of both memory and running time.

\begin{figure}[h!]
\centering
\adjustbox{width=0.8\textwidth}{\includegraphics{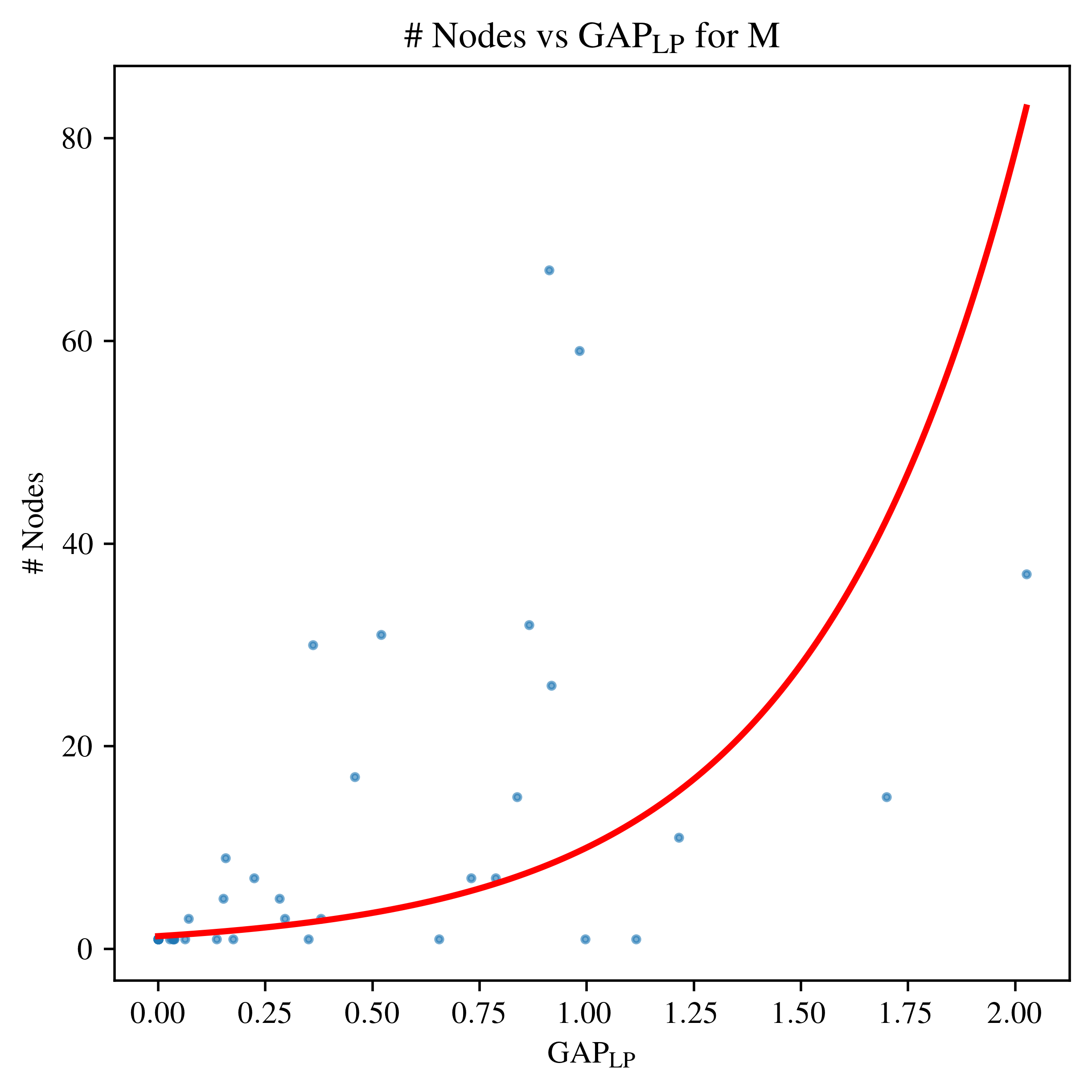}~\includegraphics{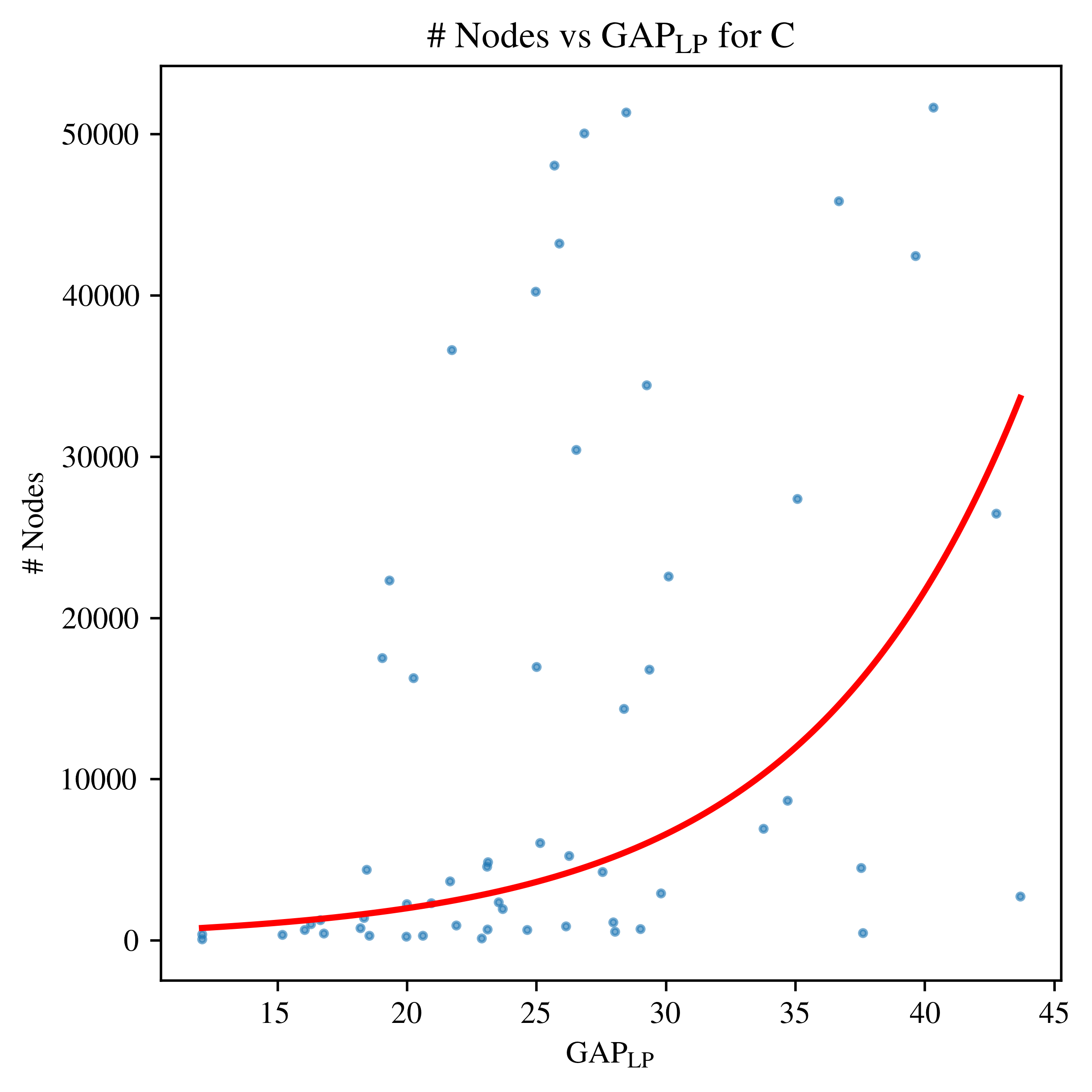}}\\
\adjustbox{width=0.8\textwidth}{\includegraphics{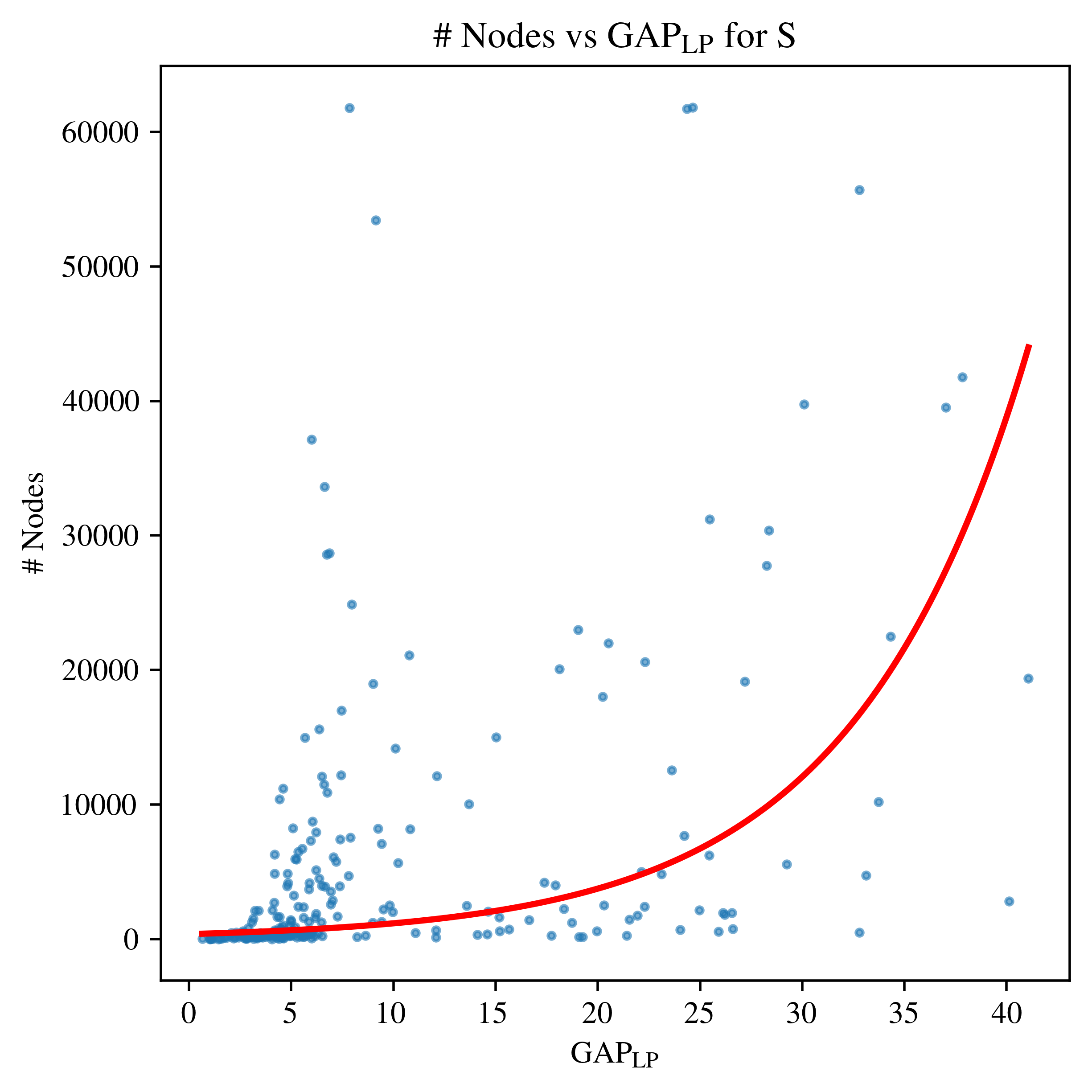}~\includegraphics{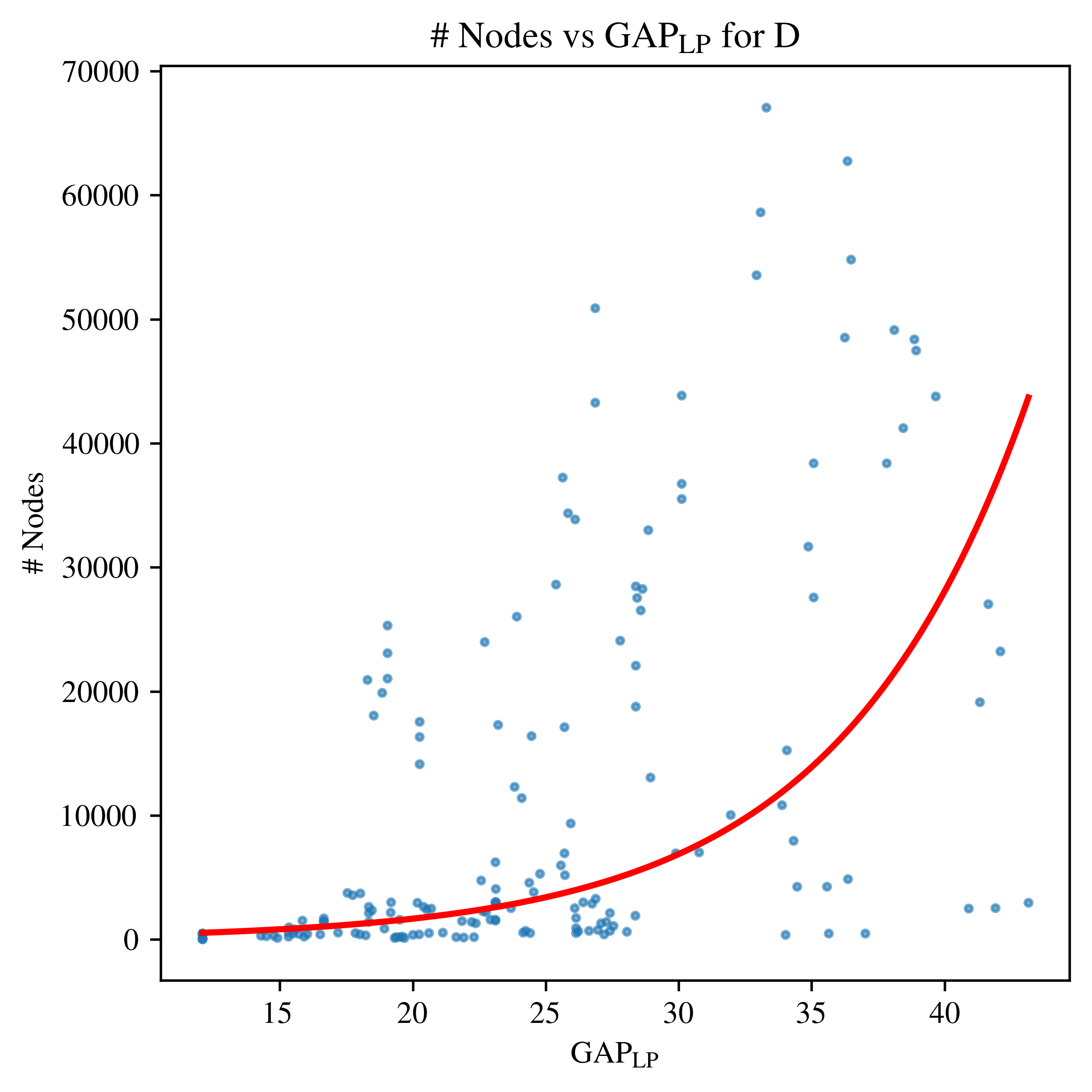}}
\caption{Scatter plots and exponential fits for the number of nodes with respect to the LP gap for each of the different types of DOMP  (from top left to bottom right: median, center, sum, and centdian). \label{fig:lpgap_nodes}}
\end{figure}

In order to detect what is influencing the computational demand required to solve the different types of problems, in what follows, we analyze the quality of the LP relaxation for all these instances. For each instance, we compute the LP gap (${\rm GAP}_{LP}$) as the relative deviation between the LP relaxation of the problem and the best solution obtained within the time limit. First, in Figure \ref{fig:lpgap_all}, we show for the four families of problems that we solved the LP gap performance profile, i.e., we represent the percent of instances that got an LP relaxation gap smaller than the threshold indicated in the $x$-axis. 

\begin{figure}[h!]
\centering
\adjustbox{width=0.8\textwidth}{\includegraphics{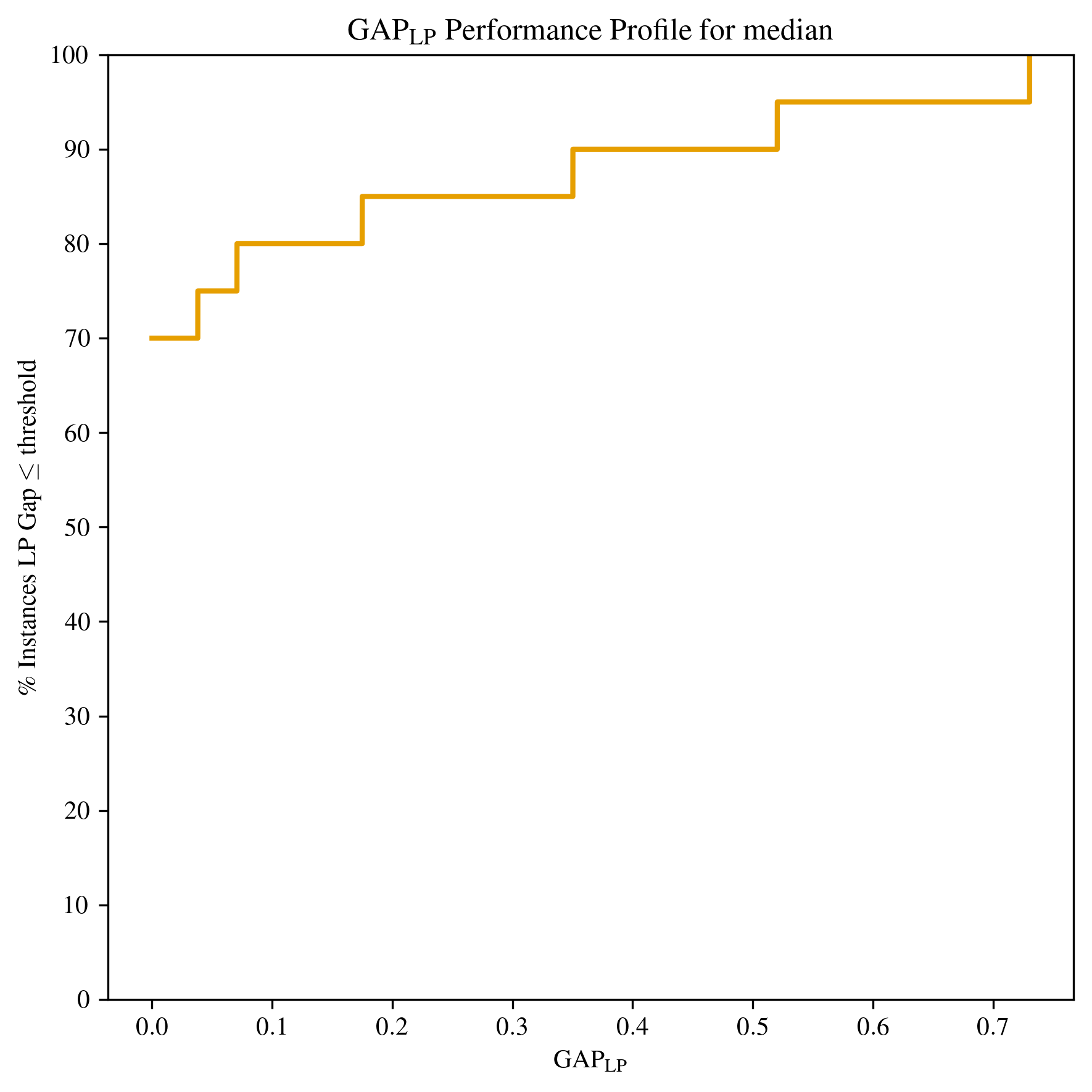}~\includegraphics{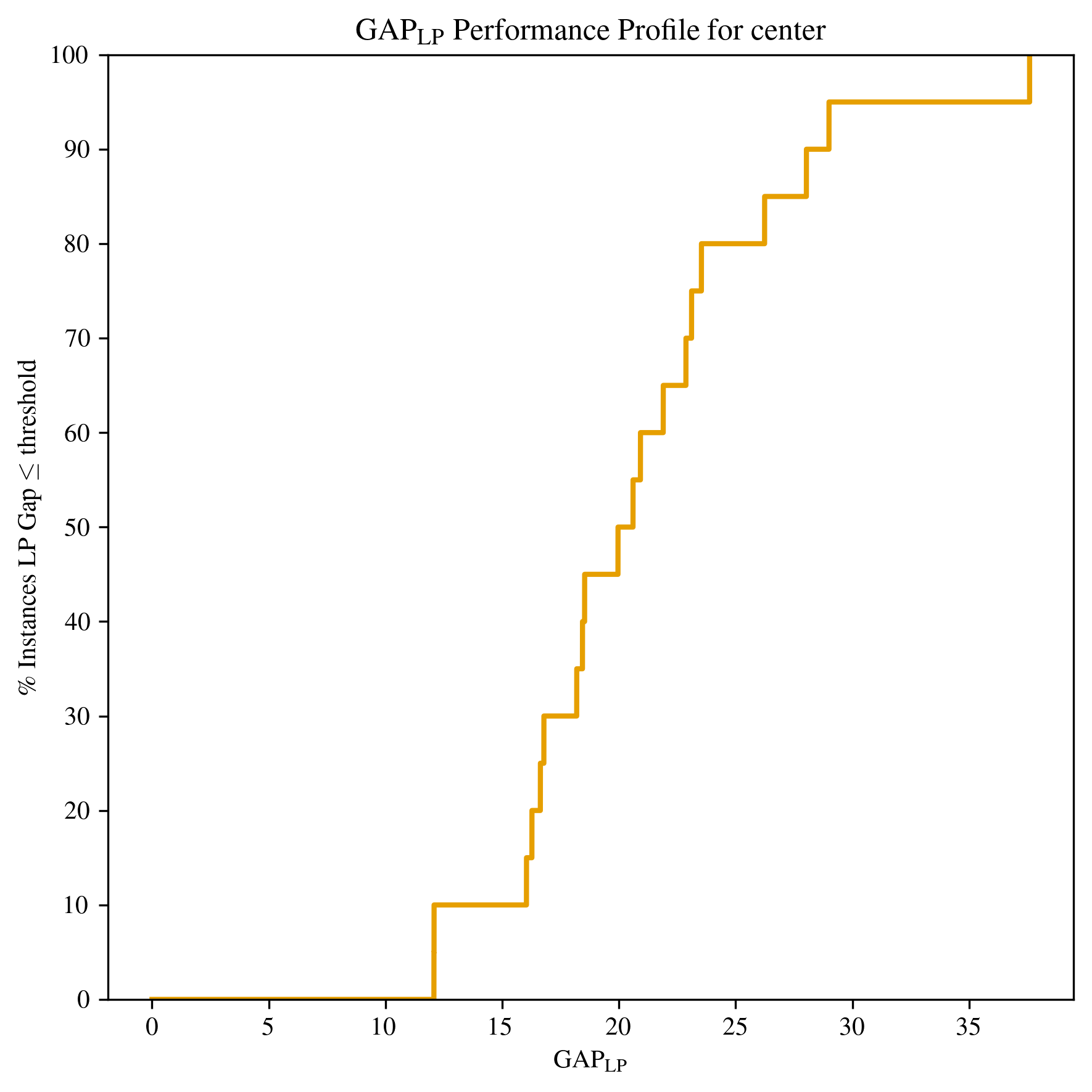}}\\
\adjustbox{width=0.8\textwidth}{\includegraphics{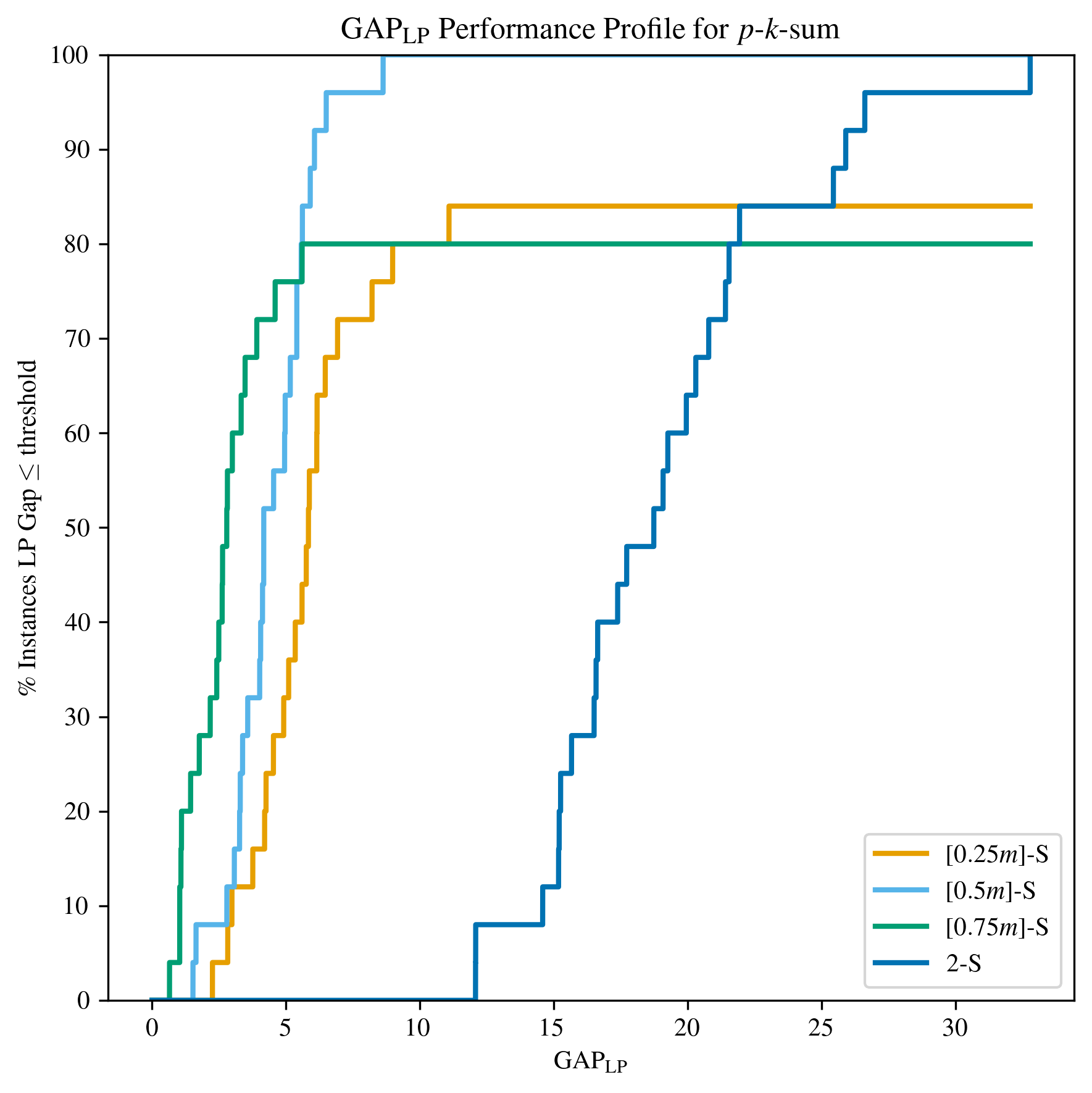}~\includegraphics{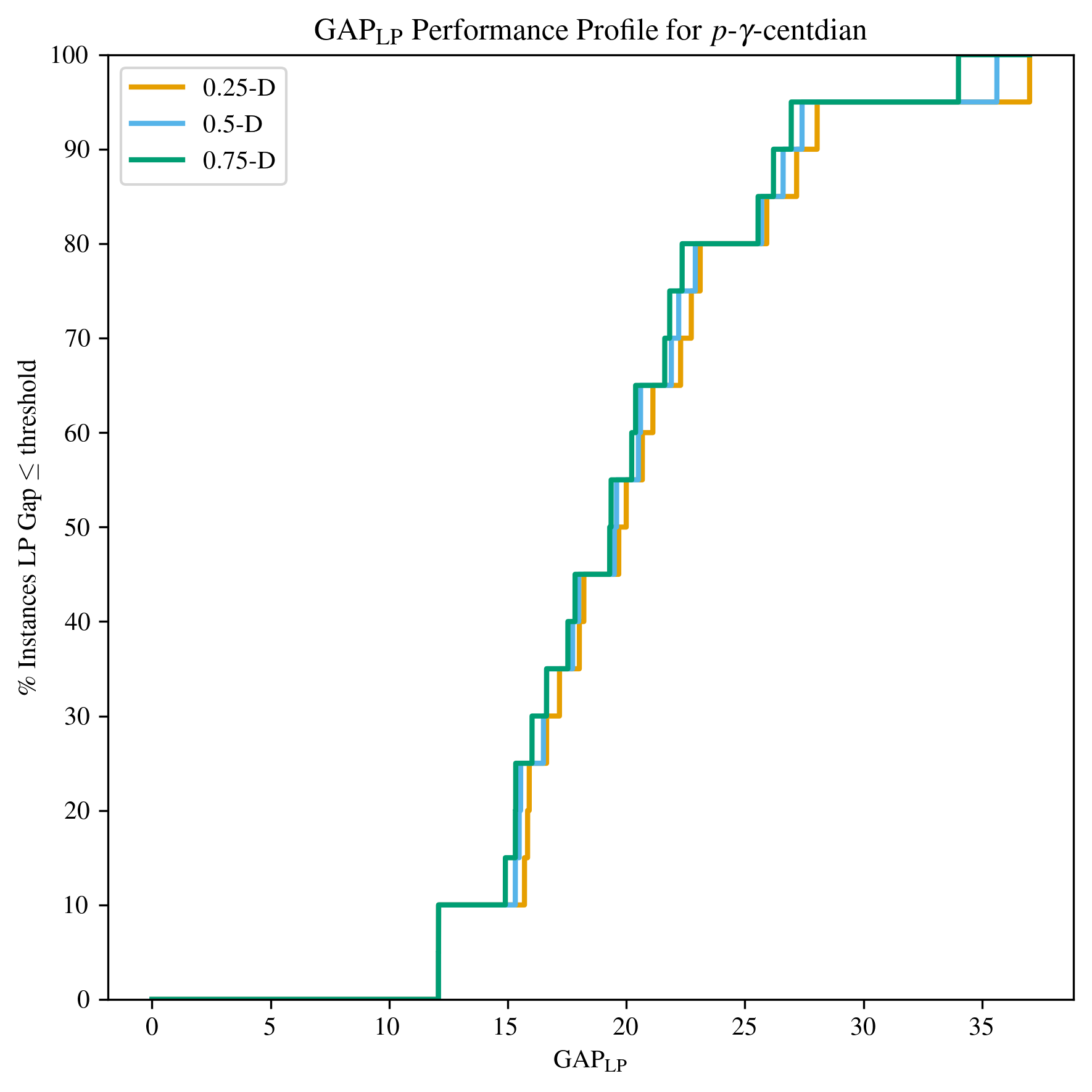}}\\
\caption{Performance profile for LP gaps for all the instances classified by type of DOMP (from top left to bottom right: median, center, sum, and centdian).\label{fig:lpgap_all}}
\end{figure}

One can observe from these figures that the LP relaxation for the $p$-median problem is very close to the MILP objective value, with LP gaps smaller than $0.7\%$ in more than $90\%$ of the instances. These small gaps explain the few nodes that were required to be explored in the branch-and-bound tree in the previous analysis. However, the situation changes dramatically for other versions of the DOMP where sorting is part of the decision process. Specifically, for all the other models, LP gaps larger than $30\%$ are obtained for some instances, especially for the $p$-center, the $2$-sum problem, and all centdian problems. For the sum problems, the larger the number of nonzeros in the $\boldsymbol{\lambda}$-weights, the better the quality of the LP relaxation seems to be. That is, the closer the problem is to the $p$-median problem, the tighter the LP relaxation becomes. 

Note that the range of the values in the $x$-axes of the plots in Figure \ref{fig:lpgap_all} is very different for the types of DOMP analyzed in this study, ranging over $[0\%,0.7\%]$ for the $p$-median problem, but over $[0\%,40\%]$ for the other problems. In order to provide a proper comparison of these LP relaxation gaps, in  Figure~\ref{fig:boxplot_lpgaps} we represent the boxplots of the LP gaps of all instances are represented by type of DOMP, where one can easily check the quality of these lower bounds for the problems in terms of the DOMP.

\begin{figure}[h!]
\centering
\includegraphics[width=0.9\textwidth]{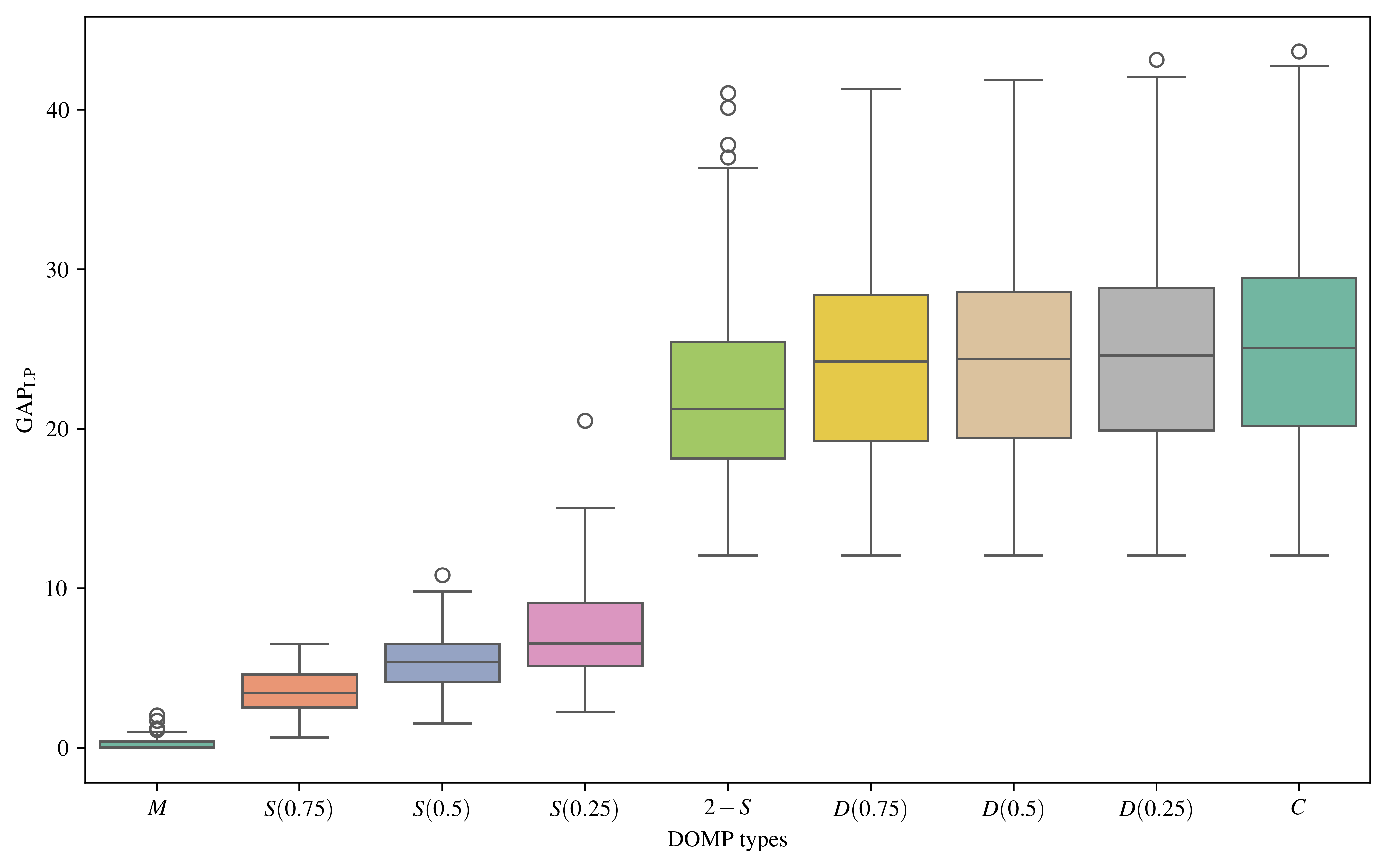}
\caption{Boxplots of LP gaps for all instances by type of DOMP. \label{fig:boxplot_lpgaps}}
\end{figure}

In Table~\ref{tab:lpgaps}, we report the percentage of instances of each DOMP type whose LP relaxation gap is smaller than or equal to $0\%$, $2\%$, $5\%$, and $10\%$. While in the $p$-median problem all instances achieved an LP gap smaller than $5\%$, for many of the other problems (such as the center, centdian, or $2$-sum problems), the gaps never fall below $10\%$. For the sum problems, the LP relaxation of the $75\%$-sum problem is tighter than that of the $50\%$-sum, and the latter, in turn, is better than that obtained for the $25\%$-sum problem.

\begin{table}[h!]
\centering
\adjustbox{width=0.7\textwidth}{
\begin{tabular}{c c c c c}
\toprule
\textbf{DOMP} & \textbf{GAP$_{\rm LP}$(0\%)}& \textbf{GAP$_{\rm LP}$(2\%)}& \textbf{GAP$_{\rm LP}$(5\%)} & \textbf{GAP$_{\rm LP}$(10\%)}\\
\midrule
{\rm M} & $56.99\%$ & $97.85\%$ & $100.00\%$ & $100.00\%$ \\
{\rm C} & $0.00\%$ & $0.00\%$ & $0.00\%$ & $0.00\%$ \\
$\lceil0.25m\rceil$-{\rm S} & $0.00\%$ & $0.00\%$ & $14.94\%$ & $71.26\%$ \\
$\lceil0.5m\rceil$-{\rm S} & $0.00\%$ & $1.94\%$ & $38.83\%$ & $93.20\%$ \\
$\lceil0.75m\rceil$-{\rm S} & $0.00\%$ & $17.86\%$ & $76.19\%$ & $100.00\%$ \\
$2$-{\rm S} & $0.00\%$ & $0.00\%$ & $0.00\%$ & $0.00\%$\\
$0.25$-{\rm D} & $0.00\%$ & $0.00\%$ & $0.00\%$ & $0.00\%$ \\
$0.5$-{\rm D} & $0.00\%$ &$ 0.00\%$ & $0.00\%$ & $0.00\%$ \\
$0.75$-{\rm D} & $0.00\%$ & $0.00\%$ & $0.00\%$ & $0.00\%$ \\
\bottomrule
\end{tabular}}
\caption{Distribution of instances by LP gap smaller or equal than $0\%$, $2\%$, $5\%$, and $10\%$ for each of the DOMP problems solved in our experiments.\label{tab:lpgaps}}
\end{table}
Thus, the theoretical results proved in this paper are empirically supported by this study. In particular, none of the instances of problems different from the $p$-median recovered the MILP solution through its LP relaxation. 

Summarizing this preliminary study, we observe a close relationship between the quality of the LP relaxation and the computational effort required to solve the problem. Additionally, some problems, when solved under the same set of instances, exhibit a consistent behavior inherent to the nature of the $\boldsymbol{\lambda}$-weights defining each DOMP variant. 

\subsection*{LP Relaxations and Clusterability}

As already mentioned, prior studies have analyzed different data generation models (such as the stochastic ball model or the Gaussian mixture model), in which further results can be obtained about the tightening of the LP relaxations in non-ordered optimization. For example, $p$-median problems when the demand points are geometrically distributed around $p$ Euclidean balls whose centers are separated enough. Furthermore, in the clustering literature, it is also well-known that NP-hard clustering algorithms turn into ``easy" problems when instances are clusterizable.

In this part of our computational study, we investigate whether the \emph{clusterability} of the input points has an impact on the quality of the LP relaxation of the different DOMP problems. 

To this end, there are several methodologies designed to detect whether a dataset is clusterizable or not. Among them, we consider Hartigan's dip test~\citep{hartigan1985dip}, whose ability to detect clusterability properties of a dataset has been broadly recognized in literature. It is implemented in \texttt{R} through the library \texttt{clusterabilityR}. We briefly explain the idea behind this test below.

Given an instance for a DOMP, represented through its distance matrix, $\D$, the clusterability test functions that we apply provide a statistical framework to evaluate the presence of cluster structure by analyzing the empirical distribution of the off-diagonal elements in the matrix. The clue is that in datasets containing well-separated groups, the distances tend to be bimodal or multimodal: small intra-cluster distances coexist with large inter-cluster distances. Conversely, a unimodal distance distribution suggests the absence of distinct clusters.

The Hartigan's dip test quantifies the maximum deviation between the \emph{empirical cumulative distribution function} (ECDF) of the distances and the ECDF of the closest unimodal distribution. The resulting \emph{dip statistic} takes values in $[0,1]$, where values near zero point out approximate unimodality and larger values indicate multimodality. 
  The associated $p$-value expresses the probability of observing a 
  dip as extreme under the null hypothesis of unimodality.
  Hence, a small $p$-value suggests that the 
  distance distribution is multimodal, implying that the dataset is 
  likely to contain distinct clusters. A large $p$-value indicates 
  that no statistically significant clustering tendency is detected. To classify a distance matrix as low or highly clusterizable, we use the $5\%$ and $95\%$ quantiles of the two dip measures. Specifically, if an instance has a dip statistic (resp. dip $p$-value) smaller (resp. larger) than the $5\%$ quantile of the corresponding measures in the instance set, it is classified as highly clusterizable. Conversely, if the $95\%$ quantile is used (in the opposite direction), the instance is classified as lowly clusterizable.
  
  For large distance matrices, clusterability was assessed on the one-dimensional classical MDS projection of the data, which retains the main geometric information while providing an interpretable and statistically stable input for unimodality tests.

First, in Figure \ref{fig:pp_dip_all}, we represent the boxplots of the LP relaxation of all solved problems, but classified by level of clusterability (low and high) based on the two metrics that we used, the dip statistic (left) and the dip $p$-value (right).

\begin{figure}[h!]
\centering
\adjustbox{width=0.8\textwidth}{\includegraphics[width=0.5\textwidth]{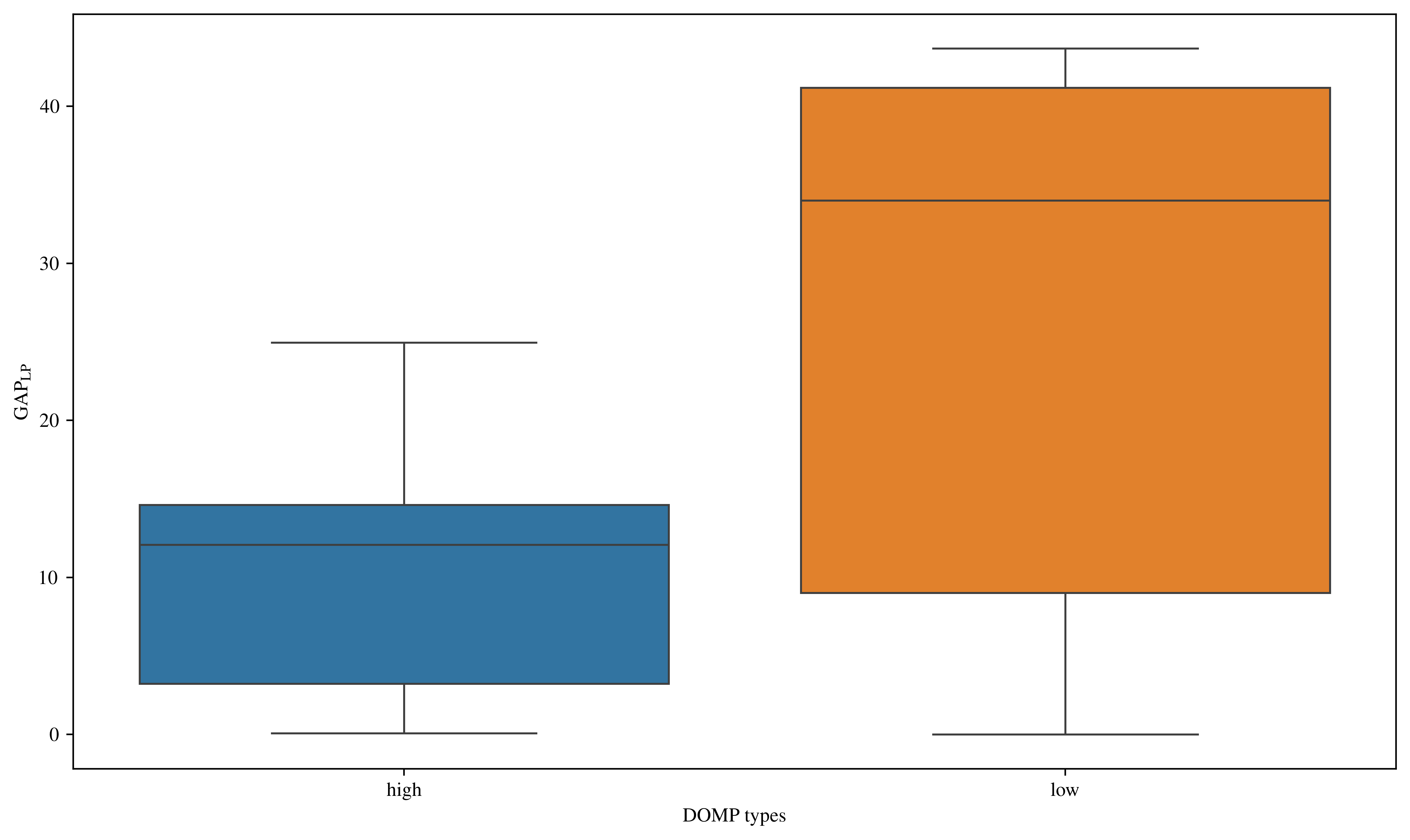}~\includegraphics[width=0.5\textwidth]{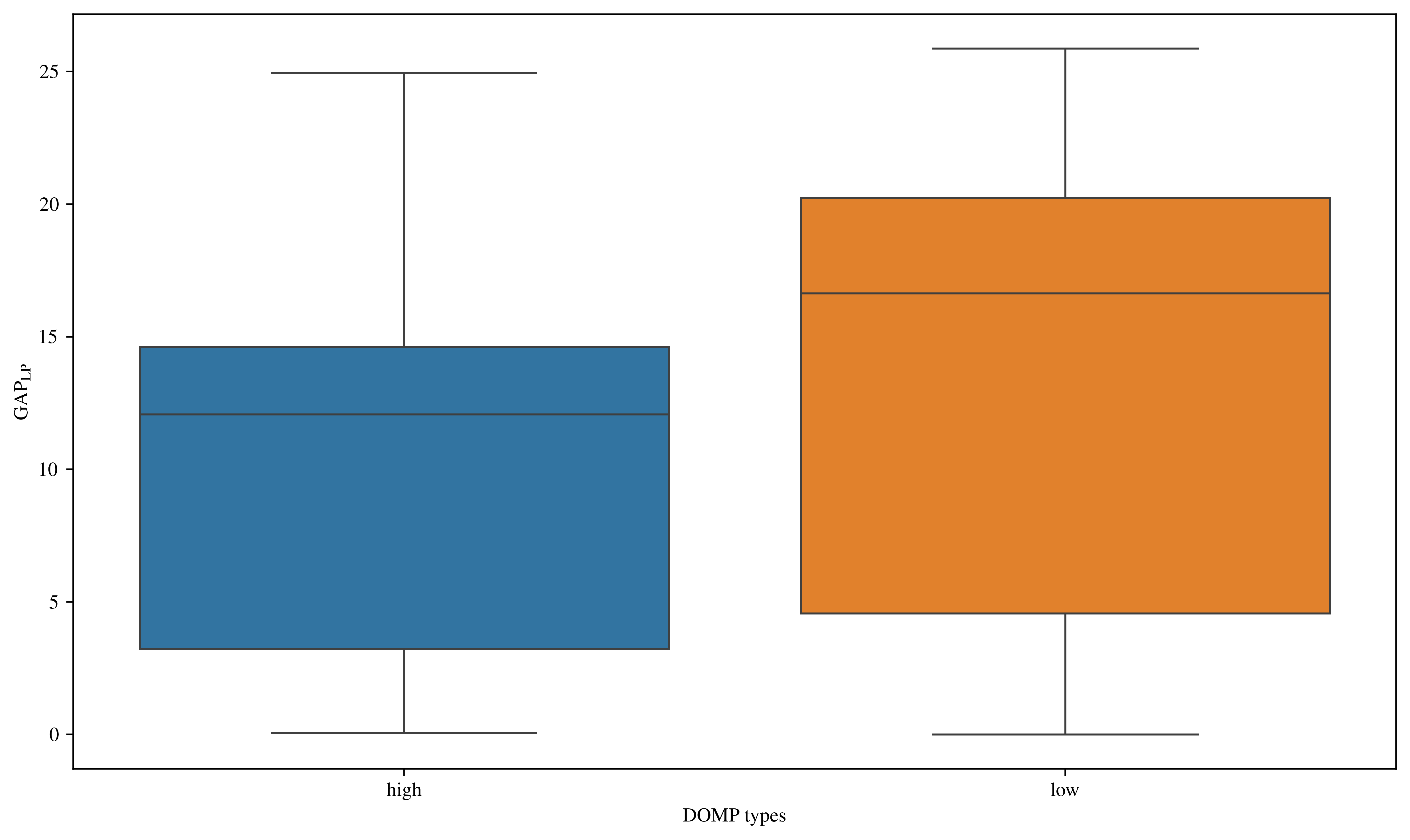}}
\caption{Boxplots of the LP gaps for all the instances classified as either low (orange) or high (blue) clusterability based on the dip statistic (left) and the dip $p$-value (right).\label{fig:pp_dip_all}}
\end{figure}

In general, one can observe, for both measures, that the LP relaxations of the instances that are highly clusterizable according to the dip test perform better than those that are poorly clusterizable, which is consistent with previous studies on the topic. However, when analyzing the results in more detail for each type of DOMP, a differentiated behavior can be observed.

In Figures~\ref{fig:dipstat_problems} and~\ref{fig:dipp_problems}, we present the boxplots for the different problems, according to the clusterability measures, the dip statistic, and the dip $p$-value, respectively. Above each upper whisker, we report the median value of the LP gap. One can observe from the plots that, except for the $p$-median problem, the LP relaxation never recovers the MILP solution, which confirms that sorting is a challenging feature in facility location. 

Moreover, there is a remarkable difference between the quality of the LP gaps when the instances are highly versus poorly clusterizable. In those instances that are suitable for clustering, the LP relaxation is notably closer to the best MILP solution obtained when solving the model. This empirical finding opens a new line of research in facility location that has not been explored before: the impact of the geometric clusterability of the set of demand points on the solvability of $p$-facility location problems. 

It is clear from our results that, in the pursuit of an LP-relaxation-based oracle for ordered median problems, both theoretical and applied insights can be further developed under specific geometric assumptions about the distribution of demand points. Although such studies have been carried out for the $k$-means and $k$-median clustering algorithms, the effect of assuming these geometric properties in ordered problems appears to be even more significant. Indeed, from the boxplots one can see that the medians of the LP gaps differ, in some cases (such as the centdian and center problems), by more than $13\%$.

\begin{figure}[h!]
\centering
\includegraphics[width=0.9\textwidth]{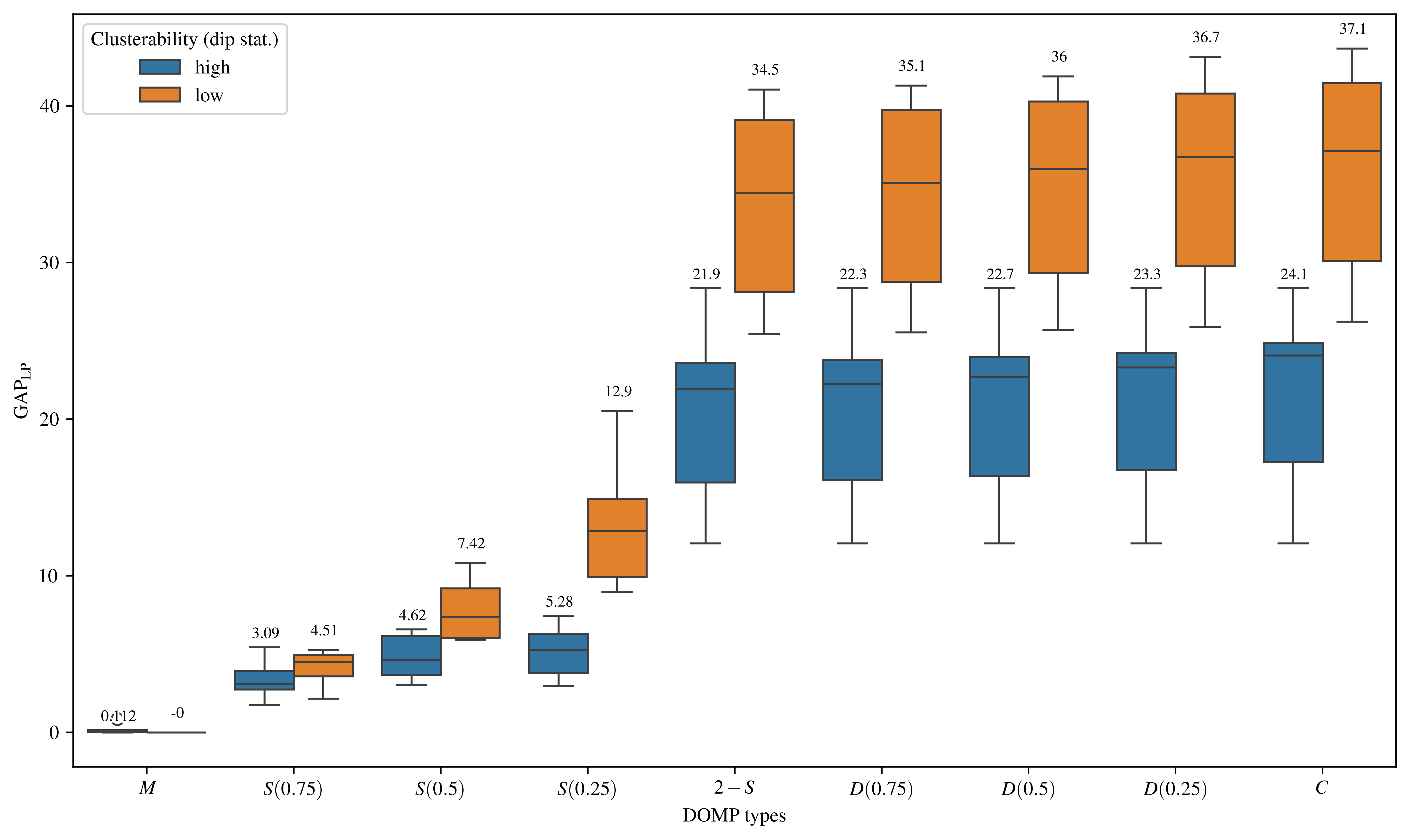}
\caption{Boxplots of LP gaps for all the instances classified as either low (orange) or high (blue) clusterability based on the dip statistic for each type of DOMP. Median values are displayed above the corresponding upper whiskers. \label{fig:dipstat_problems}}
\end{figure}

\begin{figure}[h!]
\centering
\includegraphics[width=0.9\textwidth]{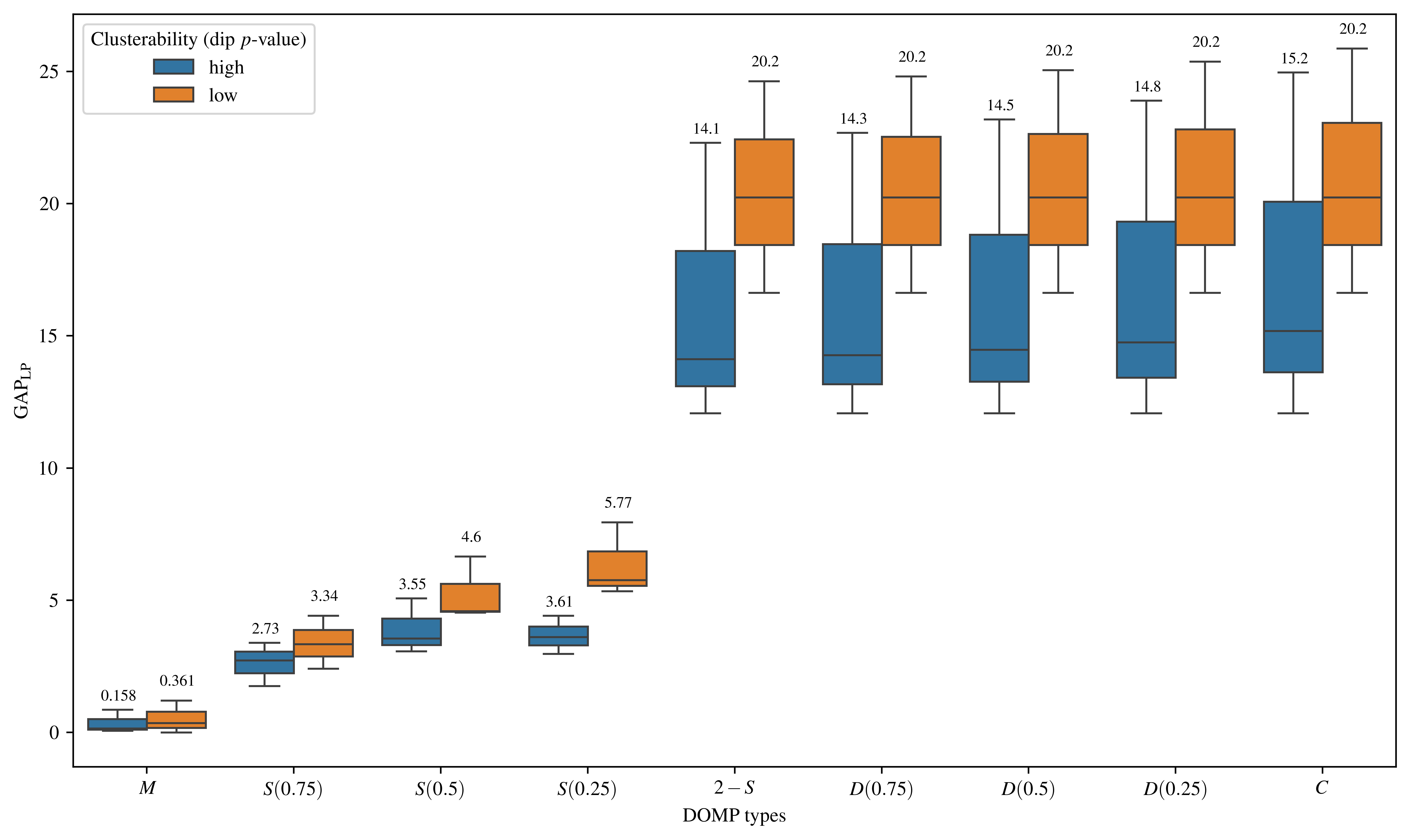}
\caption{Boxplots of LP gaps for all the instances classified as either low (orange) or high (blue) clusterability based on the dip $p$-value for each type of DOMP. Median values are displayed above the corresponding upper whiskers.\label{fig:dipp_problems}}
\end{figure}

\section{Conclusion}\label{sec:conclusion}

In this paper, we have studied, for the first time, the quality of the LP relaxations of ordered location problems, a broad family of models that generalizes the classical $p$-median problem. We derived a novel primal–dual characterization that explains the ability (or failure) of the LP relaxation to recover the integral solution of the problem. We then analyzed several relevant cases within the framework of ordered optimization, namely: the $p$-median problem, the $p$-center problem, the $p$-$\gamma$-centdian problems, and the $p$-$k$-sum problems, concluding that the \emph{favorable} performance of the LP relaxation in the $p$-median problem does not extend to genuinely ordered problems. In particular, we proved that the LP relaxation of the $p$-center problem never recovers integrality, and we provided sufficient non-recovery conditions for the remaining cases.

Finally, we reported an extensive set of computational experiments to analyze the empirical behavior of the LP relaxation on benchmark instances from ORLIB. On the one hand, our computational study confirms that sorting is a challenging feature in mathematical optimization, as clear performance differences arise between the classical $p$-median problem and its ordered counterparts. We also quantified the impact of the LP relaxation on the overall computational effort required to solve each problem. On the other hand, we investigated the influence of geometric properties of the input data, specifically, their degree of clusterability, on the quality of the LP relaxation. We found that this property has a pronounced effect: instances exhibiting high clusterability yield LP relaxations much closer to the optimal integer solutions. This empirical evidence extends and complements existing theoretical results for (non-ordered) optimization-based clustering algorithms.

\noindent{\bf Future Research}

The results obtained in this work not only contribute novel insights into ordered optimization but also open new research directions in location science. First, our findings confirm that current formulations for ordered problems are weakly tightened, and then, further effort is needed to derive valid inequalities or alternative formulations that yield convex relaxations with stronger recovery properties, potentially enabling efficient (polynomial-time) solution methods for problems that are NP-hard in general. Second, we introduced, for the first time, the notion of clusterability in locational analysis and demonstrated, empirically, its implications for the numerical solvability of these problems. A promising avenue for future research is the design of algorithmic strategies that exploit this property, for example, by discarding a subset of points that do not satisfy desirable geometric conditions, solving the simplified problem, and subsequently analyzing the error bounds of such an approximation. Besides, clusterability is a realistic assumption in practical locational settings, where facilities are meant to serve spatially grouped users, since  otherwise, the facility locations obtained would be of limited practical relevance.

A further extension of this research is the analysis of continuous location problems, where facilities can be placed anywhere in the Euclidean space rather than on a predefined discrete set. Although mixed-integer $p$-order cone formulations have been proposed for these models, their convex relaxations are typically weak due to the presence of big-$M$ constraints. Consequently, reformulation techniques and tighter convex relaxations are needed. Advancing this direction would not only strengthen the theoretical foundations of location science but also have direct implications for optimization-based clustering and related fields.

\section*{Acknowledgements}

The authors acknowledge financial support by grants PID2020-114594GB-C21, PID2024-156594NB-C21, and RED2022-134149-T (Thematic Network on Location Science and Related Problems) funded by MICIU/AEI/10.13039/501100011033; FEDER + Junta de Andaluc\'ia project C‐EXP‐139‐UGR23; and the IMAG-Mar\'ia de Maeztu grant CEX2020-001105-M/AEI /10.13039/501100011033.


\end{document}